\title[Hilbert Expansion of the R-VMB system]{Global Hilbert expansion for the relativistic Vlasov-Maxwell-Boltzmann system}
\author[Y. Guo]{Yan Guo}
\address[Y. Guo]{{\newline Division of Applied Mathematics, Brown University
Providence, RI 02912, USA}}
\email{yan\_guo@brown.edu}
\author[Q. H. Xiao]{Qinghua Xiao}
\address[Qinghua Xiao]{\newline Innovation Academy for Precision Measurement Science and Technology,\newline Chinese Academy of Sciences, Wuhan 430071, China}
\email{xiaoqh@apm.ac.cn}
\newtheorem{theorem}{Theorem}[section]
\newtheorem{lemma}{Lemma}[section]
\newtheorem{corollary}{Corollary}[section]
\newtheorem{proposition}{Proposition}[section]
\newtheorem{remark}{Remark}[section]
\def\charf {\mbox{{\text 1}\kern-.30em {\text l}}}
\begin{document}

\date{\today}

\subjclass{76Y05; 35Q20} \keywords{Hilbert expansion; Relativistic Vlasov-Maxwell-Boltzmann system; $L^2-L^{\infty}$ framework}

\thanks{\textbf{Acknowledgment.} The work of Yan Guo is supported by NSF grant DMS-1810868, the work of Qinghua Xiao is supported by grants from Youth Innovation Promotion Association, CAS (No. 2017379), the National Natural Science Foundation of China under contract 11871469, the State Scholarship Fund of China and Hubei Chenguang Talented Youth Development Foundation.}


\begin{abstract} Consider the relativistic Vlasov-Maxwell-Boltzmann system describing the dynamics of an electron gas in the presence of a fixed ion background. Thanks to recent works \cite{Germain-Masmoudi-ASENS-2014, Guo-Ionescu-Pausader-JMP-2014} and \cite{Deng-Ionescu-Pausader-ARMA-2017}, we establish the global-in-time validity of its Hilbert expansion and derive the limiting relativistic Euler-Maxwell system as the mean free path goes to zero. Our method is based on the $L^2-L^{\infty}$ framework and the Glassey-Strauss Representation of the electromagnetic field,  with auxiliary $H^1$ estimates and $W^{1,\infty}$ estimates to control the characteristic curves and corresponding $L^{\infty}$ norm.
\end{abstract}

\maketitle
\thispagestyle{empty}

\tableofcontents

\section{Introduction}

\setcounter{equation}{0}
\subsection{Relativistic Vlasov-Maxwell-Boltzmann system}
The relativistic Vlasov-Maxwell-Boltzmann system is a fundamental and complete model describing the dynamics of a dilute collisional plasma appearing in  nuclear fusion and the interior of stars, etc. Correspondingly, the relativistic Euler-Maxwell system, the foundation of the two-fluid theory in plasma physics, describes the dynamics of two compressible ion and electron fluids interacting with their own self-consistent electromagnetic field.  It is also the origin of many celebrated dispersive PDE such as NLS, KP, KdV, Zaharov, etc, as various scaling limits and approximations of such a fundamental model. Since the ion mass is far larger than the electron mass in a plasma, the dynamics of ions is negligible for simplification sometimes. In this special case, the plasma can be approximately described by the one-species relativistic Vlasov-Maxwell-Boltzmann system in the mesoscopic level and treated as a single fluid in the macroscopic level. It has been an important open question if the general  Euler-Maxwell system can be derived rigorously from its kinetic counter-part, the Vlasov-Maxwell-Boltzmann system, as the mean field path goes to zero.

In this paper, we are able to answer this question in the affirmative in the special case when the ions form a constant background, and the relativistic Euler-Maxwell system describes the dynamics of the electron gas. The relativistic Vlasov-Boltzmann system can be written as:
\begin{equation}\label{main1}
\begin{split}
& \partial_t F^{\varepsilon} + c\hat{p}\cdot \nabla_x F^{\varepsilon}-e_- \Big(E^{\epsilon}+\hat{p}\times B^{\varepsilon} \Big)\cdot\nabla_p F^{\varepsilon} = \frac{1}{\varepsilon}Q(F^{\varepsilon},F^{\varepsilon}),
 \end{split}
\end{equation}
which is coupled with the Maxwell system
\begin{align}\label{main2}
\begin{aligned}
& \partial_tE^{\varepsilon}-  c\nabla_x \times B^{\varepsilon} =4\pi e_-\int_{\mathbb R^3} \hat{p}F^{\varepsilon} dp, \\
 &\partial_tB^{\varepsilon}+ c\nabla_x \times E^{\varepsilon}=0,\\
& \nabla_x\cdot E^{\varepsilon}=4\pi e_-\Big(\bar{n} -\int_{\mathbb R^3}  F^{\varepsilon} dp\Big), \\
& \nabla_x\cdot B^{\varepsilon}=0.
\end{aligned}
\end{align}
Here $\varepsilon$ is the  Knudsen
number (the mean free path), $F^{\varepsilon}= F^{\varepsilon}(t,x,p)$ is the number density function for electrons at time $t\geq0$, position $x=(x_1,x_2,x_3) \in \mathbb R^3$ and momentum $p=(p_1,p_2,p_3)\in \mathbb R^3$. $p^0=\sqrt{m^2c^2 + |p|^2}$ is the energy of an electron and $\hat{p}=\frac{p}{p^0} $. The constants $-e_-$ and $m$ are the magnitude of the electrons' charge and rest mass, respectively. $c$ is the speed of light, and $E(t,x), B(t,x)$ are the electromagnetic fields.

 Corresponding to \eqref{main1}-\eqref{main2}, at the
hydrodynamic level, the electron gas obeys the relativistic Euler-Maxwell system, which is an
important 1-fluid model for a plasma:
\begin{equation}
\begin{cases} \label{rem}
 \frac{1}{c}\partial_t(n u^0) + \nabla_x\cdot(n u) =0,\vspace{0.5ex}\\
 \frac{1}{c}\partial_t[(e+P)u^0u] + \nabla_x\cdot[(e+P)u\otimes u]+c^2\nabla_xP+ce_-n[u^0E+u\times B]=0, \vspace{0.5ex}\\
\partial_tE-c\nabla_x\times B =4\pi e_- \frac{nu}{c},\vspace{0.5ex}\\
\partial_tB+c\nabla_x\times E =0,\vspace{0.5ex}\\
\nabla_x\cdot E=4\pi e_-(\bar{n}-\frac{1}{c}nu^0),\vspace{0.5ex}\\
\nabla_x\cdot B=0,
\end{cases}
\end{equation}
where $n$ is the particle number density, $u=(u_1, u_2, u_3)$, $u^0=\sqrt{|u|^2+c^2} $, $P$ is the pressure,
$e$  is the total energy which is the sum of internal energy  and the energy in the rest frame.

 The purpose of
this article is to rigorously prove that solutions of the relativistic Vlasov-Maxwell-Boltzmann system
\eqref{main1}-\eqref{main2} converge to solutions of the relativistic Euler-Maxwell system \eqref{rem} globally in time, as the Knudsen
number $\varepsilon$ tends to zero:

\begin{equation}\label{limit}
\lim_{\varepsilon\rightarrow0}\sup_{0\leq t\leq \frac{1}{2}|\ln\varepsilon|^{\frac{1}{3}}}\left(\left\|(F^{\varepsilon}-\mathbf{M})(t)\right\|_{H^1}+\|(E^{\varepsilon}-E_0)(t)\|_{H^1}
+\|(B^{\varepsilon}-B_0)(t)\|_{H^1}\right) = 0.
\end{equation}
Namely, the solution $(F^{\varepsilon}, E^{\varepsilon}, B^{\varepsilon})$ to the relativistic Vlasov-Maxwell-Boltzmann system
converges to $(\mathbf{M}, E_0, B_0)$  in the $H^1$ norm. The macroscopic variables $n_0, u, T_0$ of the Maxwellian $\mathbf{M}$ \eqref{maxwell}, and $E_0, B_0$ satisfy the relativistic Euler-Maxwell system \eqref{rem}.

Our contribution is a step forward to derive two-fluid models for describing a plasma from kinetic theory. On the other hand, due to complexity of different scalings, such a derivation for a general two-fluid model with both ions and electrons remains a major open problem.

Now we briefly explain the strategy of our proof. Detailed explanations will be followed after the statement of Theorem \ref{result}.
In fact, the main part of our proof is constructing solutions to equations of remainder terms $F_R^{\varepsilon}, E_R^{\varepsilon}$ and $B_R^{\varepsilon}$ in the Hilbert expansion \eqref{expan} for the relativistic Vlasov-Maxwell-Boltzmann system in a $H^1-W^{1,\infty}$ framework. First, it is simple to get the $L^2$ estimate of the remainder terms. Under a priori assumptions \eqref{assump} for the $W^{1,\infty}$ estimates of the remainder terms, we can proceed characteristics estimates and further obtain the $L^{\infty}$ estimate of $F_R^{\varepsilon}$ to close the $L^2-L^{\infty}$ energy estimates. To justify the assumptions \eqref{assump}, the essential and delicate part is the  $W^{1,\infty}$ estimates of the electromagnetic fields $F_R^{\varepsilon}, E_R^{\varepsilon}$ via the Glassey-Strauss Representation. We succeed to bound the  $W^{1,\infty}$ estimates of the electromagnetic fields through the $W^{1,\infty}$  estimates of $F_R^{\varepsilon}$ properly. Then we modify the corresponding proof in \cite{Guo-Jang-CMP-2010} and bound the $W^{1,\infty}$ norm of $F_R^{\varepsilon}$ with its $H^1$ norm. Finally, we derive the $H^1$ norm estimates of the remainder terms to close the energy estimates and verify the assumptions \eqref{assump} for $t\in[0, \frac{1}{2}|\ln\varepsilon|^{\frac{1}{3}}]$.

The relativistic Boltzmann collision operator $Q(\cdot, \cdot)$ in (\ref{main1}) takes the form of
\begin{equation} \label{collision1}
 Q(F,G)=\frac{c}{p^0}\int_{\mathbb R^3}\frac{dq}{q^0}\int_{\mathbb R^3}\frac{dq'}{q'^{0}}\int_{\mathbb R^3}\frac{dp'}{p'^{0}}W[F(p')G(q')-F(p)G(q)].
  \end{equation}
Here the ``transition rate" $W=W(p,q|p',q')$ is defined as
\begin{equation} \label{transition}
W=s\sigma(g, \theta)\delta(p^0+q^0-p'^{0}-q'^{0}) \delta^{(3)}(p+q-p'-q').
  \end{equation}
  In a pioneering work of Glassey and Strauss \cite{Glassey-Strauss-PRIMS-1993}, the collision operator $Q$ in \eqref{collision1} was represented as follows:
\begin{equation*} \label{collision3}
\begin{split}
 Q(F,G)&=\int_{\mathbb R^3}dq\int_{\mathbb S^2}d\omega \frac{s\sigma(g,\theta)}{p^0q^0}\mathbb B(p,q,\omega)[F(p')G(q')-F(p)G(q)],
\end{split}
  \end{equation*}
 where the kernel $\mathbb B(p,q,\omega)$ is
\begin{equation*} \label{kernel}
\mathbb B(p,q,\omega)=\frac{(p^0+q^0)^2|\omega\cdot(p^0q-q^0p)|}{[(p^0+q^0)^2-(\omega\cdot(p+q))^2]^2}.
  \end{equation*}
Denote the four-momentums $p^{\mu} = (p^0, p)$ and $q^{\mu} = (q^0, q)$.  We use the Einstein convention that repeated up-down indices are
summed, and we raise and lower indices using the Minkowski metric $g_{\mu\nu}
\overset{\text{def}}{=}
diag(-1, 1, 1, 1)$. The Lorentz inner product is then given by
\begin{equation*}
p^{\mu}q_{\mu}\overset{\text{def}}{=}-p^0q^0 +
\sum_{i=1}^3 p_iq_i .
\end{equation*}
 The quantity $s$ in (\ref{transition}) is the square of the energy in the ``center of momentum", $p+q=0$, and is given as
$$s=s(p,q)=-(p^{\mu}+q^{\mu})(p_{\mu}+q_{\mu})=2(p^0q^0-p\cdot q+m^2c^2)\geq4m^2c^2.$$
And the relativistic momentum $g$ in (\ref{transition}) is denoted as
$$g=g(p,q)=\sqrt{-(p^{\mu}-q^{\mu})(p_{\mu}-q_{\mu})}=\sqrt{2(p^0q^0-p\cdot q-m^2c^2)}\geq0.$$

The condition for elastic collision is then given by
\begin{equation} \label{conservations}
p^0+q^0=p'^{0}+q'^{0},\qquad p+q=p'+q',
  \end{equation}
where $p'$ and $q'$ are the post collisional momentums given as
\begin{align*} \label{p-momentum2}
& p'=p+a(p,q,\omega)\omega,\qquad q'=q-a(p,q,\omega)\omega,\\
&a(p,q,\omega)=\frac{2(p^{0}+q^{0})[\omega\cdot(p^0q-q^0p)]}{(p^0+q^0)^2-(\omega\cdot(p+q))^2}.
  \end{align*}
 The Jacobian for the transformation
$(p, q) \rightarrow (p', q')$ in these variables \cite{Glassey-Strauss-TTSP-1991} is
\begin{equation*} \label{Jacobian1}
 \frac{\partial(p', q')}{\partial(p,q)}=-\frac{p'^{0} q'^{0}}{p^0q^0}.
  \end{equation*}
The relativistic differential cross section $\sigma(g,\theta)$ measures the interactions between particles. See \cite{Dudynske-Ekiel-Maria-CMP-1988,Dudynske-Ekiel-Maria-JTP-2007} for a physical discussion of general assumptions. We use the following hypothesis.\newline
{\it Hypothesis on the collision kernel.} We consider the ``hard ball" condition
\begin{equation*} \label{cross-section}
\sigma(g,\theta)=\mbox{constant}.
 \end{equation*}
This condition is used throughout the rest of the article. In fact,
without loss of generality,  we will use the normalized condition
$\sigma(g,\theta)=1$ for simplicity. The Newtonian limit, as $c\rightarrow\infty$, in this situation is the Newtonian
hard-sphere Boltzmann collision operator \cite{Strain-SIAM-2010}.

\subsection{Hilbert expansion}
We consider the Hilbert expansion for small Knudsen number $\varepsilon$,
\begin{align}\label{expan}
\begin{aligned}
&F^{\varepsilon}=\sum_{n=0}^{2k-1}\varepsilon^nF_n+\varepsilon^kF^{\varepsilon}_R,\\
&E^{\varepsilon}=\sum_{n=0}^{2k-1}\varepsilon^n E_n+\varepsilon^kE^{\varepsilon}_R,\\
&B^{\varepsilon}=\sum_{n=0}^{2k-1}\varepsilon^n B_n+\varepsilon^kB^{\varepsilon}_R,\qquad (k\geq 2).
\end{aligned}
\end{align}
To determine the coefficients $F_0(t, x, p), \ldots , F_{2k-1}(t, x, p); E_0(t, x)$, $ \ldots,$
$E_{2k-1}(t, x)$; $B_0(t, x), \ldots,$
$B_{2k-1}(t, x)$, we plug the formal expansion \eqref{expan} into the rescaled equations \eqref{main1}-\eqref{main2} to have

\begin{align}
& \partial_t \Big(\sum_{n=0}^{2k-1}\varepsilon^nF_n+\varepsilon^kF^{\varepsilon}_R\Big) + c\hat{p} \cdot \nabla_x \Big(\sum_{n=0}^{2k-1}\varepsilon^nF_n+\varepsilon^kF^{\varepsilon}_R\Big)\nonumber\\
&-e_-\Big[\Big(\sum_{n=0}^{2k-1}\varepsilon^n E_n+\varepsilon^kE^{\varepsilon}_R\Big)+\hat{p} \times \Big(\sum_{n=0}^{2k-1}\varepsilon^n B_n+\varepsilon^kB^{\varepsilon}_R\Big)\Big]\cdot\nabla_p \Big(\sum_{n=0}^{2k-1}\varepsilon^nF_n+\varepsilon^kF^{\varepsilon}_R\Big)\nonumber\\
 &= \frac{1}{\varepsilon}Q\Big(\sum_{n=0}^{2k-1}\varepsilon^nF_n+\varepsilon^kF^{\varepsilon}_R,\sum_{n=0}^{2k-1}\varepsilon^nF_n+\varepsilon^kF^{\varepsilon}_R\Big)),\nonumber\\
& \partial_t\Big(\sum_{n=0}^{2k-1}\varepsilon^n E_n+\varepsilon^kE^{\varepsilon}_R\Big)-  c\nabla_x \times \Big(\sum_{n=0}^{2k-1}\varepsilon^n B_n+\varepsilon^kB^{\varepsilon}_R\Big)\nonumber\\
 &=4\pi e_-\int_{\mathbb R^3} \hat{p}\Big(\sum_{n=0}^{2k-1}\varepsilon^nF_n+\varepsilon^kF^{\varepsilon}_R\Big) dp,\label{expan1}\\
 &\partial_t\Big(\sum_{n=0}^{2k-1}\varepsilon^n B_n+\varepsilon^kB^{\varepsilon}_R\Big)+ c\nabla_x \times \Big(\sum_{n=0}^{2k-1}\varepsilon^n E_n+\varepsilon^kE^{\varepsilon}_R\Big)=0,\nonumber\\
& \nabla_x\cdot \Big(\sum_{n=0}^{2k-1}\varepsilon^n E_n+\varepsilon^kE^{\varepsilon}_R\Big)=4\pi e_-\Big(\bar{n} -\int_{\mathbb R^3}  \Big(\sum_{n=0}^{2k-1}\varepsilon^nF_n+\varepsilon^kF^{\varepsilon}_R\Big) dp\Big), \nonumber\\
& \nabla_x\cdot \Big(\sum_{n=0}^{2k-1}\varepsilon^n B_n+\varepsilon^kB^{\varepsilon}_R\Big)=0.\nonumber
\end{align}
Now we equate the coefficients on both sides of equation \eqref{expan1} in front of different powers of
the parameter $\varepsilon$ to obtain:
\begin{align}
\frac{1}{\varepsilon}:& Q(F_0,F_0)=0,\nonumber\\
\varepsilon^0:&\partial_tF_0+c\hat{p}\cdot\nabla_x F_0-e_-\Big(E_0+\hat{p} \times B_0 \Big)\cdot\nabla_pF_0=Q(F_1,F_0)+Q(F_0,F_1),\nonumber\\
 & \partial_tE_0-  c\nabla_x \times B_0 =4\pi e_-\int_{\mathbb R^3} \hat{p}F_0 dp, \nonumber\\
 &\partial_tB_0+ c\nabla_x \times E_0=0,\nonumber\\
& \nabla_x\cdot E_0=4\pi e_-\Big(\bar{n} -\int_{\mathbb R^3}  F_0 dp\Big), \nonumber\\
& \nabla_x\cdot B_0=0,\nonumber\\
&\ldots\ldots\label{expan2}\\
\varepsilon^n:& \partial_tF_n+c\hat{p}\cdot \nabla_xF_n-e_-\Big(E_n+\hat{p} \times B_n \Big)\cdot\nabla_pF_0-e_-\Big(E_0+\hat{p} \times B_0 \Big)\cdot\nabla_pF_n\nonumber\\
 &=\sum_{\substack{i+j=n+1\\i,j\geq0}}Q(F_i,F_j)+e_-\sum_{\substack{i+j=n\\i,j\geq1}}\Big(E_i+\hat{p} \times B_i \Big)\cdot\nabla_pF_j, \nonumber\\
&\partial_tE_n-c\nabla_x \times B_n=4\pi e_-\int_{\mathbb R^3} \hat{p}F_n dp, \nonumber\\
 &\partial_t B_n+ c\nabla_x \times E_n=0,\nonumber\\
& \nabla_x\cdot E_n=-4\pi e_-\int_{\mathbb R^3}  F_n dp, \nonumber\\
& \nabla_x\cdot  B_n=0,\nonumber\\
&\ldots\ldots\nonumber\\
\varepsilon^{2k-1}:& \partial_tF_{2k-1}+c\hat{p}\cdot\nabla_x F_{2k-1}-e_-\Big(E_{2k-1}+\hat{p} \times B_{2k-1} \Big)\cdot\nabla_pF_0\nonumber\\
&-e_-\Big(E_0+\hat{p} \times B_0 \Big)\cdot\nabla_pF_{2k-1}\nonumber\\
 &=\sum_{\substack{i+j=2k\\i,j\geq1}}Q(F_i,F_j)+e_-\sum_{\substack{i+j=2k-1\\i,j\geq1}}\Big(E_i+\hat{p} \times B_i \Big)\cdot\nabla_pF_j, \nonumber\\
&\partial_tE_{2k-1}-c\nabla_x \times B_{2k-1}=4\pi e_-\int_{\mathbb R^3} \hat{p}F_{2k-1} dp, \nonumber\\
 &\partial_t B_{2k-1}+ c\nabla_x \times E_{2k-1}=0,\nonumber\\
& \nabla_x\cdot E_{2k-1}=-4\pi e_-\int_{\mathbb R^3}  F_{2k-1} dp, \nonumber\\
& \nabla_x\cdot  B_{2k-1}=0.\nonumber
\end{align}
The remainder terms $F_R^{\varepsilon}, E_R^{\varepsilon}$ and $B_R^{\varepsilon}$ satisfy the following equations:
\begin{align}
& \partial_tF_R^{\varepsilon}+c\hat{p}\cdot\nabla_x F_R^{\varepsilon}-e_-\Big(E_R^{\varepsilon}+\hat{p} \times B_R^{\varepsilon} \Big)\cdot\nabla_pF_0\nonumber\\
 &-e_-\Big(E_0+\hat{p} \times B_0 \Big)\cdot\nabla_pF_R^{\varepsilon}-\frac{1}{\varepsilon}[Q(F_R^{\varepsilon},F_0)+Q(F_0,F_R^{\varepsilon})]\nonumber\\
 &=\varepsilon^{k-1}Q(F_R^{\varepsilon},F_R^{\varepsilon})+\sum_{i=1}^{2k-1}\varepsilon^{i-1}[Q(F_i, F_R^{\varepsilon})+Q(F_R^{\varepsilon}, F_i)]+\varepsilon^ke_-\Big(E_R^{\varepsilon}+\hat{p} \times B_R^{\varepsilon}\Big)\cdot\nabla_pF_R^{\varepsilon}\nonumber\\
 &+\sum_{i=1}^{2k-1}\varepsilon^ie_-\Big[\Big(E_i+\hat{p} \times B_i \Big)\cdot\nabla_pF_R^{\varepsilon}+ \Big(E_R^{\varepsilon}+\hat{p} \times B_R^{\varepsilon} \Big)\cdot\frac{\nabla_pF_i}{\sqrt{\mathbf{M}}}\Big]+\varepsilon^{k}A,
\nonumber\\
&\partial_tE_R^{\varepsilon}-c\nabla_x \times B_R^{\varepsilon}=4\pi e_-\int_{\mathbb R^3} \hat{p}F_R^{\varepsilon} dp, \label{remain}\\
 &\partial_t B_R^{\varepsilon}+ c\nabla_x \times E_R^{\varepsilon}=0,\nonumber\\
& \nabla_x\cdot E_R^{\varepsilon}=-4\pi e_-\int_{\mathbb R^3}  F_R^{\varepsilon} dp, \nonumber\\
& \nabla_x\cdot  B_R^{\varepsilon}=0,\nonumber
\end{align}
where
\begin{align*}
\begin{aligned}
A=\sum_{\substack{i+j\geq 2k+1\\2\leq i,j\leq2k-1}}\varepsilon^{i+j-2k-1}Q(F_i,F_j)+\sum_{\substack{i+j\geq 2k\\1\leq i,j\leq2k-1}}\varepsilon^{i+j-2k}e_-\Big(E_i+\hat{p} \times B_i \Big)\cdot\nabla_pF_j.
\end{aligned}
\end{align*}
From the first equation in \eqref{expan2}, we can obtain that $F_0$ should be a local Maxwellian $\mathbf{M}=F_0$:
\begin{equation}\label{maxwell}
F_0(t,x,p)= \frac{n_0 \gamma}{4 \pi m^3
c^3 K_2(\gamma)} \exp\Big\{\frac{u^{\mu}p_{\mu} }{k_BT_0}\Big\},
\end{equation}
where
$K_j(\gamma) (j=0, 1, 2, \ldots)$ are the modified second order Bessel functions:
\begin{equation*}\label{defini}
	 K_j(\gamma)=\frac{(2^j)j!}{(2j)!}\frac{1}{\gamma^j}\int_{\lambda=\gamma}^{\lambda=\infty}e^{-\lambda}(\lambda^2-\gamma^2)^{j-1/2}d\lambda,
\quad(j\geq0),
	\end{equation*}
 and
 $\gamma$ is a dimensionless variable defined as
\begin{equation}\label{temp}
\gamma=\frac{mc^2}{k_BT_0},
\end{equation}
$k_B$ is Boltzmann's constant. Here $n_0(t,x), u^{\mu}(t,x)$ and $T_0(t,x)$ are the number density, four-velocity and temperature. As proved in Proposition $3.3$ of \cite{Speck-Strain-CMP-2011}, it holds that
\begin{align}\label{moments3}
\begin{split}
&  I^\alpha[\mathbf{M}] = c \int_{\mathbb R^3}  \frac{p^\alpha}{p^0}\mathbf{M}\,dp=n_0u^{\alpha},\\
& T^{\alpha \beta}[\mathbf{M}] =
c \int_{\mathbb R^3}     \frac{ p^\alpha
p^\beta }{p^0} \mathbf{M}\,   d p=  \frac{e_0+P_0}{c^2}u^{\alpha}u^{\beta}+P_0 g^{\alpha\beta},
\end{split}
\end{align}
where $e_0(t,x)$ is the total energy and $P_0(t,x)$ is the pressure satisfying
 \begin{align*}
&   P_0 =  \frac{m nc^2}{\gamma} = \frac{k_B}{m}\rho T  \, , \\
& e_0= \frac{n_0 m c^2}{  K_2(\gamma)}  \left[ K_3 \left(\gamma
\right) - \frac{1}{\gamma}  K_2
\left(\gamma   \right) \right].
\end{align*}
Noting the relationship  $p^0=\sqrt{m^2c^2 + |p|^2}$ and
\begin{align*}
&-\int_{\mathbb R^3}  p^0\Big(E_0+\hat{p} \times B_{0} \Big)\cdot\nabla_p\mathbf{M} dp\\
&\hspace{0.5cm} =\int_{\mathbb R^3}E_{0}\cdot \hat{p}\mathbf{M} dp=\frac{n_0u\cdot E_{0}}{c},
\end{align*}
 we project the second equation in \eqref{expan2} onto the five collision invariants, $1, p, p^0$, for the Boltzmann operator $Q$ and use \eqref{moments3} to have
\begin{equation}
\begin{cases} \label{order0}
 \frac{1}{c}\partial_t(n_0 u^0) + \nabla_x\cdot(n_0 u) =0,\vspace{0.5ex}\\
 \frac{1}{c}\partial_t[(e_0+P_0)u^0u] + \nabla_x\cdot[(e_0+P_0)u\otimes u]\vspace{0.5ex}\\
\hspace{2cm}+c^2\nabla_xP_0+ce_-n_0[u^0E_0+u\times B_0]=0, \vspace{0.5ex}\\
\frac{1}{c}\partial_t[(e_0+P_0)(u^0)^2-c^2|u|^2P_0] + \nabla_x\cdot[(e_0+P_0)u^0u]+ce_-n_0 u\cdot E_0=0.
\end{cases}
\end{equation}
Then, under the conditions
\begin{equation}\label{cdn}
e_0+P_0=n_0h(n_0),\qquad\mbox{and}\qquad P_0'(n_0)=n_0h'(n_0)>0,
\end{equation}
which were indicated in \cite{Guo-Ionescu-Pausader-JMP-2014}, we can further use the equations of $\varepsilon^0$ power in \eqref{expan2} to obtain equations of $n_0, u$ and $E_0, B_0$:
\begin{equation}
\begin{cases} \label{order01}
 \frac{1}{c}\partial_t(n_0 u^0) + \nabla_x\cdot(n_0 u) =0,\vspace{0.5ex}\\
  \frac{1}{c}\partial_t[(e_0+P_0)u^0u] + \nabla_x\cdot[(e_0+P_0)u\otimes u]\vspace{0.5ex}\\
\hspace{2cm}+c^2\nabla_xP_0+ce_-n_0[u^0E_0+u\times B_0]=0, \vspace{0.5ex}\\
\partial_tE_0-c\nabla_x\times B_0 =4\pi e_- \frac{n_0u}{c},\vspace{0.5ex}\\
\partial_tB_0+c\nabla_x\times E_0 =0,\vspace{0.5ex}\\
\nabla_x\cdot E_0=4\pi e_-(\bar{n}-\frac{1}{c}n_0u^0),\vspace{0.5ex}\\
\nabla_x\cdot B_0=0.
\end{cases}
\end{equation}
In fact, we use $\eqref{order0}_1$ to have
\begin{equation}\label{order02}
\begin{aligned}
&\displaystyle  \frac{n_0u^0}{c}\partial_t\Big(\frac{e_0+P_0}{n_0}u\Big) + n_0 u\cdot \nabla_x\Big(\frac{e_0+P_0}{n_0} u\Big)\\
&\hspace{2cm}+c^2\nabla_xP_0+ce_-n_0[u^0E_0+u\times B_0]=0, \\
&\displaystyle \frac{n_0u^0}{c}\partial_t\Big(\frac{e_0+P_0}{n_0}u^0\Big)-c\partial_tP_0 +n_0\cdot \nabla_x\cdot\Big(\frac{e_0+P_0}{n_0}u\Big)+ce_-n_0 u\cdot E_0=0.
\end{aligned}
\end{equation}
We further employ $u^0{\eqref{order02}}_2-u\cdot{\eqref{order02}}_1$ to obtain
\begin{equation}\label{order3}
u^0\Big[n_0\partial_t\Big(\frac{e_0+P_0}{n_0}\Big)-\partial_tP_0\Big]+u\cdot\Big[n_0\nabla_x\Big(\frac{e_0+P_0}{n_0}\Big)-\nabla_xP_0\Big]=0.
\end{equation}
\eqref{order3} automatically holds under the conditions \eqref{cdn}. Namely,  the third equation in \eqref{order0}, the energy equation, can be expressed by the first and second equations in \eqref{order0}, and \eqref{order01} holds true.

Here and below, we assume that $[n_0(t,x), u(t,x), E_0(t,x), B_0(t,x)]$ is a global smooth solution to the relativistic Euler-Maxwell 1-fluid system \eqref{order01} constructed in \cite{Guo-Ionescu-Pausader-JMP-2014} with $n_0(t,x)-\bar{n}$, $u(t,x)$, $E_0(t,x)$ and $B_0(t,x)$ sufficiently small and satisfying
\begin{align}\label{decay}
&\sup_{t\in[0,\infty]}\|[n_0(t)-\bar{n}, u(t), E_0(t), B_0(t)]\|_{H^{N_0}}+\sup_{t\in[0,\infty]}\Big[(1+t)^{\beta_0}\\
&\times(\sup_{|\rho|\leq \bar{N}}\|D_x^{\rho}(n_0(t)-\bar{n})\|_{\infty}+\sup_{|\rho|\leq4}
\|[D_x^{\rho}u(t), D_x^{\rho}E_0(t), D_x^{\rho}B_0(t)]\|_{\infty})\Big]\lesssim \bar{\varepsilon}_0 , \nonumber
\end{align}
where the constant $N_0$ is sufficiently large, $\bar{N}\geq3$, $\beta_0=101/100$ and $\bar{\varepsilon}_0$ is a small positive constant.

For $\mathbf{M}$ given in \eqref{maxwell}, where $(n_0, u, T_0)(t,x)$ is part of a solution to the relativistic Euler-Maxwell 1-fluid system \eqref{order01} constructed in \cite{Guo-Ionescu-Pausader-JMP-2014}, we define the linearized collision operator $Lf$ and nonlinear collision operator ${ \Gamma} ( f_1, f_2 )$ :
\begin{equation*}
\begin{aligned}
{L}f &=-\frac{1}{\sqrt{\mathbf{M}}}[Q( \sqrt{\mathbf{M}} f, \mathbf{M} )+Q( \mathbf{M},\sqrt{\mathbf{M}}f  )]=\nu(\mathbf{M})f-K_{\mathbf{M}}(f), \\
\Gamma ( f_1, f_2 ) &=\frac{1}{\sqrt{\mathbf{M}}}Q( \sqrt{\mathbf{M}} f_1, \sqrt{\mathbf{M}}f_2 ),
\end{aligned}
\end{equation*}
where the collision frequency $\nu=\nu(t, x, p)$ is defined as
\begin{equation*}
 \nu=\Gamma^{loss} ( 1, \sqrt{\mathbf{M}} )=\int_{\mathbb R^3}dq\int_{\mathbb S^2}d\omega \frac{s}{p^0q^0}\mathbb B(p,q,\omega)\sqrt{\mathbf{M}}(q),
  \end{equation*}
and $K_{\mathbf{M}}(f)$ takes the following form:
\begin{align}\label{Koper}
  K_{\mathbf{M}}(f)=&\int_{\mathbb{R}^3}dq\int_{\mathbb S^2}d\omega \frac{s}{p^0q^0}\mathbb B(p,q,\omega)\sqrt{\mathbf{M}}(q)[\sqrt{\mathbf{M}}(q')f(p')+\sqrt{\mathbf{M}}(p')f(q')]\nonumber\\
  &-\int_{\mathbb{R}^3}dq\int_{\mathbb S^2}d\omega \frac{s}{p^0q^0}\mathbb B(p,q,\omega)\sqrt{\mathbf{M}}(q)\sqrt{\mathbf{M}}(p)f(q)\\
  =&K_2(f)-K_1(f).\nonumber
\end{align}
 Then from Lemma $3.1$ in \cite{Strain-CMP-2010} and Lemma 6 in \cite{Guo-Strain-CMP-2012}, we can similarly obtain
 \begin{equation*}\label{frequ}
 \nu(t, x, p)\approx 1,\qquad\mbox{and}\qquad |D_p^{\rho}\nu(t, x, p)|\lesssim (p^0)^{-1},\quad |\rho|>0.
 \end{equation*}

Note that the null space of the linearized operator $L$ is given by
\[\mathcal {N}=\mbox{span}\left\{\sqrt{\mathbf{M}}, p_i\sqrt{\mathbf{M}}(1\leq i\leq3), p^{0}\sqrt{\mathbf{M}}\right\}.
\]
Let $\bf P$ be the orthogonal projection from $L^2_p
$ onto $\mathcal {N}$. Given $f(t, x, p)$, one can
express ${\bf P}f$ as a linear combination of the basis in $\mathcal{N}$:
$${\bf P}f = \{a_f(t, x) + b_f(t, x) \cdot p + c_f(t, x)p^{0}\} \sqrt{\mathbf{M}}.$$
Then we have \cite{Glassey-Strauss-PRIMS-1993}
$$\langle Lf,f\rangle\geq \delta_0\|{\{\bf I-P\}}f\|^2,$$
for some constant $\delta_0>0$. Define $f^{\varepsilon}$ as
\begin{equation}\label{L2}
F^{\varepsilon}_R= \sqrt{\mathbf{M}}f^{\varepsilon}.
\end{equation}
We further introduce a global Maxwellian
$$J_{M}= \frac{n_M \gamma_M}{4 \pi m^3
c^3 K_2(\gamma_M)} \exp\{- \frac{ p^0 }{k_BT_M}  \},$$
and define
\begin{equation}\label{Linfty}
F^{\varepsilon}_R= (1+|p|)^{-\beta}\sqrt{J_{M}}h^{\varepsilon}=\frac{\sqrt{J_{M}}}{w(|p|)}h^{\varepsilon},
\end{equation}
with $w(|p|)\equiv(1+|p|)^{\beta}$ for some $\beta\geq 8$.
Here $n_M, T_M=mc^2/(k_B \gamma_M)$ satisfy the conditions in \eqref{cdn} and
\begin{equation}\label{temp}
T_M<\sup_{t\in [0,\frac{1}{2}|\ln\varepsilon|^{\frac{1}{3}}],x\in \mathbb R^3} T_0(t,x)< 2T_M.
\end{equation}

\begin{remark}
Since the presence of physical constants do not cause essential
mathematical difficulties,  we will normalize all constants in the relativistic Vlasov-Maxwell-Boltzmann system \eqref{main1}, \eqref{main2} and in all related quantities to be one.
\end{remark}

\subsection{Notations} Throughout the paper, $C$ denotes a generic positive constant which may change line by line. The notation $A \lesssim B$ implies that there exists a positive constant $C$  such
that $A \leq CB$ holds uniformly over the range of parameters. The
notation $ A \approx B$ means $\frac{1}{\bar{C}}A\leq B\leq \bar{C}A$ for some constant $\bar{C}>1$. We use
standard notations to denote the Sobolev spaces $W^{ k,2}(\mathbb R^3_x)$ (or $W^{ k,2}(\mathbb R^3_x\times\mathbb R^3_p)$) and $W^{ k,\infty}(\mathbb R^3_x)$ (or $W^{ k, \infty}(\mathbb R^3_x\times\mathbb R^3_p)$) with corresponding
norms $\|\cdot\|_{H^k}$ and  $\|\cdot\|_{W^{k,\infty}}$, respectively. We also use the standard notations $\|\cdot\|$ and $\|\cdot\|_{\infty}$ to denote the
$L^2$ norm and $L^{\infty}$ norm in both $(x, p)\in \mathbb R^3\times \mathbb R^3$ and $x\in \mathbb R^3$, respectively.  The standard $L^2(\mathbb R^3\times \mathbb R^3)$ inner product is denoted as $\langle \cdot, \cdot \rangle $ for simplicity. $D_x$ and $D_p$ are used as any space and momentum derivative, respectively.

\subsection{Main results} We now state our main results.
\begin{theorem}\label{result} Let $F_0 =\mathbf{M}$  as in \eqref{maxwell} and let $[n_0(0, x), u(0, x), E_0(0,x), B_0(0,x)]$ satisfies the same assumptions in Theorem $2.2$ of \cite{Guo-Ionescu-Pausader-JMP-2014}
and $[n_0(t,x), u(t,x), E_0(t,x)$, $B_0(t,x)]$ be a corresponding global solution. Then for the
remainder terms $F^{\varepsilon}_R$,$E^{\varepsilon}_R$ and $B^{\varepsilon}_R$
in \eqref{expan}, there exists an $\varepsilon_0 > 0$  such
that for $0 \leq \varepsilon\leq\varepsilon_0$,
\begin{eqnarray}
&&\sup_{0\leq t\leq \frac{1}{2}|\ln\varepsilon|^{\frac{1}{3}}}
\Big(\varepsilon^{\frac{7}{4}}\Big\|\frac{(1+|p|)^{\beta}F^{\varepsilon}_R}{\sqrt{J_M}}(t)\Big\|_{\infty}
+\varepsilon^{3}\Big\|\frac{(1+|p|)^{\beta}F^{\varepsilon}_R}{\sqrt{J_M}}(t)\Big\|_{W^{1,\infty}}\Big)\nonumber\\
&&\hspace{0.5cm}+\sup_{ 0\leq t\leq \frac{1}{2}|\ln\varepsilon|^{\frac{1}{3}}}
\Big(\varepsilon^{\frac{11}{4}}\|[E^{\varepsilon}_R(t),B^{\varepsilon}_R(t)]\|_{\infty}+\varepsilon^{4}
\|[E^{\varepsilon}_R(t),B^{\varepsilon}_R(t)]\|_{W^{1,\infty}}\Big)
\nonumber\\
&&\hspace{0.5cm}
+\sup_{ 0\leq t\leq \frac{1}{2}|\ln\varepsilon|^{\frac{1}{3}}}\Big[\Big\|\frac{F^{\varepsilon}_R}{\sqrt{\mathbf{M}}}(t)\Big\|
+\|[E^{\varepsilon}_R(t),B^{\varepsilon}_R(t)]\|\nonumber\\
&&\hspace{0.5cm}+\sqrt{\varepsilon}\Big(\Big\|\frac{F^{\varepsilon}_R}{\sqrt{\mathbf{M}}}(t)\Big\|_{H^1}
+\|[E^{\varepsilon}_R(t),B^{\varepsilon}_R(t)]\|_{H^1}\Big)\Big]\label{theo1}\\
&&\lesssim\varepsilon^{\frac{3}{2}}\Big\|\frac{(1+|p|)^{\beta}F^{\varepsilon}_R}{\sqrt{J_M}}(0)\Big\|_{\infty}
+\varepsilon^{\frac{3}{2}}\Big\|\frac{(1+|p|)^{\beta}F^{\varepsilon}_R}{\sqrt{J_M}}(0)\Big\|_{W^{1,\infty}}
+\varepsilon^{^{\frac{3}{2}}}\|[E^{\varepsilon}_R(0),B^{\varepsilon}_R(0)]\|_{\infty}
\nonumber\\
&&\hspace{0.5cm}+\varepsilon^{^{\frac{3}{2}}}\|[E^{\varepsilon}_R(0),B^{\varepsilon}_R(0)]\|_{W^{1,\infty}}
+\Big\|\frac{F^{\varepsilon}_R}{\sqrt{\mathbf{M}}}(0)\Big\|+\|[E^{\varepsilon}_R(0),B^{\varepsilon}_R(0)]\|\nonumber\\
&&\hspace{0.5cm}+\sqrt{\varepsilon}\Big(\Big\|\frac{F^{\varepsilon}_R}{\sqrt{\mathbf{M}}}(0)\Big\|_{H^1}
+\|[E^{\varepsilon}_R(0),B^{\varepsilon}_R(0)]\|_{H^1}\Big)+1.\nonumber
\end{eqnarray}
\end{theorem}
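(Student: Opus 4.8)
The proof is an $L^{2}$--$L^{\infty}$ bootstrap tailored to the stiff factor $\varepsilon^{-1}$ and to the \emph{hyperbolic} Maxwell subsystem, the latter forcing the use of the Glassey--Strauss representation of \cite{Glassey-Strauss-PRIMS-1993, Glassey-Strauss-TTSP-1991} rather than an elliptic gain of regularity. Write $f^{\varepsilon}=F^{\varepsilon}_{R}/\sqrt{\mathbf{M}}$ and $h^{\varepsilon}=w F^{\varepsilon}_{R}/\sqrt{J_{M}}$ with $w=(1+|p|)^{\beta}$, and let $\mathcal{N}(t)$ denote the sum of norms at time $t$ appearing under the suprema on the left side of \eqref{theo1}; thus $\|h^{\varepsilon}(t)\|_{\infty}\le\varepsilon^{-3/2}\mathcal{N}(t)$, $\|h^{\varepsilon}(t)\|_{W^{1,\infty}}\le\varepsilon^{-2}\mathcal{N}(t)$, $\|[E^{\varepsilon}_{R},B^{\varepsilon}_{R}](t)\|_{W^{1,\infty}}\le\varepsilon^{-9/4}\mathcal{N}(t)$, $\|f^{\varepsilon}(t)\|_{H^{1}}\le\varepsilon^{-1/2}\mathcal{N}(t)$. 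Granting a standard local theory for \eqref{remain}, the result follows from the a priori bound $\mathcal{N}(t)\lesssim\mathcal{N}(0)+1$, uniformly in $\varepsilon$, on the maximal subinterval of $[0,\varepsilon^{-1/2}]$ on which $\mathcal{N}(t)\le 2(\mathcal{N}(0)+1)$, together with the usual continuity argument. Two structural facts make this work over the \emph{growing} interval $[0,\varepsilon^{-1/2}]$: every Hilbert coefficient $F_{1},\ldots,F_{2k-1},E_{1},\ldots,B_{2k-1}$ inherits the smallness and the \emph{integrable} time decay $\bar{\varepsilon}_{0}(1+t)^{-\beta_{0}}$, $\beta_{0}=101/100>1$, of the background Euler--Maxwell solution through \eqref{decay}, so every coefficient of a Gronwall-type term is either $\bar{\varepsilon}_{0}(1+t)^{-\beta_{0}}$ or $O(\varepsilon^{1/2}\mathcal{N})$; and the condition $T_{M}<\sup_{[0,\varepsilon^{-1/2}]}T_{0}<2T_{M}$ guarantees $\mathbf{M}\approx J_{M}$ up to polynomial weights precisely on this interval, whence $\|w f^{\varepsilon}\|_{\infty}\lesssim\|h^{\varepsilon}\|_{\infty}$ there. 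There are four coupled blocks.

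\emph{$L^{2}$ and $H^{1}$ energy estimates.} Pairing the $f^{\varepsilon}$-form of \eqref{remain} with $f^{\varepsilon}$ turns the stiff term into $\tfrac{1}{\varepsilon}\langle Lf^{\varepsilon},f^{\varepsilon}\rangle\ge\tfrac{\delta_{0}}{\varepsilon}\|\{\mathbf{I}-\mathbf{P}\}f^{\varepsilon}\|^{2}$, the good dissipation. The macroscopic part $\mathbf{P}f^{\varepsilon}$ is invisible to $L$ but enters the remaining terms only with $O(1)$ coefficients, the worst of which decay by \eqref{decay}; the transport operators $c\hat{p}\cdot\nabla_{x}$ and $-e_{-}(E_{0}+\hat{p}\times B_{0})\cdot\nabla_{p}$ are skew-adjoint up to lower-order terms with such decaying coefficients; and the coupling $-e_{-}(E^{\varepsilon}_{R}+\hat{p}\times B^{\varepsilon}_{R})\cdot\nabla_{p}F_{0}$ is combined with the Maxwell identity $\tfrac12\tfrac{d}{dt}\|[E^{\varepsilon}_{R},B^{\varepsilon}_{R}]\|^{2}=4\pi e_{-}\langle\hat{p}F^{\varepsilon}_{R},E^{\varepsilon}_{R}\rangle$ so that the leading field terms cancel, exactly as in the classical Vlasov--Maxwell--Boltzmann energy method. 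The nonlinearity $\varepsilon^{k-1}\Gamma(f^{\varepsilon},f^{\varepsilon})$, the $F_{i}$-cross terms, and the Vlasov term $\varepsilon^{k}(E^{\varepsilon}_{R}+\hat{p}\times B^{\varepsilon}_{R})\cdot\nabla_{p}F^{\varepsilon}_{R}$ all carry enough positive powers of $\varepsilon$ that, after a large/small split in $|p|$ and using $\|h^{\varepsilon}\|_{\infty}\le\varepsilon^{-3/2}\mathcal{N}$ and $k\ge 3$, they are dominated by a small multiple of the dissipation plus $\varepsilon^{1/2}\mathcal{N}\,\|f^{\varepsilon}\|^{2}$; the source $\varepsilon^{k}A$, Schwartz in $p$ with all moments bounded, yields only the harmless $+1$. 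This produces $\tfrac{d}{dt}\big(\|f^{\varepsilon}\|^{2}+\|[E^{\varepsilon}_{R},B^{\varepsilon}_{R}]\|^{2}\big)+\tfrac{\delta_{0}}{2\varepsilon}\|\{\mathbf{I}-\mathbf{P}\}f^{\varepsilon}\|^{2}\lesssim\big(\bar{\varepsilon}_{0}(1+t)^{-\beta_{0}}+\varepsilon^{1/2}\mathcal{N}\big)\big(\|f^{\varepsilon}\|^{2}+\|[E^{\varepsilon}_{R},B^{\varepsilon}_{R}]\|^{2}\big)+1$. The $H^{1}$ estimate repeats this after $\nabla_{x}$: the commutators $\tfrac{1}{\varepsilon}(\nabla_{x}\nu)f^{\varepsilon}-\tfrac{1}{\varepsilon}(\nabla_{x}K_{\mathbf{M}})f^{\varepsilon}$ and the $\nabla_{x}\mathbf{M}$-terms have coefficients controlled by $\sup_{t}\|D^{\rho}_{x}[n_{0}-\bar{n},u,E_{0},B_{0}]\|_{\infty}\lesssim\bar{\varepsilon}_{0}(1+t)^{-\beta_{0}}$ from \eqref{decay} (no second derivative of $F^{\varepsilon}_{R}$ is needed), and $\langle(\nabla_{x}L)f^{\varepsilon},\nabla_{x}f^{\varepsilon}\rangle$ is recast as $-\langle\{\mathbf{I}-\mathbf{P}\}f^{\varepsilon},L\nabla_{x}\mathbf{P}f^{\varepsilon}\rangle+\cdots$, each piece absorbed by the $\{\mathbf{I}-\mathbf{P}\}$-dissipation or by an $\bar{\varepsilon}_{0}$ factor; the one surviving power of $\varepsilon^{-1}$ is why the whole $H^{1}$ inequality is multiplied by $\varepsilon$, the origin of the $\sqrt{\varepsilon}$ weights in \eqref{theo1}.

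\emph{$L^{\infty}$, $W^{1,\infty}$ along characteristics, and Glassey--Strauss for the field.} In the $h^{\varepsilon}$ variable \eqref{remain} reads $\partial_{t}h^{\varepsilon}+c\hat{p}\cdot\nabla_{x}h^{\varepsilon}-e_{-}(E_{0}+\hat{p}\times B_{0})\cdot\nabla_{p}h^{\varepsilon}+\tfrac{\nu(\mathbf{M})}{\varepsilon}h^{\varepsilon}=\tfrac{1}{\varepsilon}K_{w}h^{\varepsilon}+(\text{l.o.t.})$, where $K_{w}$ is $K_{\mathbf{M}}$ conjugated by $w\sqrt{J_{M}}^{\,-1}$ and the lower-order terms carry the prefactor $\varepsilon^{k}$ (remainder field on $F^{\varepsilon}_{R}$), the decaying $\bar{\varepsilon}_{0}(1+t)^{-\beta_{0}}$ (terms with $F_{i}$), or equal $\varepsilon^{k}wA/\sqrt{J_{M}}$. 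Integrating along the backward characteristics of $-e_{-}(E_{0}+\hat{p}\times B_{0})$ — globally defined, with Jacobian $\approx 1$ thanks to \eqref{decay} — and substituting the Duhamel formula once more into $\tfrac1\varepsilon K_{w}h^{\varepsilon}$, the resulting double time-integral is, by the kernel estimates for $K_{\mathbf{M}}$ and the post-collisional change of variables, bounded by $\eta\sup_{[0,t]}\|h^{\varepsilon}\|_{\infty}+C_{\eta,\varepsilon}\sup_{[0,t]}\|f^{\varepsilon}\|+(\text{l.o.t.})$; absorbing the first term gives $\sup_{[0,t]}\|h^{\varepsilon}\|_{\infty}\lesssim\|h^{\varepsilon}(0)\|_{\infty}+C_{\varepsilon}\sup_{[0,t]}\|f^{\varepsilon}\|+(\text{field})+1$, and the weight $\varepsilon^{3/2}$ in $\mathcal{N}$ balances $C_{\varepsilon}$ against the $L^{2}$ slot. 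The $W^{1,\infty}$ bound differentiates the transport equation in $(x,p)$: the new forcing has $\nabla_{x}(E_{0},B_{0})$ (from \eqref{decay}), $\varepsilon^{k}\nabla_{x}(E^{\varepsilon}_{R},B^{\varepsilon}_{R})$ ($\le\varepsilon^{k-9/4}\mathcal{N}$), $(\nabla_{x}K_{\mathbf{M}})h^{\varepsilon}$, and $K_{w}(\nabla_{x}h^{\varepsilon})$ whose double-Duhamel reduction now costs $\|\nabla_{x}f^{\varepsilon}\|=\|f^{\varepsilon}\|_{H^{1}}$ — this is where the $H^{1}$ block is used — and the exponents $\varepsilon^{2},\varepsilon^{9/4}$ are fixed so this loop closes. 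For the field, since the Maxwell subsystem is hyperbolic we use the representation \cite{Glassey-Strauss-PRIMS-1993} to express $[E^{\varepsilon}_{R},B^{\varepsilon}_{R}](t,x)$ and, after one more differentiation, their first $x$-derivatives, as light-cone integrals of the remainder's charge/current densities $\int F^{\varepsilon}_{R}\,dp,\ \int\hat{p}F^{\varepsilon}_{R}\,dp$ plus the data; the $x$-derivatives falling on $F^{\varepsilon}_{R}$ inside the kernel are removed via $\partial_{t}F^{\varepsilon}_{R}=-c\hat{p}\cdot\nabla_{x}F^{\varepsilon}_{R}+\cdots$ from \eqref{remain} and an integration by parts in $p$, so that only $\|h^{\varepsilon}\|_{\infty},\|h^{\varepsilon}\|_{W^{1,\infty}}$ (plus l.o.t.) enter, the weight $\sqrt{J_{M}}$ making the $p$-integrals converge; this yields $\|[E^{\varepsilon}_{R},B^{\varepsilon}_{R}](t)\|_{W^{1,\infty}}\lesssim\|[E^{\varepsilon}_{R},B^{\varepsilon}_{R}](0)\|_{W^{1,\infty}}+\sup_{[0,t]}(\|h^{\varepsilon}\|_{\infty}+\|h^{\varepsilon}\|_{W^{1,\infty}})+(\text{l.o.t.})$, which, multiplied by $\varepsilon^{9/4}$, re-enters $\mathcal{N}$ with a spare power of $\varepsilon$. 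Assembling the blocks, the $L^{2}$/$H^{1}$ inequality $\tfrac{d}{dt}\mathcal{E}_{2}\lesssim\big(\bar{\varepsilon}_{0}(1+t)^{-\beta_{0}}+\varepsilon^{1/2}\mathcal{N}\big)\mathcal{E}_{2}+1$ for $\mathcal{E}_{2}=\|f^{\varepsilon}\|^{2}+\|[E^{\varepsilon}_{R},B^{\varepsilon}_{R}]\|^{2}+\varepsilon(\|f^{\varepsilon}\|^{2}_{H^{1}}+\|[E^{\varepsilon}_{R},B^{\varepsilon}_{R}]\|^{2}_{H^{1}})$ has coefficient integrable on $[0,\varepsilon^{-1/2}]$ (as $\beta_{0}>1$ and $\varepsilon^{1/2}\cdot\varepsilon^{-1/2}=O(1)$), so Gronwall gives $\sup_{[0,\varepsilon^{-1/2}]}\mathcal{E}_{2}\lesssim\mathcal{E}_{2}(0)+1$; feeding this into the (algebraic) $L^{\infty}$/$W^{1,\infty}$ and field bounds closes the bootstrap and yields \eqref{theo1}.

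\emph{Main obstacle.} The scheme is circular: the Glassey--Strauss step bounds the $W^{1,\infty}$ of the field by $h^{\varepsilon}$; the characteristic step bounds $h^{\varepsilon}$ by the field and by $f^{\varepsilon}$; the energy step bounds $f^{\varepsilon}$ and the $L^{2}$ field by $h^{\varepsilon}$ — all carrying the stiff $\varepsilon^{-1}$. The crux is therefore not any single estimate but the simultaneous calibration of the $\varepsilon$-weight hierarchy $(\varepsilon^{3/2},\varepsilon^{2},\varepsilon^{9/4},\sqrt{\varepsilon})$ so that (a) each arrow of the loop, most delicately the one from the Glassey--Strauss bound of the characteristic-controlling field back into the $W^{1,\infty}$ estimate of $h^{\varepsilon}$, carries a strictly positive net power of $\varepsilon$, and (b) every Gronwall coefficient stays integrable over the growing interval $[0,\varepsilon^{-1/2}]$; it is precisely this interplay of the singular limit, the derivative-free field representation, and the time-decay of the Euler--Maxwell background around which the bookkeeping must be arranged.
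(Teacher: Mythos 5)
Your high-level scheme — an $L^{2}$--$L^{\infty}$ bootstrap combining an energy estimate, an auxiliary $H^{1}$ estimate, a characteristic-Duhamel $L^{\infty}$/$W^{1,\infty}$ estimate for $h^{\varepsilon}$, and the Glassey--Strauss light-cone representation for $(E^{\varepsilon}_{R},B^{\varepsilon}_{R})$ with the stated $\varepsilon$-weight calibration — is the right one and matches the paper's organization. You also correctly identify the two structural inputs that make the interval $[0,\varepsilon^{-1/2}]$ tractable: the integrable decay $(1+t)^{-\beta_{0}}$ of the background from \eqref{decay}, and the temperature condition \eqref{temp} linking $\mathbf{M}$ and $J_{M}$. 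However, two of your implementation choices would cause the argument to fail if carried out as written.

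\textbf{Wrong characteristics.} You integrate $h^{\varepsilon}$ along the characteristics of the \emph{background} force $-e_{-}(E_{0}+\hat{p}\times B_{0})$ and push $\varepsilon^{k}e_{-}(E^{\varepsilon}_{R}+\hat{p}\times B^{\varepsilon}_{R})\cdot\nabla_{p}h^{\varepsilon}$ (and the $F_{i}$-field terms) to the right-hand side as lower order. But bounding that remainder term in $L^{\infty}$ requires $\|\nabla_{p}h^{\varepsilon}\|_{\infty}$ — precisely the quantity you have not yet estimated at the $L^{\infty}$ stage. The paper avoids this by defining the characteristics \eqref{chara1} with the \emph{full} field $(E^{\varepsilon},B^{\varepsilon})=\sum_{n}\varepsilon^{n}(E_{n},B_{n})+\varepsilon^{k}(E^{\varepsilon}_{R},B^{\varepsilon}_{R})$, so that every $\nabla_{p}h^{\varepsilon}$-bearing transport term is absorbed into the material derivative; the price is that one needs $\|[E^{\varepsilon},B^{\varepsilon}]\|_{W^{1,\infty}}\lesssim 1$ on the interval, which is exactly why the bootstrap hypothesis $\varepsilon^{11/4}\|[E^{\varepsilon}_{R},B^{\varepsilon}_{R}]\|_{W^{1,\infty}}\le 1$ is imposed and why the characteristic estimates in Lemma~\ref{charc} are proved under \eqref{assump} rather than under \eqref{decay} alone. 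Your version of the loop has the $W^{1,\infty}$ field bound feeding into the $W^{1,\infty}$ bound for $h^{\varepsilon}$; the paper's has it already entering the $L^{\infty}$ bound through the characteristics.

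\textbf{Missing momentum-derivative device in $H^{1}$.} You sketch the $H^{1}$ estimate only ``after $\nabla_{x}$'', but $\|f^{\varepsilon}\|_{H^{1}}$ in \eqref{theo1} is in $(x,p)$, and a direct $\nabla_{p}$ energy estimate on $f^{\varepsilon}$ fails: differentiating $c\hat{p}\cdot\nabla_{x}f^{\varepsilon}$ in $p$ produces $\langle(D_{p}\hat{p})\cdot\nabla_{x}f^{\varepsilon},\nabla_{p}f^{\varepsilon}\rangle\lesssim\|D_{p}f^{\varepsilon}\|\|D_{x}f^{\varepsilon}\|$, which carries no small factor and no decaying coefficient and would make the Gr\"onwall coefficient non-integrable on $[0,\varepsilon^{-1/2}]$. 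The paper gets around this by applying $D_{p}$ to the equation for the \emph{microscopic} part $\{\mathbf{I}-\mathbf{P}\}f^{\varepsilon}$ (Proposition~\ref{H1p}): then the pairing is against $\nabla_{p}\{\mathbf{I}-\mathbf{P}\}f^{\varepsilon}$, the $\frac{1}{\varepsilon}L$-dissipation applies, and the troublesome term becomes $\frac{\kappa}{\varepsilon}\|D_{p}\{\mathbf{I}-\mathbf{P}\}f^{\varepsilon}\|^{2}+\frac{\varepsilon}{\kappa}\|D_{x}f^{\varepsilon}\|^{2}$, with the first piece absorbed and the second carrying a spare $\varepsilon$ that cancels the length of the interval. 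This is not cosmetic: without it your $\mathcal{E}_{2}$ inequality does not close, which is why the paper singles it out as one of the main new ingredients.

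A smaller inaccuracy: your Glassey--Strauss step claims the $W^{1,\infty}$ bound for the field involves ``only $\|h^{\varepsilon}\|_{\infty},\|h^{\varepsilon}\|_{W^{1,\infty}}$ (plus l.o.t.)''. In fact, when the operator $S$ is applied twice to $F^{\varepsilon}_{R}$ and the remainder equation \eqref{remain} is substituted, the $\frac{1}{\varepsilon}$-collision terms reappear with an extra $\frac{1}{\varepsilon}$, and after the necessary integrations by parts one is left with $L^{2}$-in-$x$ quantities $\|f^{\varepsilon}\|$ and $\|f^{\varepsilon}\|_{H^{1}}$ multiplied by $\varepsilon^{-1}$ and $\varepsilon^{-2}$; this is visible in Proposition~\ref{TWEB}. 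These contributions are exactly what the $\varepsilon^{9/4}$ weight on the field and the $\sqrt{\varepsilon}$ weight on $\|f^{\varepsilon}\|_{H^{1}}$ are tuned against, so their presence must be tracked when you ``calibrate the hierarchy'' in your closing paragraph.
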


\begin{remark}
 We require $ k \geq 5$ for the expansion in \eqref{expan}. This requirement is  more restricted than the case of the Hilbert expansion for the Boltzmann equation \cite{Guo-JJ-KRM-2009, Guo-JJ-CPAM-2010, Speck-Strain-CMP-2011} and more relaxed than that for the Vlasov-Poisson-Boltzmann system \cite{Guo-Jang-CMP-2010}. Moreover, our uniform estimates lead to the relativistic Euler-Maxwell limit \eqref{limit}. Here we point out that the estimate \eqref{theo1} is not accurate and is written in a form for simplicity.  Explicit estimates corresponding to \eqref{theo1} can be found in Section \ref{mr}.

\end{remark}

Our result guarantees that the Hilbert expansion for the relativistic Vlasov-Maxwell-Boltzmann system is valid for any time if $\varepsilon$ is chosen sufficiently small. Similar result was also obtained in \cite{Guo-Jang-CMP-2010} for the Hilbert expansion of the Vlasov-Poisson-Boltzmann system.
 Due to shock formations in the pure compressible
Euler flow, as illustrated in \cite{Sideris-CMP-1985,Christodoulou-2007}, corresponding Hilbert expansion, acoustic limit  for the Boltzmann equation or relativistic Boltzmann equation \cite{Caflisch-CPAM-1980, Guo-JJ-KRM-2009, Guo-JJ-CPAM-2010, Speck-Strain-CMP-2011} is only valid local in time. Different from the pure compressible Euler fluids where shock waves may develop even for smooth irrotational initial data with small amplitude, the electromagnetic interaction in the two-fluid models \cite{Guo-CMP-1998,Guo-Ionescu-Pausader-JMP-2014,Guo-Ionescu-Pausader-Annals-2016} could create stronger dispersive effects, enhance linear decay rates, and prevent formation of shock waves with small amplitude.

Our method is based on an $L^2-L^{\infty}$ framework originated in \cite{Guo-ARMA-2010}. In Section \ref{l2}, we first establish the basic $L^2$ estimate for remainders $(f^{\varepsilon}, E^{\varepsilon}_ R, B^{\varepsilon}_ R)$.
 In order to close the energy estimate \eqref{L2ener0} in Proposition \ref{L2ener}, we need $L^{\infty}$ estimate of $h^{\varepsilon}$ along the curved characteristic given by
\begin{align}\label{chara1}
\frac{d X(\tau; t, x, p)}{d\tau}&=\hat{P}(\tau; t, x, p),\nonumber\\
\frac{d P(\tau; t, x, p)}{d\tau}&=-E^{\varepsilon}(\tau, X(\tau; t, x, p))-\hat{P}(\tau; t, x, p)\times B^{\varepsilon}(\tau, X(\tau; t, x, p)),
\end{align}
with  $X(t; t, x, p)=x$, $P(t; t, x, p)=p$.

In Section \ref{cl}, we study the characteristics and $L^{\infty}$ estimate of $h^{\varepsilon}$ under the crucial bootstrap assumptions:
\begin{align}\label{assump}
  &\sup_{t\in[0,T]}\varepsilon^{2}\|h^{\varepsilon}(t)\|_{\infty}\leq \varepsilon^{\frac{1}{4}},\nonumber\\
 &\sup_{t\in[0,T]}\varepsilon^{3}\Big(\|\nabla_xh^{\varepsilon}(t)\|_{\infty}+\|\nabla_ph^{\varepsilon}(t)\|_{\infty}\Big)\leq \varepsilon^{\frac{1}{8}},\\
 &\sup_{t\in[0,T]}\varepsilon^{4}\|[E^{\varepsilon}_ R(t), B^{\varepsilon}_ R(t)]\|_{W^{1,\infty}}\leq \varepsilon^{\frac{1}{8}},\nonumber
  \end{align}
 for given $T\in [0,\frac{1}{2}|\ln\varepsilon|^{\frac{1}{3}}]$.
We can modify the $L^{\infty}$ estimate of $h^{\varepsilon}$ in Subsection 5.1 of \cite{Guo-Jang-CMP-2010} and obtain Proposition
\ref{linfty}. Combing Proposition \ref{L2ener},   Proposition \ref{linfty} and the $L^{\infty}$ estimate of the electromagnetic field, we finally close the $L^2-L^{\infty}$ energy estimates. The rest of the paper is   mainly devoted to the proof of \eqref{assump}.

In Section \ref{w1eb}, we estimate the $W^{1,\infty}$ norm of $E^{\varepsilon}_R, B^{\varepsilon}_R$ in term of the  $W^{1,\infty}$ norm of $f^{\varepsilon}$, via Glassey-Strauss Representation (see Theorem 3 and Theorem 4 in \cite{Glassey-Strauss-ARMA-1986}). In the previous work \cite{Guo-Jang-CMP-2010}, the electric field $E^{\varepsilon}_R$ was calculated via elliptic estimate of the Poisson system:
$$E^{\varepsilon}_R=-\nabla_x\phi^{\varepsilon}_R,\qquad \triangle\phi^{\varepsilon}_R=\rho^{\varepsilon}_R,\quad \rho^{\varepsilon}_R\in L^{\infty}.$$
It is important to note that the relativistic nature of bounded velocity $\hat{p}$ and the strong weight $(1+|p|)^k$ play key roles. We remark that the $W^{1,\infty}$ estimate \eqref{TWEB1} in Proposition \ref{TWEB} fails for the classical Vlasov-Maxwell-Boltzmann system with unbounded velocity. Our method relies crucially on the Glassey-Strauss representation, which fails for the non-relativistic case. Once the relativistic 1-fluid model is derived, it is then possible to study the classical (non-relativistic limit) as the speed of light goes to $\infty$. It is possible to extend our result to the 2D case, with a similar Glassey-Strauss formulation in $2D$ \cite{Glassey-Schaeffer-CMP-1997}.

In Section \ref{w1h}, we establish the $W^{1,\infty}$ estimate for $h^{\varepsilon}$.
In \cite{Guo-Jang-CMP-2010}, such an estimate relies on a second integration by parts
\begin{eqnarray}
&&D_xh^{\varepsilon}(t,x,v)\sim\ldots+\frac{1}{\varepsilon^2}
\int_0^t\int_0^s\exp\Big\{-\frac{1}{\varepsilon}\int_s^t\nu(\tau)d\tau-\frac{1}{\varepsilon}\int_{s_1}^s\nu(\tau)d\tau\Big\}\nonumber\\
&&\hspace{0.5cm}\times\int_{\mathbb{R}^3\times\mathbb{R}^3}l_{M,w}(V(s),v')l_{M,w}(V(s_1),v'')D_xh^{\varepsilon}(s_1, X(s_1), v'')|\frac{dv'}{dy}| dv''yds_1ds+\ldots\label{fe}\\
&&\hspace{0.5cm}\sim\ldots-\frac{1}{\varepsilon^2}
\int_0^t\int_0^{s-\kappa\varepsilon}\exp\Big\{-\frac{1}{\varepsilon}\int_s^t\nu(\tau)d\tau-\frac{1}{\varepsilon}\int_{s_1}^s\nu(\tau)d\tau\Big\}\nonumber\\
&&\hspace{0.5cm}\times\int_{\hat{B}}l_{N}(V(s),v')l_{N}(V(s_1),v'')h^{\varepsilon}(s_1, X(s_1), v'')D_x(|\frac{dv'}{dy}|)dydv''ds_1ds+\ldots,\nonumber
\end{eqnarray}
 where $l_{M,w}$ is the kernel corresponding to $K_{M,w}$ in \cite{Guo-Jang-CMP-2010}, $l_N$ is a smooth approximation of $l_{M,w}$ with compact support, $y=X(s_1)$, and $\hat{B}=\{|y-X(s)|\leq C(s-s_1)N, |v''|\leq 3N\}$.
 Thanks to the elliptic regularity of the Poisson system, the electric field $E^{\varepsilon}=-\nabla_x\phi^{\varepsilon}$ is bounded almost in $W^{2,\infty}$ with the auxiliary $W^{1,\infty}$ bound for the distribution function, and $\|D_x(|\frac{dv'}{dy}|)\|_{L^2_{\hat{B}}}$ can also be bounded by $C_N/(\varepsilon^4)$. Unfortunately, in the presence of the magnetic field, the Maxwell system is hyperbolic, and such a gain of regularity and bound of $\|D_x(|\frac{dv'}{dy}|)\|_{L^2_{\hat{B}}}$ are impossible. Instead, we estimate the first expression in \eqref{fe}
    by the $H^1$ norm of $f^{\varepsilon}$ and $W^{1,\infty}$ norm of $(E^{\varepsilon}, B^{\varepsilon})$.

     We turn to an auxiliary $H^1$ estimate in Section \ref{h1f}.
  Even though such an $H^1$ estimate in Proposition \ref{TH1} leads to a loss of $\varepsilon^{-\frac{1}{2}}$, it is still sufficient for the $W^{1,\infty}$ estimate of $h^{\varepsilon}$ in the crucial bootstrap assumptions \eqref{assump}.
Noting the fact
 $$\|\nabla_pf^{\varepsilon}\|\lesssim \|\nabla_p{\bf P}f^{\varepsilon}\|+\|\nabla_p{\{\bf I-\bf P\}}f^{\varepsilon}\|,$$
 we estimate the norm $\|\nabla_p{\{\bf I-\bf P\}}f^{\varepsilon}\|$ instead of a direct estimation of $\|\nabla_pf^{\varepsilon}\|$ to  avoid estimates, such as the estimate related to the transport term
 \begin{equation}\label{transp}
 \langle (D_p\hat{p})\cdot \nabla_x f^{\varepsilon}, \nabla_pf^{\varepsilon}\rangle\lesssim \|D_pf^{\varepsilon}\|\|D_xf^{\varepsilon}\|,
 \end{equation}
which may lead to exponential growth of the $H^1$ norm of remainders. In fact, we proceed energy estimate after employing momentum differentiation $D_p$ to the equation of ${\{\bf I-\bf P\}}f^{\varepsilon}$ via micro projection onto the equation of $f^{\varepsilon}$. Thanks to the dissipation of $\frac{1}{\varepsilon}\nabla_p(Lf^{\varepsilon})$, instead of \eqref{transp}, we can obtain
 $$\langle (D_p\hat{p})\cdot \nabla_x f^{\varepsilon}, \nabla_p{\{\bf I-\bf P\}}f^{\varepsilon}\rangle\lesssim \frac{\kappa}{\varepsilon}\|D_p{\{\bf I-\bf P\}}f^{\varepsilon}\|^2+\frac{\varepsilon}{\kappa}\|D_xf^{\varepsilon}\|^2$$
for some small positive constant $\kappa>0$. Then $(\kappa/\varepsilon)\|D_p{\{\bf I-\bf P\}}f^{\varepsilon}\|^2$ can be absorbed by the dissipation term and  the factor $\varepsilon$ of $(\varepsilon/\kappa)\|D_xf^{\varepsilon}\|^2$ can kill the increase of time for $t\in [0,\frac{1}{2}|\ln\varepsilon|^{\frac{1}{3}}]$.

In Section \ref{mr}, we finally verify \eqref{assump} and close the energy estimates via a continuity argument for
$ 0\leq t\leq \frac{1}{2}|\ln\varepsilon|^{\frac{1}{3}}$. The proof of our main result Theorem \ref{result} is given.

Appendix 3 is devoted to construction of coefficients $F_{n}(t,x,p), E_{n}(t,x)$, $B_{n}(t,x), (1\leq n\leq 2k-1)$ in the Hilbert expansion \eqref{expan} and estimation of their regularities (see Theorem \ref{fn}). Due to the complex nature of the relativistic Boltzmann equation, the construction and estimates are delicate.

We remark that the time decay rate $(1+t)^{-\beta_0} (\beta_0>1)$ of  the solutions \cite{Guo-Ionescu-Pausader-JMP-2014}  $n_0(t,x)$, $u(t,x)$, $E_0(t,x)$ and $B_0(t,x)$ in \eqref{decay}  is important. As in \eqref{L200}, \eqref{Hx1000} and \eqref{Hx1001}, thanks to the time decay of $\|\nabla_xn_0\|_{\infty}$, $\|\nabla_xu\|_{\infty}$, $\|\nabla_xE_0\|_{\infty}$ and $\|\nabla_xB_0\|_{\infty}$, we obtain estimates such as $(1+t)^{-\beta_0}\|f^{\varepsilon}\|^2$, which is bound by Gr\"{o}nwall's inequality.


We briefly review works related to the relativistic Vlasov-Maxwell-Boltzmann system. For the two species relativistic Vlasov-Maxwell-Boltzmann system with $\varepsilon = 1$, global smooth solutions near a Maxwellian
was constructed \cite{Guo-Strain-CMP-2012} in a periodic box with new momentum regularity estimates. This result was extended to the whole space in \cite{Ma-Ma-APC-2015} and \cite{Liu-Xiao-KRM-2016} for short range interaction. There are more studies about the non-relativistic  Vlasov-Maxwell-Boltzmann system. In \cite{Guo-Invent-2003}, global classical solutions near a Maxwellian
were constructed in a periodic box. Since then, many works have been done in global existence and asymptotic stability of solutions, and hydrodynamic limits, such as \cite{Strain-CMP-2006, Jang-ARMA-2009,Duan-Strain-CPAM-2011, Arsenio-SaintR-CRMASP-2013, DLYZ-KRM-2013, HXXZ-JMP-2013, Lei-Zhao-JDE-2016, DLYZ-CMP-2017,Arsenio-SaintR-EMSMM-2019}.

\section{$L^2$ estimate for $f^{\varepsilon}$}\label{l2}

\setcounter{equation}{0}

In this section, we derive the $L^2$ energy estimates for the remainder $f^{\varepsilon}= \frac{F^{\varepsilon}_ R}{\sqrt{\mathbf{M}}}$
.

To perform the $L^2$ energy estimate, we first use \eqref{L2} to rewrite \eqref{remain} as
\begin{align}
& \partial_tf^{\varepsilon}+\hat{p}\cdot\nabla_xf^{\varepsilon}+\frac{u^0 }{2T_0}\hat{p}\sqrt{\mathbf{M}}\cdot E_R^{\varepsilon}-\frac{u \sqrt{\mathbf{M}}}{2 T_0}\cdot\Big(\hat{p} \times B_R^{\varepsilon} \Big)\nonumber\\
&-\Big(E_0+\hat{p} \times B_0 \Big)\cdot\nabla_pf^{\varepsilon}+\frac{Lf^{\varepsilon}}{\varepsilon}\nonumber\\
 &=-\frac{f^{\varepsilon}}{\sqrt{\mathbf{M}}}\Big[\partial_t+\hat{p}\cdot\nabla_x-\Big(E_0+\hat{p} \times B_0 \Big)\cdot\nabla_p\Big]\sqrt{\mathbf{M}}+\varepsilon^{k-1}\Gamma(f^{\varepsilon},f^{\varepsilon})\nonumber\\
 &+\sum_{i=1}^{2k-1}\varepsilon^{i-1}\big[\Gamma( \frac{F_i}{\sqrt{\mathbf{M}}}, f^{\varepsilon})
 +\Gamma(f^{\varepsilon},  \frac{F_i}{\sqrt{\mathbf{M}}})\big]+\varepsilon^k\Big(E_R^{\varepsilon}+\hat{p} \times B_R^{\varepsilon}\Big)\cdot\nabla_pf^{\varepsilon}\nonumber\\
 &-\varepsilon^k\frac{1 }{2 T_0}\Big(u^0\hat{p}-u\Big)\cdot\Big(E_R^{\varepsilon}+\hat{p} \times B_R^{\varepsilon}\Big)f^{\varepsilon}\label{L20}\\
 &+\sum_{i=1}^{2k-1}\varepsilon^i\Big[\Big(E_i+\hat{p} \times B_i \Big)\cdot\nabla_pf^{\varepsilon}
 + \Big(E_R^{\varepsilon}+\hat{p} \times B_R^{\varepsilon} \Big)\cdot\frac{\nabla_pF_i}{\sqrt{\mathbf{M}}}\Big]\nonumber\\
 &-\sum_{i=1}^{2k-1}\varepsilon^i\Big[\Big(E_i+\hat{p} \times B_i \Big)\cdot\frac{1}{2 T_0}\Big(u^0\hat{p}-u\Big)f^{\varepsilon}\Big]+\varepsilon^{k}\bar{A},
\nonumber
\end{align}
and
\begin{align}
&\partial_tE_R^{\varepsilon}-\nabla_x \times B_R^{\varepsilon}=\int_{\mathbb R^3} \hat{p}\sqrt{\mathbf{M}}f^{\varepsilon} dp, \nonumber\\
 &\partial_t B_R^{\varepsilon}+ \nabla_x \times E_R^{\varepsilon}=0,\label{L201}\\
& \nabla_x\cdot E_R^{\varepsilon}=-\int_{\mathbb R^3}  F_R^{\varepsilon} dp, \qquad \nabla_x\cdot  B_R^{\varepsilon}=0,\nonumber
\end{align}
where $\bar{A}=\frac{A}{\sqrt{\mathbf{M}}}$.

\begin{proposition}\label{L2ener}
For the remainders $(f^{\varepsilon}, E^{\varepsilon}_ R, B^{\varepsilon}_ R)$, it holds that
\begin{align}
&\frac{d}{dt}\Big(\Big\|\sqrt{\frac{2T_0}{u^0}}f^{\varepsilon}(t)\Big\|^2+\|[E^{\varepsilon}_ R(t), B^{\varepsilon}_ R(t)]\|^2\Big)\nonumber\\
&\hspace{0.5cm}+\Big(\frac{\delta_0T_M}{2\varepsilon}-C\varepsilon^{k-2}\|h^{\varepsilon}\|_{\infty}\Big)\|{\{\bf I-P\}}f^{\varepsilon}\|^2\nonumber\\
&\hspace{0.5cm}\lesssim \left[(1+t)^{-\beta_0}+\varepsilon^{k}\|h^{\varepsilon}\|_{\infty}+\varepsilon  {\mathcal I}_1(t)\right]
\left(\|f^{\varepsilon}\|^2+\|(E^{\varepsilon}_ R, B^{\varepsilon}_ R)\|^2\right)\label{L2ener0}\\
& \hspace{0.5cm}+\left[\varepsilon^{\frac{11}{4}}\|h^{\varepsilon}\|_{\infty}
+\varepsilon^{k}{\mathcal I}_2(t)\right]\|f^{\varepsilon}\|,\nonumber
\end{align}
where ${\mathcal I}_1$ and ${\mathcal I}_2$ have the following form:
\begin{align*}
{\mathcal I}_1(t)&=\sum_{i=1}^{2k-1}\varepsilon^{i-1} (1+t)^{i}\Big[1+\sum_{i=1}^{2k-1}\varepsilon^{i-1} (1+t)^{i}\Big],\\
{\mathcal I}_2(t)&=\sum_{1+2k\leq i+j\leq 4k-2}\varepsilon^{i+j-2k-1} (1+t)^{i+j}.
\end{align*}
\end{proposition}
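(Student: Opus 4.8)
The plan is a weighted $L^2$ energy estimate. I would test the reformulated remainder equation \eqref{L20} against the multiplier $\frac{8\pi k_B T_0}{u^0}f^{\varepsilon}$ in $L^2_{x,p}$, test the Maxwell system \eqref{L201} against $(E^{\varepsilon}_R,B^{\varepsilon}_R)$ in $L^2_x$, and add the two identities. The whole point of the weight $\frac{8\pi k_BT_0}{u^0}$ is that it turns the remainder–field coupling term $e_-\frac{u^0}{2k_BT_0}\hat p\sqrt{\mathbf{M}}\cdot E^{\varepsilon}_R$ on the left of \eqref{L20}, after pairing, into exactly $4\pi e_-\int_x E^{\varepsilon}_R\cdot\int_{\mathbb R^3}\hat p\sqrt{\mathbf{M}}f^{\varepsilon}\,dp\,dx$, since $\frac{u^0}{2k_BT_0}\cdot\frac{8\pi k_BT_0}{u^0}\equiv 4\pi$; this is precisely the current source on the right of the Maxwell equation for $E^{\varepsilon}_R$, so it cancels when the two identities are summed (the analogue of the energy identity for the coupled system). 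The curl terms in the Maxwell pairing cancel after one integration by parts in $x$. Its magnetic analogue $-e_-\frac{u\sqrt{\mathbf{M}}}{2k_BT_0}\cdot(\hat p\times B^{\varepsilon}_R)$ carries the extra factor $u/u^0$ and does \emph{not} cancel, but $\|u\|_\infty\lesssim\bar{\varepsilon}_0(1+t)^{-\beta_0}$ by \eqref{decay} makes it an error $\lesssim(1+t)^{-\beta_0}(\|f^{\varepsilon}\|^2+\|(E^{\varepsilon}_R,B^{\varepsilon}_R)\|^2)$.

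Next I would dispose of the structural terms on the left of \eqref{L20}. The $\partial_tf^{\varepsilon}$ term gives $\frac12\frac{d}{dt}\|\sqrt{8\pi k_BT_0/u^0}\,f^{\varepsilon}\|^2$ plus the commutator $-\frac12\langle\partial_t(\tfrac{8\pi k_BT_0}{u^0})f^{\varepsilon},f^{\varepsilon}\rangle$; the transport $c\hat p\cdot\nabla_xf^{\varepsilon}$ is skew up to the commutator $-\frac c2\langle\hat p\cdot\nabla_x(\tfrac{8\pi k_BT_0}{u^0}),(f^{\varepsilon})^2\rangle$; and since $\nabla_p\times\hat p=0$ one has $\nabla_p\cdot(E_0+\hat p\times B_0)=0$, so the background Lorentz transport $-e_-(E_0+\hat p\times B_0)\cdot\nabla_pf^{\varepsilon}$ integrates to zero against the $p$-independent multiplier. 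Both commutators involve only $\partial_t$ and $\nabla_x$ of $(n_0,u,T_0)$, which by \eqref{decay} (converting $\partial_t$ to $\nabla_x$ via \eqref{order01}) are $O(\bar{\varepsilon}_0(1+t)^{-\beta_0})$, hence go into the $(1+t)^{-\beta_0}\|f^{\varepsilon}\|^2$ term. The linearized term $\frac1\varepsilon\langle\tfrac{8\pi k_BT_0}{u^0}Lf^{\varepsilon},f^{\varepsilon}\rangle$ is handled by freezing $x$ and using $\langle Lf,f\rangle\ge\delta_0\|{\{\bf I-P\}}f\|^2$ together with the lower bound on $T_0/u^0$ from \eqref{temp} and \eqref{decay}, producing the dissipation $\frac{2\pi k_B\delta_0 T_M}{\varepsilon}\|{\{\bf I-P\}}f^{\varepsilon}\|^2$.

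It remains to estimate the right-hand side of \eqref{L20}. The first term $-\frac{f^{\varepsilon}}{\sqrt{\mathbf{M}}}[\partial_t+c\hat p\cdot\nabla_x-e_-(E_0+\hat p\times B_0)\cdot\nabla_p]\sqrt{\mathbf{M}}$ equals $\frac12$ times a first-order operator applied to $\log\mathbf{M}$, with coefficients controlled by first $(t,x)$-derivatives of $(n_0,u,T_0)$ and by $(E_0,B_0)$; handling the momentum weights through the ${\bf P}/{\{\bf I-P\}}$ decomposition and \eqref{decay}, it contributes $\lesssim(1+t)^{-\beta_0}\|f^{\varepsilon}\|^2$. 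For the quadratic term $\varepsilon^{k-1}\Gamma(f^{\varepsilon},f^{\varepsilon})$ I would use that $\Gamma(\cdot,\cdot)\in\mathcal{N}^\perp$ pointwise in $x$, so it pairs only with ${\{\bf I-P\}}f^{\varepsilon}$, estimate one slot in $L^\infty$ — converted to $\|h^{\varepsilon}\|_\infty$ via the weight $w\ge(1+|p|)^8$ and the ratio $\sqrt{J_{M}/\mathbf{M}}$ — and the other in $L^2$; Young's inequality against the $\varepsilon^{-1}$ dissipation then gives the loss $C\varepsilon^{k-2}\|h^{\varepsilon}\|_\infty\|{\{\bf I-P\}}f^{\varepsilon}\|^2$ on the left and $\varepsilon^k\|h^{\varepsilon}\|_\infty\|f^{\varepsilon}\|^2$ on the right. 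The Hilbert-coefficient terms $\varepsilon^{i-1}[\Gamma(F_i,f^{\varepsilon})+\Gamma(f^{\varepsilon},F_i)]$, $\varepsilon^ie_-(E^{\varepsilon}_R+\hat p\times B^{\varepsilon}_R)\cdot\nabla_pF_i$ and $-\varepsilon^i(E_i+\hat p\times B_i)\cdot\frac{e_-}{2k_BT_0}(u^0\hat p-u)f^{\varepsilon}$ are bounded using the time-growth and regularity of $F_i,E_i,B_i$ from Theorem \ref{fn} (at most $(1+t)^i$), again Young-splitting the $\Gamma(F_i,\cdot)$ pieces so that their $\varepsilon^{-1}$-singular parts, being $o(\varepsilon^{-1})$ on $[0,\varepsilon^{-1/2}]$, are reabsorbed by the dissipation; this yields the $\varepsilon\,{\mathcal I}_1(t)(\|f^{\varepsilon}\|^2+\|(E^{\varepsilon}_R,B^{\varepsilon}_R)\|^2)$ term. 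The forcing terms $\varepsilon^ie_-(E_i+\hat p\times B_i)\cdot\nabla_pf^{\varepsilon}$ and $\varepsilon^ke_-(E^{\varepsilon}_R+\hat p\times B^{\varepsilon}_R)\cdot\nabla_pf^{\varepsilon}$ vanish again by the divergence-free-in-$p$ structure. The pure source $\varepsilon^k\bar A$, built from $Q(F_i,F_j)$ and field$\,\times\nabla_pF_j$, is bounded by $\varepsilon^k{\mathcal I}_2(t)\|f^{\varepsilon}\|$ via Theorem \ref{fn}, and the damping term $-\varepsilon^k\frac{e_-\sqrt{\mathbf{M}}}{2k_BT_0}(u^0\hat p-u)\cdot(E^{\varepsilon}_R+\hat p\times B^{\varepsilon}_R)f^{\varepsilon}$ is handled by writing $\sqrt{\mathbf{M}}f^{\varepsilon}=F^{\varepsilon}_R$ with $|F^{\varepsilon}_R|\lesssim w^{-1}\sqrt{J_{M}}\,\|h^{\varepsilon}\|_\infty$, giving $\lesssim\varepsilon^k\|h^{\varepsilon}\|_\infty\|(E^{\varepsilon}_R,B^{\varepsilon}_R)\|\,\|f^{\varepsilon}\|$, which for $k\ge3$ fits the asserted $\varepsilon^{11/4}\|h^{\varepsilon}\|_\infty\|f^{\varepsilon}\|$ term. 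Summing the two identities cancels the $4\pi e_-\int E^{\varepsilon}_R\cdot\int\hat p\sqrt{\mathbf{M}}f^{\varepsilon}$ contributions, and \eqref{L2ener0} follows.

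The main obstacle is not any single step but the bookkeeping: forcing every term to collapse into precisely one of $(1+t)^{-\beta_0}$, $\varepsilon\,{\mathcal I}_1(t)$, $\varepsilon^k\|h^{\varepsilon}\|_\infty$ or $\varepsilon^k{\mathcal I}_2(t)$, which relies on the sharp time-growth and regularity estimates for the Hilbert coefficients of Theorem \ref{fn} (Appendix 3) and on the fact that every $\varepsilon^{-1}$-singular contribution — the nonlinearity $\varepsilon^{k-1}\Gamma(f^{\varepsilon},f^{\varepsilon})$ and the $\Gamma(F_i,f^{\varepsilon})$ pairings — can be absorbed by the $L$-dissipation without destroying it, which in turn forces the ${\bf P}/{\{\bf I-P\}}$ decomposition and the $L^2$–$L^\infty$ interplay through $\|h^{\varepsilon}\|_\infty$ to be used throughout. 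The one genuinely structural point is verifying the exact cancellation of the electric coupling via the choice of the multiplier $\frac{8\pi k_BT_0}{u^0}$, together with the observation that its magnetic analogue does not cancel and must instead be absorbed using the time-decay of $\|u\|_\infty$ from \eqref{decay}.
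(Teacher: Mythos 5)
Your overall plan — test \eqref{L20} against $\frac{8\pi k_BT_0}{u^0}f^{\varepsilon}$, test \eqref{L201} against $(E^{\varepsilon}_R,B^{\varepsilon}_R)$, add the two identities so that the electric coupling cancels the Maxwell source, and then run the ${\bf P}/\{{\bf I-P}\}$ decomposition and the $L^2$--$L^\infty$ interplay through $\|h^{\varepsilon}\|_\infty$ — is exactly the paper's route. Your observation that the $p$-divergence--free structure of the Lorentz force kills the pairings of $\varepsilon^i(E_i+\hat p\times B_i)\cdot\nabla_pf^{\varepsilon}$ and $\varepsilon^k(E^{\varepsilon}_R+\hat p\times B^{\varepsilon}_R)\cdot\nabla_pf^{\varepsilon}$ against the $p$-independent multiplier is a valid shortcut (the paper leaves these to a cited reference). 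But there is one genuine gap.

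You claim that the first source term
$-\tfrac{f^{\varepsilon}}{\sqrt{\mathbf{M}}}\bigl[\partial_t+c\hat p\cdot\nabla_x-e_-(E_0+\hat p\times B_0)\cdot\nabla_p\bigr]\sqrt{\mathbf{M}}$
contributes only $(1+t)^{-\beta_0}\|f^{\varepsilon}\|^2$ after the ${\bf P}/\{{\bf I-P}\}$ split and \eqref{decay}. It cannot. The symbol $\tfrac{1}{\sqrt{\mathbf{M}}}[\partial_t+c\hat p\cdot\nabla_x-\dots]\sqrt{\mathbf{M}}$ is a degree-one polynomial in $p$ with coefficients decaying like $(1+t)^{-\beta_0}$, so the pairing produces $(1+t)^{-\beta_0}\langle(1+|p|)f^{\varepsilon},f^{\varepsilon}\rangle$. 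On ${\bf P}f^{\varepsilon}$ the Gaussian structure absorbs the extra factor $(1+|p|)$, but on $\{{\bf I-P}\}f^{\varepsilon}$ there is no such structure, and $\langle(1+|p|)\{{\bf I-P}\}f^{\varepsilon},\{{\bf I-P}\}f^{\varepsilon}\rangle$ is simply not controlled by $\|\{{\bf I-P}\}f^{\varepsilon}\|^2$: for relativistic hard-ball the collision frequency $\nu$ is uniformly bounded above and below, so the $L$-dissipation is unweighted and cannot absorb the polynomial weight either. The resolution (this is exactly \eqref{L200}) is to split the momentum domain at $|p|=\sqrt{\kappa/\varepsilon}$: on $|p|\le\sqrt{\kappa/\varepsilon}$ the weight is $\lesssim\sqrt{\kappa/\varepsilon}\lesssim\kappa/\varepsilon$ and the contribution is absorbed by the dissipation (this is where part of the $\tfrac{4\pi k_B\delta_0T_M}{\varepsilon}$ coefficient is spent, producing the stated $\tfrac{2\pi k_B\delta_0T_M}{\varepsilon}$); on $|p|\ge\sqrt{\kappa/\varepsilon}$ one uses $(1+|p|)|f^{\varepsilon}|\le(1+|p|)^{-7}|h^{\varepsilon}|$ together with $\bigl(\int_{|p|\geq\sqrt{\kappa/\varepsilon}}(1+|p|)^{-14}dp\bigr)^{1/2}\lesssim(\varepsilon/\kappa)^{11/4}$, which is precisely the origin of the $\varepsilon^{11/4}\|h^{\varepsilon}\|_\infty\|f^{\varepsilon}\|$ term in \eqref{L2ener0}. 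That term does not come from the $\varepsilon^k$ damping term as you suggest; your estimate of the damping term carries an extra factor $\|(E^{\varepsilon}_R,B^{\varepsilon}_R)\|$, which after Young's inequality lands in the $\varepsilon^k\|h^{\varepsilon}\|_\infty(\|f^{\varepsilon}\|^2+\|(E^{\varepsilon}_R,B^{\varepsilon}_R)\|^2)$ bucket, not the $\varepsilon^{11/4}\|h^{\varepsilon}\|_\infty\|f^{\varepsilon}\|$ one. So your ledger omits one source entirely and misassigns another; the momentum-split needs to be added.
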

\begin{proof}
We take the $L^2$ inner product with $\frac{2T_0}{u^0}f^{\varepsilon}$ on both sides of \eqref{L20} to have
\begin{align}
& \frac{1}{2}\frac{d}{dt}\Big\|\sqrt{\frac{2T_0}{u^0}}f^{\varepsilon}(t)\Big\|^2+\Big\langle\hat{p}\cdot E_R^{\varepsilon}\sqrt{\mathbf{M}}, f^{\varepsilon}\Big\rangle
+\frac{2\delta_0}{\varepsilon}\Big\|\sqrt{\frac{T_0}{u^0}}{\{\bf I-P\}}f^{\varepsilon}\Big\|^2\nonumber\\
&\leq-\Big\langle\frac{f^{\varepsilon}}{\sqrt{\mathbf{M}}}\Big[\partial_t+\hat{p}\cdot\nabla_x-\Big(E_0+\hat{p} \times B_0 \Big)\cdot\nabla_p\Big]\sqrt{\mathbf{M}},\frac{2T_0}{u^0}f^{\varepsilon}\Big\rangle\nonumber\\
&+\frac{1}{2}\Big\langle\Big[\Big(\partial_t+\hat{p}\cdot\nabla_x\Big)\Big(\frac{2T_0}{u^0}\Big)\Big]f^{\varepsilon}, f^{\varepsilon}\Big\rangle\nonumber\\
 &+ \Big\langle u \cdot\Big(\hat{p} \times B_R^{\varepsilon} \Big)\sqrt{\mathbf{M}},f^{\varepsilon}\Big\rangle+\varepsilon^{k-1}\Big\langle\Gamma(f^{\varepsilon},f^{\varepsilon}),\frac{2T_0}{u^0}f^{\varepsilon}\Big\rangle\nonumber\\
 &+\sum_{i=1}^{2k-1}\varepsilon^{i-1}\Big\langle\big[\Gamma( \frac{F_i}{\sqrt{\mathbf{M}}}, f^{\varepsilon})+\Gamma(f^{\varepsilon}, \frac{F_i}{\sqrt{\mathbf{M}}})\big],\frac{2T_0}{u^0}f^{\varepsilon}\Big\rangle\nonumber\\
 &-\varepsilon^k\Big\langle \Big(u^0\hat{p}-u\Big)\cdot\Big(E_R^{\varepsilon}+\hat{p} \times B_R^{\varepsilon}\Big)f^{\varepsilon},f^{\varepsilon}\Big\rangle\label{remain0}\\
 &+\sum_{i=1}^{2k-1}\varepsilon^i\Big\langle \Big[\Big(E_i+\hat{p} \times B_i \Big)\cdot\nabla_pf^{\varepsilon}+ \Big(E_R^{\varepsilon}+\hat{p} \times B_R^{\varepsilon} \Big)\cdot\frac{\nabla_pF_i}{\sqrt{\mathbf{M}}}\Big],\frac{2T_0}{u^0}f^{\varepsilon}\Big\rangle\nonumber\\
 &-\sum_{i=1}^{2k-1}\varepsilon^i\Big\langle \Big[\Big(E_i+\hat{p} \times B_i \Big)\cdot\frac{1 }{ T_0}\Big(u^0\hat{p}-u\Big)f^{\varepsilon}\Big],\frac{2T_0}{u^0}f^{\varepsilon}\Big\rangle+\varepsilon^{k}\Big\langle\bar{A},\frac{2T_0}{u^0}f^{\varepsilon}\Big\rangle.
\nonumber
\end{align}
On the other hand, from \eqref{L201}, we have
\begin{align}\label{L2max}
\frac{1}{2}\frac{d}{dt}\left(\|E_R^{\varepsilon}(t)\|^2+\|B_R^{\varepsilon}(t)\|^2\right)=\Big\langle\hat{p}\cdot E_R^{\varepsilon}\sqrt{\mathbf{M}}, f^{\varepsilon}\Big\rangle.
\end{align}
Noting \eqref{temp}, we combine \eqref{remain0} and \eqref{L2max} to have
\begin{align}
& \frac{1}{2}\frac{d}{dt}\Big(\Big\|\sqrt{\frac{2T_0}{u^0}}f^{\varepsilon}(t)\Big\|^2+\|[E^{\varepsilon}_ R(t), B^{\varepsilon}_ R(t)]\|^2\Big)+\frac{\delta_0T_M}{\varepsilon}\|{\{\bf I-P\}}f^{\varepsilon}\|^2\nonumber\\
&\leq-\Big\langle\frac{f^{\varepsilon}}{\sqrt{\mathbf{M}}}\Big[\partial_t+\hat{p}\cdot\nabla_x-\Big(E_0+\hat{p} \times B_0 \Big)\cdot\nabla_p\Big]\sqrt{\mathbf{M}},\frac{2T_0}{u^0}f^{\varepsilon}\Big\rangle\nonumber\\
&+\frac{1}{2}\Big\langle\Big[\Big(\partial_t+\hat{p}\cdot\nabla_x\Big)\Big(\frac{2T_0}{u^0}\Big)\Big]f^{\varepsilon},f^{\varepsilon}\Big\rangle\nonumber\\
 &+ \Big\langle u \cdot\Big(\hat{p} \times B_R^{\varepsilon} \Big)\sqrt{\mathbf{M}},f^{\varepsilon}\Big\rangle+\varepsilon^{k-1}\Big\langle\Gamma(f^{\varepsilon},f^{\varepsilon}),\frac{2T_0}{u^0}f^{\varepsilon}\Big\rangle\nonumber\\
 &+\sum_{i=1}^{2k-1}\varepsilon^{i-1}\Big\langle\big[\Gamma( \frac{F_i}{\sqrt{\mathbf{M}}}, f^{\varepsilon})+\Gamma(f^{\varepsilon}, \frac{F_i}{\sqrt{\mathbf{M}}})\big],\frac{2T_0}{u^0}f^{\varepsilon}\Big\rangle\nonumber\\
 &-\varepsilon^k\Big\langle \Big(u^0\hat{p}-u\Big)\cdot\Big(E_R^{\varepsilon}+\hat{p} \times B_R^{\varepsilon}\Big)f^{\varepsilon},f^{\varepsilon}\Big\rangle\label{L202}\\
 &+\sum_{i=1}^{2k-1}\varepsilon^i\Big\langle \Big[\Big(E_i+\hat{p} \times B_i \Big)\cdot\nabla_pf^{\varepsilon}+ \Big(E_R^{\varepsilon}+\hat{p} \times B_R^{\varepsilon} \Big)\cdot\frac{\nabla_pF_i}{\sqrt{\mathbf{M}}}\Big],\frac{2T_0}{u^0}f^{\varepsilon}\Big\rangle\nonumber\\
 &-\sum_{i=1}^{2k-1}\varepsilon^i\Big\langle \Big[\Big(E_i+\hat{p} \times B_i \Big)\cdot\frac{1} { T_0}\Big(u^0\hat{p}-u\Big)f^{\varepsilon}\Big],\frac{2T_0}{u^0}f^{\varepsilon}\Big\rangle+\varepsilon^{k}\Big\langle\bar{A},\frac{2T_0}{u^0}f^{\varepsilon}\Big\rangle.
\nonumber
\end{align}
Now we estimate the terms in the right hand of \eqref{L202}. For brevity, we only treat the first, fourth and fifth terms since the remained terms can be estimated in the same way as Proposition $3.1$ in \cite{Guo-Jang-CMP-2010}.
Note that $\frac{1}{\sqrt{\mathbf{M}}}[\partial_t+\hat{p}\cdot\nabla_x-(E_0+\hat{p} \times B_0 )\cdot\nabla_p]\sqrt{\mathbf{M}}$ is a linear  polynomial of $p$, $(1+|p|)|f^{\varepsilon}|\leq (1+|p|)^{-7}|h^{\varepsilon}|$ from \eqref{temp}, and
$$\Big(\int_{|p|\geq\sqrt{\frac{\kappa}{\varepsilon}}}(1+|p|)^{-7\times2}dp\Big)^{1/2}\lesssim\Big(\frac{\varepsilon}{\kappa}\Big)^{\frac{11}{4}},$$
where $\kappa$ is a sufficiently small positive constant.
 Then the first term  can be estimated as follows:
\begin{align}
&-\Big\langle\frac{f^{\varepsilon}}{\sqrt{\mathbf{M}}}\Big[\partial_t+\hat{p}\cdot\nabla_x-\Big(E_0+\hat{p} \times B_0 \Big)\cdot\nabla_p\Big]\sqrt{\mathbf{M}},\frac{2T_0}{u^0}f^{\varepsilon}\Big\rangle\nonumber\\
&\hspace{0.5cm}\lesssim \left(\|\nabla_xn_0\|_{\infty}+\|\nabla_xu\|_{\infty}+\|\nabla_xE_0\|_{\infty}+\|\nabla_xB_0\|_{\infty}\right)
\|f^{\varepsilon}\|^2\nonumber\\
&\hspace{0.5cm}+(\|\nabla_xn_0\|+\|\nabla_xu\|+\|\nabla_xE_0\|+\|\nabla_xB_0\|)\nonumber\\
&\hspace{0.5cm}\times\|(1+|p|){\{\bf I-P\}}f^{\varepsilon}\|_{L^{\infty}_xL^2_p}\|{\{\bf I-P\}}f^{\varepsilon}\|\label{L200}\\
&\hspace{0.5cm}\lesssim (1+t)^{-\beta_0}\|f^{\varepsilon}\|^2+\|{\{\bf I-P\}}f^{\varepsilon}\|\nonumber\\
&\hspace{0.5cm}\times\|(1+|p|){\{\bf I-P\}}f^{\varepsilon}({\bf 1}_{|p|\geq\sqrt{\frac{\kappa}{\varepsilon}}}+{\bf 1}_{|p|\leq\sqrt{\frac{\kappa}{\varepsilon}}})\|_{L^{\infty}_xL^2_p}\|{\{\bf I-P\}}f^{\varepsilon}\|\nonumber\\
&\hspace{0.5cm}\lesssim (1+t)^{-\beta_0}\|f^{\varepsilon}\|^2+\frac{\kappa}{\varepsilon}\|{\{\bf I-P\}}f^{\varepsilon}\|^2+\varepsilon^{\frac{11}{4}}\|h^{\varepsilon}\|_{\infty}\|f^{\varepsilon}\|.\nonumber
\end{align}
By Theorem 2 in \cite{Guo-Strain-CMP-2012}, we can estimate the fourth term  as
\begin{align*}
&\varepsilon^{k-1}\Big\langle\Gamma(f^{\varepsilon},f^{\varepsilon}),\frac{2T_0}{u^0}f^{\varepsilon}\Big\rangle\nonumber\\
&=\varepsilon^{k-1}\Big\langle\Gamma(f^{\varepsilon},f^{\varepsilon}),\frac{2T_0}{u^0}{\{\bf I-P\}}f^{\varepsilon}\Big\rangle\nonumber\\
&\lesssim \varepsilon^{k-1}\|h^{\varepsilon}\|_{\infty} \|f^{\varepsilon}\|\|{\{\bf I-P\}}f^{\varepsilon}\|\\
&\lesssim \varepsilon^{k-2}\|h^{\varepsilon}\|_{\infty} \|{\{\bf I-P\}}f^{\varepsilon}\|^2+\varepsilon^{k}\|h^{\varepsilon}\|_{\infty} \|f^{\varepsilon}\|^2.\nonumber
\end{align*}
For the fifth term, we use Theorem 2 in \cite{Guo-Strain-CMP-2012} and \eqref{growth0} in Theorem \ref{fn} to obtain
\begin{align*}
&\sum_{i=1}^{2k-1}\varepsilon^{i-1}\Big\langle\Big[\Gamma( \frac{F_i}{\sqrt{\mathbf{M}}}, f^{\varepsilon})+\Gamma(f^{\varepsilon}, \frac{F_i}{\sqrt{\mathbf{M}}})\big],\frac{2T_0}{u^0}f^{\varepsilon}\Big\rangle\nonumber\\
&\hspace{0.5cm}=\sum_{i=1}^{2k-1}\varepsilon^{i-1}\Big\langle\big[\Gamma( \frac{F_i}{\sqrt{\mathbf{M}}}, f^{\varepsilon})+\Gamma(f^{\varepsilon},  \frac{F_i}{\sqrt{\mathbf{M}}})\big],\frac{2T_0}{u^0}{\{\bf I-P\}}f^{\varepsilon}\Big\rangle\nonumber\\
&\hspace{0.5cm}\lesssim\sum_{i=1}^{2k-1}\varepsilon^{i-1}\|F_i\|_{L^{\infty}_xL^2_p}\|f^{\varepsilon}\|\|{\{\bf I-P\}}f^{\varepsilon}\|\\
&\hspace{0.5cm}\lesssim \frac{\kappa}{\varepsilon}\|{\{\bf I-P\}}f^{\varepsilon}\|^2+\varepsilon\Big(\sum_{i=1}^{2k-1}\varepsilon^{i-1} (1+t)^{i}\Big)^2\|f^{\varepsilon}\|^2.\nonumber
\end{align*}

\end{proof}

\section{Characteristics and $L^{\infty}$ estimates}\label{cl}

\setcounter{equation}{0}

In this Section, we first establish the characteristic estimates under the a priori assumptions \eqref{assump}.
Then the $L^{\infty}$ estimate of $h^{\varepsilon}$ will be presented.

\subsection{Characteristics  estimates}
In this subsection, we will provide several characteristics estimates for the relativistic Vlasov-Maxwell-Boltzmann system. To this aim, we need a uniform $W^{1,\infty}$ estimate for the electro-magnetic field $[E^{\varepsilon}(t,x), B^{\varepsilon}(t,x)]$.

We first note that, under the a priori assumptions \eqref{assump},
 \begin{align}\label{uniEB}
& \sup_{t\in[0,T]}\|[E^{\varepsilon}(t), B^{\varepsilon}(t)]\|_{W^{1,\infty}}\nonumber\\
&\hspace{0.5cm}\leq  \sum_{i=0}^{2k-1}\varepsilon^i \|[E_i(t), B_i(t)]\|_{W^{1,\infty}}+\varepsilon^k\|[E^{\varepsilon}_ R(t), B^{\varepsilon}_ R(t)]\|_{W^{1,\infty}}\lesssim 1,
 \end{align}
 for $T\in [0,\frac{1}{2}|\ln\varepsilon|^{\frac{1}{3}}]$ and $k\geq5$.
With the uniform estimate \eqref{uniEB}, we can study the characteristics defined in \eqref{chara1}.

\begin{lemma}\label{charc} Let  $T\in [0,\frac{1}{2}|\ln\varepsilon|^{\frac{1}{3}}]$ and assumptions in \eqref{assump} hold. Then there exists a small constant $\bar{T}\in [0,T]$ such that for $\tau\in[0, \bar{T}]$,
\begin{align}
|D_pX(\tau)|&\lesssim \frac{|t-\tau|}{p^0},\label{Xp}\\
\frac{d^2 D_pX(\tau; t, x, p)}{d\tau^2}&\lesssim \frac{1}{(p^0)^2},\label{Xptt}\\
\frac{1}{2(p^0)^5}|t-\tau|^3&\leq\left|\det\left(\frac{\partial X(\tau)}{\partial p}\right)\right|\leq\frac{2}{(p^0)^5}|t-\tau|^3.\label{detXp}
\end{align}
\end{lemma}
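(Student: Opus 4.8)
The plan is to prove the three estimates by differentiating the characteristic ODE system \eqref{chara1} in $p$ and integrating back from $\tau = t$, using the uniform bound \eqref{uniEB} on $[E^{\varepsilon}, B^{\varepsilon}]$ in $W^{1,\infty}$ as the sole input controlling the force field. First I would record the variational equations: writing $\partial_pX$ and $\partial_pP$ for a generic first momentum derivative, differentiation of \eqref{chara1} gives
\begin{align*}
\frac{d}{d\tau}\partial_pX(\tau)&=c\,\partial_p\hat{P}(\tau)=c\,(D_P\hat{P})(\tau)\cdot\partial_pP(\tau),\\
\frac{d}{d\tau}\partial_pP(\tau)&=-\big(D_xE^{\varepsilon}+\hat{P}\times D_xB^{\varepsilon}\big)\cdot\partial_pX(\tau)-\big(D_P\hat{P}\big)\cdot\partial_pP(\tau)\times B^{\varepsilon},
\end{align*}
with initial data at $\tau = t$ given by $\partial_pX(t) = 0$ and $\partial_pP(t) = \mathrm{Id}$ (or $0$, depending on which derivative). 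The key elementary fact I would isolate first is the relativistic bound $|D_P\hat{P}| \lesssim (P^0)^{-1}$, together with $|D_P^2\hat{P}| \lesssim (P^0)^{-2}$, and the observation that along the flow $P^0(\tau) \approx p^0$ on a short interval $[0, \bar T]$ — this follows because $|\frac{d}{d\tau}P^0| = |\hat{P}\cdot\frac{dP}{d\tau}| \le |E^{\varepsilon}| + |B^{\varepsilon}| \lesssim 1$ by \eqref{uniEB}, so $|P^0(\tau) - p^0| \lesssim |t - \tau| \le \bar T$, hence $P^0 \approx p^0$ for $\bar T$ small.

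Next, for \eqref{Xp}: since $\partial_pP$ is bounded (Grönwall on the coupled linear system, whose coefficients are $O(1)$ uniformly, gives $|\partial_pP(\tau)|, |\partial_pX(\tau)| \lesssim 1$ on $[0,\bar T]$), I integrate the $X$-equation from $t$: $|\partial_pX(\tau)| \le c\int_\tau^t |D_P\hat P||\partial_pP|\,ds \lesssim \frac{c}{p^0}|t-\tau|$, using $P^0 \approx p^0$ and $|\partial_pP| \lesssim 1$. For \eqref{Xptt}, differentiate the $X$-equation twice in $\tau$: $\frac{d^2}{d\tau^2}\partial_pX = c\frac{d}{d\tau}\big((D_P\hat P)\partial_pP\big)$, expand by the product rule into a term with $\frac{dP}{d\tau}\cdot(D_P^2\hat P)\cdot\partial_pP$ (size $\lesssim (p^0)^{-2}$ since $|\frac{dP}{d\tau}|\lesssim 1$, $|D_P^2\hat P|\lesssim (P^0)^{-2}$) and a term $(D_P\hat P)\frac{d}{d\tau}\partial_pP$; the latter uses the $\partial_pP$-equation, and the dangerous-looking piece $(D_P\hat P)\cdot(D_xE^{\varepsilon}+\cdots)\partial_pX$ is in fact $\lesssim (p^0)^{-1}\cdot 1\cdot (c p^{0-1}|t-\tau|) \lesssim (p^0)^{-2}$ on the short interval — so the whole second derivative is $\lesssim c(p^0)^{-2}$ as claimed.

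Finally, \eqref{detXp} is the crux and the main obstacle. The idea is that the leading behavior of $\partial_pX(\tau)$ for small $|t-\tau|$ is the free-flow Jacobian: from \eqref{Xp}-\eqref{Xptt} and $\partial_pX(t)=0$, Taylor expansion in $(\tau - t)$ gives $\partial_pX(\tau) = (\tau - t)\,c\,D_p\hat{p} + O\big(c(p^0)^{-2}(t-\tau)^2\big)$, where $D_p\hat p = \frac{1}{p^0}\big(\mathrm{Id} - \hat p\otimes\hat p\big)$ is the free-streaming matrix whose determinant is exactly $\frac{m^2c^2}{(p^0)^4}\cdot\frac{1}{p^0} = \frac{m^2c^2}{(p^0)^{?}}$ — more precisely $\det(D_p\hat p) = \frac{(mc)^2}{(p^0)^5}$ after using $1 - |\hat p|^2 = \frac{m^2c^2}{(p^0)^2}$. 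Hence $\det\big(\partial_pX(\tau)\big) = (\tau-t)^3\big[\det(c\,D_p\hat p) + O\big((p^0)^{-1}|t-\tau|\big)\big] = (t-\tau)^3\big[\tfrac{c^3 m^2 c^2}{(p^0)^5}(1 + O(\sqrt\varepsilon))\big]$, wait — the cleaner bookkeeping is: factor $(t-\tau)^3$ out, show the remaining $3\times 3$ matrix is $c\,D_p\hat p$ up to an error of relative size $O\big((t-\tau)/p^0 \cdot p^0\big) = O(t-\tau)$, i.e. $O(\bar T)$, and since the eigenvalues of $c\,D_p\hat p$ are $\approx \frac{c}{p^0}$ (two of them) and $\approx \frac{mc\cdot c}{(p^0)^{?}}$... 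I would instead bound $\det$ from below and above directly by choosing $\bar T$ small enough that the perturbation matrix has operator norm at most, say, $\frac14$ of the smallest singular value of the leading matrix, then use multiplicativity/continuity of $\det$. Concretely: write $\partial_pX(\tau) = (t-\tau)\,M(\tau)$ with $M(\tau) \to c\,D_p\hat p$ as $\tau\to t$ and $|M(\tau) - c D_p\hat p| \lesssim (p^0)^{-2}|t-\tau| + (\text{Grönwall corrections}) \lesssim \frac{\bar T}{p^0}\cdot\frac{1}{p^0}$, hmm the scaling needs care — the honest statement is that $p^0 M(\tau)$ is a bounded matrix converging to $c(\mathrm{Id}-\hat p\otimes\hat p)$ with error $O(\bar T)$, so $\det(p^0 M(\tau)) = c^3\det(\mathrm{Id}-\hat p\otimes\hat p) + O(\bar T) = \frac{m^2 c^5}{(p^0)^2} + O(\bar T)$; choosing $\bar T$ small relative to $\inf \frac{m^2c^5}{(p^0)^2}$... but $p^0$ is unbounded, so the error must be measured relatively. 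I would therefore be careful to keep all estimates in the form "error $\le \frac12 \times$ main term" with the $(p^0)$-powers tracked throughout, using $P^0\approx p^0$ repeatedly; this relative-error control, valid on a single short interval $[0,\bar T]$ with $\bar T$ depending only on the universal constants in \eqref{uniEB} and $|D_P^j\hat P|\lesssim (P^0)^{-j}$, is exactly what yields the two-sided bound $\frac{m^2c^3}{2(p^0)^5}|t-\tau|^3 \le |\det(\partial_pX)| \le \frac{2m^2c^3}{(p^0)^5}|t-\tau|^3$. The main obstacle, then, is organizing the perturbative expansion of the Jacobian so that the remainder is genuinely lower-order \emph{uniformly in $p$} (i.e. with the correct negative powers of $p^0$), rather than merely small for bounded momenta; everything else is Grönwall on an explicitly linear system with $O(1)$ coefficients.
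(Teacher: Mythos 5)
Your treatment of \eqref{Xp} and \eqref{Xptt} matches the paper's in all essentials: both differentiate the characteristic ODE in $p$, both use the uniform $W^{1,\infty}$ control on $(E^{\varepsilon},B^{\varepsilon})$ and the near-constancy of $P^0(\tau)\approx p^0$ on $[0,\bar T]$, and both integrate back from $\tau=t$. The paper does a short bootstrap (bound $\partial_pP$ in terms of $\sup\partial_pX$, bound $\partial_pX$ by integrating the $X$-equation, re-insert to close), whereas you invoke Gr\"onwall directly on the coupled linear system; that is a cosmetic difference.

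The gap is in \eqref{detXp}. You correctly flag the danger (``$p^0$ is unbounded, so the error must be measured relatively''), but your proposed resolution — write $\partial_pX(\tau)=(t-\tau)M(\tau)$, show the error matrix has operator norm at most $\tfrac14$ of the smallest singular value of $cD_p\hat p$, then invoke continuity of $\det$ — does not actually work. The smallest singular value of $cD_p\hat p=\tfrac{c}{p^0}(\mathbf I-\hat p\otimes\hat p)$ is $\tfrac{c}{p^0}(1-|\hat p|^2)=\tfrac{m^2c^3}{(p^0)^3}$, while the entries of the error matrix $\tfrac{\tau-t}{2}\tfrac{d^2\partial_pX}{d\tau^2}$ are only bounded (by your \eqref{Xptt}) by $\tfrac{c|t-\tau|}{(p^0)^2}$; their ratio is $\tfrac{|t-\tau|\,p^0}{m^2c^2}$, which blows up as $|p|\to\infty$ no matter how small $\bar T$ is. The same issue appears in the adjugate expansion $\det(A+B)=\det A+\operatorname{tr}(\operatorname{adj}(A)B)+\cdots$: the largest entries of $\operatorname{adj}(cD_p\hat p)$ are $\sim c^2\hat p_i\hat p_j/(p^0)^2$, so the cross term is a priori only $\lesssim c^3|t-\tau|/(p^0)^4$, which is \emph{not} $o\!\left(m^2c^3/(p^0)^5\right)$ uniformly in $p$.

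What actually rescues the estimate, and what the paper does after the Taylor expansion, is an algebraic cancellation that your plan never identifies. Substituting the explicit expression for $\tfrac{d^2\partial_{p_i}X_j}{d\tau^2}$ into the adjugate contraction $\sum_{i,j}\tfrac{m^2c^2\delta_{ij}+p_ip_j}{(p^0)^4}\tfrac{d^2\partial_{p_i}X_j}{d\tau^2}$, the single term that is not already suppressed by an extra factor of $|t-\tau|$, namely the Lorentz-force piece $-[\partial_{p_i}\hat P\times B^{\varepsilon}]_j/P^0$, contracts to something degenerately small: $m^2c^2\,\delta_{ij}\epsilon_{jkl}\partial_{p_i}\hat P_k B^{\varepsilon}_l$ vanishes by antisymmetry, and the $p_ip_j$ piece uses $\hat p_i\partial_{p_i}\hat P\parallel\hat p$ so that $[\hat p\times B^{\varepsilon}]\cdot\hat p=0$. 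After this ``long computation'' the paper arrives at the form \eqref{tt2}, where every competitor to the leading $m^2c^3$ carries an explicit factor $\tfrac{c|\tau-t|}{P^0(\tau)}$ times a quantity bounded uniformly in $p$ by \eqref{uniEB}; only then is the error genuinely $\le\tfrac12$ of the main term for all $p$ once $\bar T$ is small. Without isolating this cancellation, the claimed two-sided bound on the Jacobian cannot be obtained from your estimates alone.
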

\begin{proof} We first prove \eqref{Xp}.
For $i, j=1,2,3$, we differentiate \eqref{chara1} w.r.t. $p_i$  to have
\begin{align}\label{xpi1}
\frac{d \partial_{p_i}X_j(\tau; t, x, p)}{d\tau}&=\Big(\frac{(P^0)^2\partial_{p_i}P_j-P_j(P\cdot\partial_{p_i}P)}{(P^0)^3}\Big)(\tau; t, x, p),
\end{align}
and
\begin{align}\label{ppi1}
&\frac{d \partial_{p_i}P_j(\tau; t, x, p)}{d\tau}\nonumber\\
&=-[\nabla_xE_j^{\varepsilon}\cdot\partial_{p_i}X](\tau, X(\tau; t, x, p))\nonumber\\
&\hspace{0.5cm}-\Big[\hat{P}(\tau; t, x, p)\times [\nabla_xB^{\varepsilon}\cdot\partial_{p_i}X](\tau, X(\tau; t, x, p))\Big]_j\\
&\hspace{0.5cm}-\Big[\Big(\frac{(P^0)^2\partial_{p_i}P-P(P\cdot\partial_{p_i}P)}{(P^0)^3}\Big)(\tau; t, x, p)\times B^{\varepsilon}(\tau, X(\tau; t, x, p))\Big]_j.\nonumber
\end{align}
Note that $\partial_{p_i}P_j(t; t, x, p)=\delta_{ij}$.
We integrate \eqref{ppi1} over $[t, \tau]$ and use \eqref{uniEB} to get
\begin{align*}
 \|\partial_{p_i}P_j(\tau)\|_{\infty}&\leq \delta_{ij}+C|\tau-t|\Big(\sup_{\tau\in[0,\bar{T}]}\|\partial_{p_i}X(\tau)\|_{\infty}
 +\frac{\sup_{\tau\in[0,\bar{T}]}\|\partial_{p_i}P(\tau)\|_{\infty}}{\inf_{\tau\in[0,\bar{T}]}\|P^0(\tau)\|_{\infty}}\Big).
\end{align*}
Then, for sufficiently small $\bar{T}$, it holds that
\begin{align}\label{ppi3}
 \|\partial_{p_i}P_j(\tau)\|_{\infty}&\leq \frac{5}{4}\delta_{ij}+C|\tau-t|\sup_{\tau\in[0,\bar{T}]}\|\partial_{p_i}X(\tau)\|_{\infty}.
\end{align}
Noting that $\partial_{p_i}X_j(t; t, x, p)=0$, and for some $\bar{\tau} $ between $\tau$ and $t$,
\begin{align}\label{p0tt}
P^0(\tau)=&p^0+(\tau-t)\frac{d P}{d\tau}(\bar{\tau}; t, x, p)
\leq p^0+C|\tau-t|,
\end{align}
we integrate \eqref{xpi1} over $[t, \tau]$ and use \eqref{p0tt} to obtain
\begin{align}\label{xpi2}
\|\partial_{p_i}X_j(\tau)\|_{\infty}\leq \frac{2|t-\tau|}{\inf_{\tau\in[0,\bar{T}]}\|P^0(\tau)\|_{\infty}}\lesssim \frac{|t-\tau|}{p^0},
\end{align}
for $\bar{T}$ small enough. Inserting \eqref{xpi2} into \eqref{ppi3} yields
\begin{align}\label{ppij}
 \|\partial_{p_i}P_j(\tau)\|_{\infty}\leq 2,
\end{align}
for sufficiently small $\bar{T}$. Moreover, from \eqref{chara1}, one has
\begin{align}
\frac{d^2 \partial_{p_i}X_j(\tau)}{d\tau^2}=&\partial_{p_i}\Big(\frac{1}{P^0}\frac{dP_j}{d\tau}
-\frac{P_j}{(P^0)^3}\frac{dP}{d\tau}\cdot P\Big)(\tau)\nonumber\\
=&\partial_{p_i}\Big[\frac{-E^{\varepsilon}_j-[\hat{P}\times B^{\varepsilon}]_j}{P^0}+\frac{P_jP\cdot E^{\varepsilon}}{(P^0)^3}\Big](\tau)\nonumber\\
=&\Big\{\frac{-\nabla_xE^{\varepsilon}_j\cdot \partial_{p_i}X-[\hat{P}\times (\nabla_xB^{\varepsilon}\cdot\partial_{p_j}X)]_j-[\partial_{p_i}\hat{P}\times B^{\varepsilon}]_j}{P^0}\label{ijtt}\\
&+\frac{\partial_{p_i}P_jP\cdot E^{\varepsilon}+P_j\partial_{p_i}P\cdot E^{\varepsilon}+P_jP\cdot (\nabla_xE^{\varepsilon}\cdot \partial_{p_i} X)}{(P^0)^3}\nonumber\\
&+\frac{E^{\varepsilon}_j+[\hat{P}\times B^{\varepsilon}]_j}{(P^0)^3}P\cdot\partial_{p_i}P-\frac{3P_jP\cdot E^{\varepsilon}(P\cdot\partial_{p_i}P)}{(P^0)^5}\Big\}(\tau)\nonumber.
\end{align}
Then we use \eqref{p0tt}, \eqref{xpi2} and \eqref{ppij} to bound $\frac{d^2 \partial_{p_i}X_j(\tau)}{d\tau^2}$ by $\frac{C}{(p^0)^2}(1+|t-\tau|)$. \eqref{Xptt} follows for small $\bar{T}$.

Finally, we prove \eqref{detXp}. Expand $\partial_{p_i}X_j(\tau)$ at $t$ to have
\begin{align}
\partial_{p_i}X_j(\tau)=&\partial_{p_i}X_j(t)+\frac{d \partial_{p_i}X_j(\tau; t, x, p)}{d\tau}\Big|_{\tau=t}(\tau-t)+\frac{(\tau-t)^2}{2}\frac{d^2 \partial_{p_i}X_j(\bar{\tau})}{d\tau^2}\nonumber\\
=&(\tau-t)\frac{(p^0)^2\delta_{ij}-p_ip_j}{(p^0)^3}+\frac{(\tau-t)^2}{2}\Big\{\frac{-\nabla_xE^{\varepsilon}_i\cdot \partial_{p_j}X}{P^0}\label{pxij}\\
&-\frac{[\hat{P}\times (\nabla_xB^{\varepsilon}\cdot\partial_{p_j}X)]_j+[\partial_{p_j}\hat{P}\times B^{\varepsilon}]_j}{P^0}\nonumber\\
&+\frac{\partial_{p_i}P_jP\cdot E^{\varepsilon}+P_j\partial_{p_i}P\cdot E^{\varepsilon}+P_jP\cdot (\nabla_xE^{\varepsilon}\cdot \partial_{p_i} X)}{(P^0)^3}\nonumber\\
&+\frac{E^{\varepsilon}_j+[\hat{P}\times B^{\varepsilon}]_j}{(P^0)^3}P\cdot\partial_{p_i}P-\frac{3P_jP\cdot E^{\varepsilon}(P\cdot\partial_{p_i}P)}{(P^0)^5}\Big\}(\bar{\tau})\nonumber.
\end{align}
Then, we have
\begin{align}
\left|\det\left(\frac{\partial X(\tau)}{\partial p}\right)\right|=&|\tau-t|^3
\left|\det\left(\frac{\bf I}{(p^0)^3}-\frac{p\otimes p}{(p^0)^5}+\frac{\tau-t}{2}\frac{d^2 \partial_{p_i}X_j(\bar{\tau})}{d\tau^2}\right)\right|\nonumber\\
=&|\tau-t|^3\left|\frac{1}{(p^0)^5}+\frac{\tau-t}{2}\sum_{ i,j=1}^3\frac{\delta_{ij}+p_ip_j}{(p^0)^4}
\frac{d^2 \partial_{p_i}X_j(\bar{\tau})}{d\tau^2}\right.\nonumber\\
&\left.+\frac{O(1)|\tau-t|^2}{p^0}\Big\|\frac{d^2 \partial_{p}X(\bar{\tau})}{d\tau^2}\Big\|_{\infty}^2+O(1)|\tau-t|^3\Big\|\frac{d^2 \partial_{p}X(\bar{\tau})}{d\tau^2}\Big\|_{\infty}^3\right|. \nonumber
\end{align}
Here $\bf I$ is the $3\times3$ identity matrix. By \eqref{p0tt} and \eqref{ijtt}, we can estimate $\left|\det\left(\frac{\partial X(\tau)}{\partial p}\right)\right|$ as
\begin{equation}\label{tt1}
|\tau-t|^3\left|[1+O(1)|\tau-t|^2]^2\frac{1}{(p^0)^5}+\frac{\tau-t}{2}\sum_{ i,j=1}^3\frac{\delta_{ij}+p_ip_j}{(p^0)^4}
\frac{d^2 \partial_{p_i}X_j(\bar{\tau})}{d\tau^2}\right|.
\end{equation}
Inserting the expression of $\partial_{p_i}X_j(\tau)$ in \eqref{pxij} into \eqref{tt1}, after a long computation, we can further
estimate $\left|\det\left(\frac{\partial X(\tau)}{\partial p}\right)\right|$ as
\begin{align}\label{tt2}
\begin{aligned}
&\frac{|\tau-t|^3}{(p^0)^5}\Big|[1+O(1)|\tau-t|^2]^2+\frac{(\tau-t)}{2p^0}\Big[\hat{P}\cdot E^{\varepsilon}\\
&-\nabla_x\cdot E^{\varepsilon}+\sum_{ i,j=1}^3\hat{P}_i\hat{P}_j\partial_{x_i}E^{\varepsilon}_j+\hat{P}\cdot(\nabla_x\times B^{\varepsilon})\Big](\bar{\tau})\Big|.
\end{aligned}
\end{align}
Then we combine \eqref{tt2} and \eqref{p0tt}, and choose $\bar{T}$ sufficiently small to obtain \eqref{detXp}.

\end{proof}

\subsection{ $L^{\infty}$  estimate of $h^{\varepsilon}$}  As in \cite{Guo-Jang-CMP-2010}, we will establish the $L^{\infty}$ norm of $h^{\varepsilon}$ by the characteristic method. Note that, from \eqref{remain} and \eqref{Linfty}, $h^{\varepsilon}$ satisfies
 \begin{align}
& \partial_th^{\varepsilon}+\hat{p}\cdot\nabla_xh^{\varepsilon}-\Big(E^{\varepsilon}+\hat{p} \times B^{\varepsilon} \Big)\cdot\nabla_ph^{\varepsilon}+\frac{\nu(\mathbf{M})h^{\varepsilon}}{\varepsilon}+\frac{\bar{K}(h^{\varepsilon})}{\varepsilon}\nonumber\\
 &=\sum_{i=1}^{2k-1}\varepsilon^{i-1}\frac{w}{\sqrt{J_{M}}}[Q( F_i, \frac{\sqrt{J_{M}}}{w}h^{\varepsilon})
 +Q(\frac{\sqrt{J_{M}}}{w}h^{\varepsilon}, F_i)]\nonumber\\
 &+\varepsilon^{k-1}\frac{w}{\sqrt{J_{M}}}Q(\frac{\sqrt{J_{M}}}{w}h^{\varepsilon},\frac{\sqrt{J_{M}}}{w}h^{\varepsilon})
 +\Big(E^{\varepsilon}+\hat{p} \times B^{\varepsilon} \Big)\cdot\nabla_p\Big(\frac{\sqrt{J_{M}}}{w}\Big)h^{\varepsilon}
 \nonumber\\
 &+\Big(E_R^{\varepsilon}+\hat{p} \times B_R^{\varepsilon}\Big)\cdot\frac{w}{\sqrt{J_{M}}}\nabla_p\Big(\mathbf{M}+\sum_{i=1}^{2k-1}\varepsilon^iF_i \Big)+\varepsilon^{k}\frac{w}{\sqrt{J_{M}}}A,
\nonumber
\end{align}
 where $\bar{K}(h^{\varepsilon})=\frac{w(|p|)\sqrt{\mathbf{M}}}{\sqrt{J_{M}}}K_{\mathbf{M}}(\frac{\sqrt{J_{M}}}{w\sqrt{\mathbf{M}}}h^{\varepsilon})$. With the characteristic equations in \eqref{chara1}, the corresponding characteristic estimates in Lemma \ref{charc}, and the estimates for the kernel of the operator $\bar{K}$ in Lemma \ref{k12}, we can
 obtain the following proposition.

\begin{proposition}\label{linfty} For given $T\in [0,\frac{1}{2}|\ln\varepsilon|^{\frac{1}{3}}]$, assume the crucial bootstrap assumptions \eqref{assump}. Then, for sufficiently small $\varepsilon$, we have
\begin{align*}
\sup_{0\leq t\leq T}\Big(\varepsilon^{\frac{3}{2}}\|h^{\varepsilon}(t)\|_{\infty}\Big)\lesssim& \varepsilon^{\frac{3}{2}}\|h^{\varepsilon}(0)\|_{\infty}+\sup_{0\leq t\leq T}\Big(\varepsilon^{\frac{5}{2}}\|[E^{\varepsilon}_R(t), B^{\varepsilon}_R(t)]\|_{\infty}\Big)+\sup_{0\leq t\leq T}\|f^{\varepsilon}(t)\|+\varepsilon^{k+\frac{3}{2}}.
\end{align*}
\end{proposition}
\begin{proof} The proof is quite similar to the corresponding $L^{\infty}$ estimate of  $h^{\varepsilon}$ in subsection 5.1 of \cite{Guo-Jang-CMP-2010}. The main difference is that $\nabla_x\phi$ is replaced by $E+\hat{p} \times B$. In \cite{Guo-Jang-CMP-2010}, the electric potential $\phi$ satisfies the Poisson equation and $\|\nabla_x\phi\|_{\infty}$ can be controlled by $\|h^{\varepsilon}\|_{\infty}$. However, in our case, the $L^{\infty}$ norm of the electromagnetic field $\|[E^{\varepsilon}_R, B^{\varepsilon}_R]\|_{\infty}$ can't be bounded by $\|h^{\varepsilon}\|_{\infty}$. This is the very reason  that the term $\sup_{0\leq t\leq T}\Big(\varepsilon^{\frac{5}{2}}\|[E^{\varepsilon}_R(t), B^{\varepsilon}_R(t)]\|_{\infty}\Big)$ appears in the upper bound of $h^{\varepsilon}$. In fact, it arises from the estimate of the term $\big(E_R^{\varepsilon}+\hat{p} \times B_R^{\varepsilon}\big)\cdot\frac{w}{\sqrt{J_{M}}}\nabla_p\Big(\mathbf{M}+\sum_{i=1}^{2k-1}\varepsilon^iF_i \Big)$ in the equation of $h^{\varepsilon}$. More explicitly, corresponding to the seventh term in \cite[equation (5.9)]{Guo-Jang-CMP-2010}, we have
$$\big|\frac{w}{\sqrt{J_{M}}}\nabla_p\Big(\mathbf{M}+\sum_{i=1}^{2k-1}\varepsilon^iF_i \Big)\big|\lesssim 1+ {\mathcal I}_1(t)$$
by \eqref{growth0} in Theorem \ref{fn} and can further obtain
\begin{eqnarray*}
&&
\int_0^t\exp\Big\{-\frac{1}{\varepsilon}\int_s^t\nu(\tau)d\tau\Big\}\Big|\big(E_R^{\varepsilon}+\hat{p} \times B_R^{\varepsilon}\big)\cdot\frac{w}{\sqrt{J_{M}}}\nabla_p\Big(\mathbf{M}+\sum_{i=1}^{2k-1}\varepsilon^iF_i \Big)\Big|(s, X(s),P(s))\,ds\nonumber\\
&&\lesssim \int_0^t\exp\Big\{-\frac{\nu_0 (t-s)}{\varepsilon}\Big\}\big[1+ \varepsilon{\mathcal I}_1(s)\big]\,ds\sup_{0\leq s\leq t}\Big(\|[E^{\varepsilon}_R(s), B^{\varepsilon}_R(s)]\|_{\infty}\Big)\\
&&\lesssim \varepsilon\big[1+ {\mathcal I}_1(t)\big]\sup_{0\leq s\leq t}\Big(\|[E^{\varepsilon}_R(s), B^{\varepsilon}_R(s)]\|_{\infty}\Big),\nonumber
\end{eqnarray*}
 where we used $\nu(\tau)\geq \nu_0$ for some uniform positive constant $\nu_0$. We further apply similar arguments in \cite{Guo-Jang-CMP-2010} to complete the proof.

\end{proof}

\section{$W^{1,\infty}$ estimates for $(E^{\varepsilon}_R, B^{\varepsilon}_R)$}\label{w1eb}

\setcounter{equation}{0}
In this section, we use the well-known Glassey-Strauss Representation in \cite{Glassey-Strauss-ARMA-1986} to perform $W^{1,\infty}$ estimates for $(E^{\varepsilon}_R, B^{\varepsilon}_R)$.
\begin{proposition}\label{TWEB}
For $t\in [0, \frac{1}{2}|\ln\varepsilon|^{\frac{1}{3}}]$, remainders $(E^{\varepsilon}_R, B^{\varepsilon}_R)$ satisfy
\begin{align}
&\|[E^{\varepsilon}_R(t), B^{\varepsilon}_R(t)]\|_{W^{1,\infty}}\nonumber\\
&\lesssim
|\ln \varepsilon|^{\frac{1}{3}}\int_0^t\|[E^{\varepsilon}_R(\tau), B^{\varepsilon}_R(\tau)]\|_{W^{1,\infty}}\,d\tau+\frac{|\ln \varepsilon|^{\frac{1}{3}}}{\varepsilon}\int_0^t\|h^{\varepsilon}(\tau)\|_{W^{1,\infty}}\, d\tau\label{TWEB1}\\
 &+\varepsilon^{-\frac{17}{8}}|\ln \varepsilon|^2\big(\|[E^{\varepsilon}_R(0), B^{\varepsilon}_R(0)]\|_{W^{1,\infty}}+\|h^{\varepsilon}(0)\|_{\infty}+\varepsilon^k\big)+\varepsilon^{-\frac{29}{8}}|\ln \varepsilon|^2\sup_{0\leq s\leq t}\|f^{\varepsilon}(t)\|.\nonumber
\end{align}
\end{proposition}
\begin{proof}
Proposition \ref{TWEB} is a combination of Proposition \ref{LEBe} and Proposition \ref{WEBe}.
\end{proof}

\subsection{$L^{\infty}$ estimate for $(E^{\varepsilon}_R, B^{\varepsilon}_R)$} Define an operator $S$ by
$$S=\partial_t+\hat{p}\cdot\nabla_x.$$
The remainder of the electro-magnetic  field $(E^{\varepsilon}_R, B^{\varepsilon}_R)$ can be explicitly expressed in terms of $F^{\varepsilon}_R$ and $SF^{\varepsilon}_R$ as follows:
\begin{lemma}\cite{Glassey-Strauss-ARMA-1986}\label{LinftyEB} For $i=1,2,3$, one has
\begin{align}\label{LEB0}
&4\pi E_R^{\varepsilon,i}(t,x)=\big(E_R^{\varepsilon,i}\big)_0(t,x)+E_{R,T}^{\varepsilon,i}(t,x)+E_{R,S}^{\varepsilon,i}(t,x),\nonumber\\
&4\pi B_R^{\varepsilon,i}(t,x)=\big(B_R^{\varepsilon,i}\big)_0(t,x)+B_{R,T}^{\varepsilon,i}(t,x)+B_{R,S}^{\varepsilon,i}(t,x),
\end{align}
where $\big(E_R^{\varepsilon,i}\big)_0(t,x)$ and $\big(B_R^{\varepsilon,i}\big)_0(t,x)$ are initial data of $E^{\varepsilon,i}_R$ and $B^{\varepsilon,i}_R$, and
\begin{align*}
&E_{R,T}^{\varepsilon,i}(t,x)=-\iint\limits_{|x-y|\leq t}\frac{dpdy}{|y-x|^2}\frac{(\omega_i+\hat{p}_i)(1-|\hat{p}|^2)}{(1+\hat{p}\cdot\omega)^2}F^{\varepsilon}_R(t-|y-x|,y,p),\\
&E_{R,S}^{\varepsilon,i}(t,x)=-\iint\limits_{|x-y|\leq t}\frac{dpdy}{|y-x|}\frac{(\omega_i+\hat{p}_i)}{1+\hat{p}\cdot\omega}(SF^{\varepsilon}_R)(t-|y-x|,y,p),\\
&B_{R,T}^{\varepsilon,i}(t,x)=\iint\limits_{|x-y|\leq t}\frac{dpdy}{|y-x|^2}\frac{(\omega\times\hat{p})_i(1-|\hat{p}|^2)}{(1+\hat{p}\cdot\omega)^2}F^{\varepsilon}_R(t-|y-x|,y,p),\\
&B_{R,S}^{\varepsilon,i}(t,x)=\iint\limits_{|x-y|\leq t}\frac{dpdy}{|y-x|}\frac{(\omega\times\hat{p})_i}{1+\hat{p}\cdot\omega}(SF^{\varepsilon}_R)(t-|y-x|,y,p),
\end{align*}
with $\omega=\frac{x-y}{|x-y|}$.

\end{lemma}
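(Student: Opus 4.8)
The statement to prove is Lemma~\ref{LinftyEB}, the Glassey--Strauss representation of $(E_R^\varepsilon, B_R^\varepsilon)$ in terms of $F_R^\varepsilon$ and $SF_R^\varepsilon$. Since the paper attributes this formula to \cite{Glassey-Strauss-ARMA-1986}, the plan is to derive it directly from the Maxwell system \eqref{L201} by treating the sources $4\pi e_-\int \hat p \sqrt{\mathbf M} f^\varepsilon\,dp$ and $-4\pi e_-\int F_R^\varepsilon\,dp$ as given, and applying the classical Glassey--Strauss integration-by-parts identity. First I would recall that, with all physical constants normalized to unity, the Maxwell equations \eqref{L201} imply that each component $E_R^{\varepsilon,i}$ and $B_R^{\varepsilon,i}$ satisfies a wave equation $\Box E_R^{\varepsilon,i} = (\text{source involving }\partial_t, \nabla_x \text{ of the charge/current moments})$, with the $t$-derivative of the current and the spatial gradient of the charge density appearing on the right. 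The standard fundamental-solution (Kirchhoff) representation then expresses $E_R^{\varepsilon,i}(t,x)$ as the contribution of the initial data $E_R^{\varepsilon,i}(0,x)$ (together with $\partial_t E_R^{\varepsilon,i}(0,x)$, which by the first Maxwell equation is itself expressible through $\nabla_x\times B_R^\varepsilon$ and the current, hence absorbed into the data term) plus a retarded integral over the backward light cone $|x-y|\le t$ of the source.

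The key step is the Glassey--Strauss manipulation that converts the $(t,x)$-derivatives falling on $F_R^\varepsilon$ inside the retarded integral into: (i) a term with no derivative on $F_R^\varepsilon$ but an extra power of $|x-y|^{-1}$ (the ``$T$'' terms $E_{R,T}^{\varepsilon,i}$, $B_{R,T}^{\varepsilon,i}$, carrying the factor $(1-|\hat p|^2)/(1+\hat p\cdot\omega)^2$ and the $|y-x|^{-2}$ weight), and (ii) a term involving only the transport derivative $SF_R^\varepsilon = (\partial_t + \hat p\cdot\nabla_x)F_R^\varepsilon$ (the ``$S$'' terms $E_{R,S}^{\varepsilon,i}$, $B_{R,S}^{\varepsilon,i}$, with weight $(\omega_i+\hat p_i)/(1+\hat p\cdot\omega)$ and $|y-x|^{-1}$). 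The mechanism is the algebraic identity, valid on the light cone $\tau = t-|x-y|$, that decomposes an arbitrary momentum-space gradient $\nabla_p$ acting on the retarded argument, plus the observation $\partial_t + \nabla_x\cdot(\cdot) = S + (\text{tangential-to-cone pieces})$, so that integrating by parts in $p$ (which has no boundary contribution since the integrand decays) and along the cone surface produces exactly the two weight functions $(\omega_i+\hat p_i)/(1+\hat p\cdot\omega)$ and its $\nabla_\omega$-type derivative $\propto (1-|\hat p|^2)/(1+\hat p\cdot\omega)^2$. I would carry this out componentwise for $E_R^{\varepsilon,i}$ using $\partial_t E_R^{\varepsilon,i} - (\nabla_x\times B_R^\varepsilon)_i = 4\pi e_-\int\hat p_i\sqrt{\mathbf M}f^\varepsilon\,dp$ together with $\nabla_x\cdot E_R^\varepsilon = -4\pi e_-\int F_R^\varepsilon\,dp$, and analogously for $B_R^{\varepsilon,i}$ using the two remaining Maxwell equations; the cross-product structure $(\omega\times\hat p)_i$ in the $B$-formulas is the direct image of the curl.

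The main obstacle is bookkeeping rather than conceptual: one must verify that all the boundary terms genuinely vanish (at the tip of the cone $y=x$, where $|y-x|^{-2}$ is singular, and at $|p|\to\infty$), that the contribution of $\partial_t E_R^{\varepsilon,i}(0,x)$ really does collapse into the displayed data term $E_R^{\varepsilon,i}(0,x)$ after using the Maxwell constraints at $t=0$, and that the decomposition $\partial_t + \nabla_x\cdot = S + \text{(cone-tangential)}$ is applied with the correct signs so that the $S$-terms come out with weight $(\omega_i+\hat p_i)/(1+\hat p\cdot\omega)$ exactly. Since this is precisely the computation of \cite[Theorems~3--4]{Glassey-Strauss-ARMA-1986} applied verbatim to the source $\bigl(4\pi e_-\int\hat p\sqrt{\mathbf M}f^\varepsilon\,dp,\ -4\pi e_-\int F_R^\varepsilon\,dp\bigr)$ in place of their $(\int\hat p f\,dp, \int f\,dp)$, the cleanest exposition is to state that the formula follows by applying that reference's representation to the Maxwell system \eqref{L201}, noting that the only difference is the explicit form of the charge and current densities in terms of $F_R^\varepsilon = \sqrt{\mathbf M}f^\varepsilon$, which does not affect the derivation of the kernel structure.
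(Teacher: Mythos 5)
Your proposal is correct and takes essentially the same route as the paper, which offers no proof of its own and simply cites Theorems 3--4 of Glassey--Strauss, applied to the Maxwell system \eqref{L201} with the charge/current densities $\bigl(-4\pi e_-\int F_R^\varepsilon\,dp,\ 4\pi e_-\int\hat p F_R^\varepsilon\,dp\bigr)$; your concluding paragraph makes exactly this observation. One small caution: the ``data term'' $E_R^{\varepsilon,i}(0,x)$ in \eqref{LEB0} is a mild abuse of notation for the full data contribution in the Glassey--Strauss representation (a homogeneous wave solution built from $E_R^\varepsilon(0,\cdot)$, $\partial_t E_R^\varepsilon(0,\cdot)$, and a cone-boundary integral of $F_R^\varepsilon(0,\cdot,\cdot)$), not the literal value $E_R^{\varepsilon,i}(0,x)$; your remark about re-expressing $\partial_t E_R^\varepsilon(0,\cdot)$ through the Maxwell constraints is the right step, but it does not collapse the data term to a single evaluated function.
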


From the kinetic equation of $F^{\varepsilon}_R$ in \eqref{remain}, we can obtain the upper bound of the electro-magnetic field.

\begin{proposition}\label{LEBe}
For $t\in [0, \frac{1}{2}|\ln\varepsilon|^{\frac{1}{3}}]$, the remainders $E^{\varepsilon}_R$ and $B^{\varepsilon}_R$ can be estimated as
\begin{align}
\|[E^{\varepsilon}_R(t), B^{\varepsilon}_R(t)]\|_{\infty}\lesssim &\varepsilon^{-\frac{9}{8}}|\ln \varepsilon|\big(\|[E^{\varepsilon}_R(0), B^{\varepsilon}_R(0)]\|_{\infty}+\|h^{\varepsilon}(0)\|_{\infty}+\varepsilon^k\big)\nonumber\\
&+\varepsilon^{-\frac{21}{8}}|\ln \varepsilon|\sup_{0\leq s\leq t}\|f^{\varepsilon}(t)\|.\label{LEB1}
\end{align}
\end{proposition}
\begin{proof}
Since the estimate of $B^{\varepsilon}_R$ can be done in a same way, we only deal with $E^{\varepsilon}_R$.
We first estimate $E_{R,T}^{\varepsilon,i}$ in \eqref{LEB0}.  Note that
\begin{align*}
&1-|\hat{p}|^2=(1+|\hat{p}|)(1-|\hat{p}|)\leq 2(1-|\hat{p}|)\lesssim (1+|p|)^{-2},\\
&(1+\hat{p}\cdot\omega)^{-m}\leq (1-|\hat{p}|)^{-m}\lesssim (1+|p|)^{2m},\\
&\Big|\nabla_p\Big(\frac{\omega_i+\hat{p}_i}{1+\hat{p}\cdot\omega}\Big)\Big|\lesssim \frac{|\nabla_p\hat{p}_i|}{1+\hat{p}\cdot\omega}+ \frac{|\nabla_p\hat{p}|}{(1+\hat{p}\cdot\omega)^2}\lesssim (1+|p|)^3
\end{align*}
 for any $m\geq0$ and $i=1, 2, 3$.  Then we have
\begin{equation*}
|E_{R,T}^{\varepsilon,i}|\lesssim t \sup_{s\in[0,t]}\|h^{\varepsilon}(s)\|_{\infty}\int_{\mathbb{R}^3}(1+|p|)^2\frac{\sqrt{J_{M}}}{w(|p|)}dp\lesssim t \sup_{s\in[0,t]}\|h^{\varepsilon}(s)\|_{\infty}.
\end{equation*}
For $E_{R,S}^{\varepsilon,i}$, we obtain from the definition of operator $S$ and \eqref{remain} that
\begin{align}
E_{R,S}^{\varepsilon,i}(t,x)=&-\iint\limits_{|x-y|\leq t}\frac{dpdy}{|y-x|}\frac{(\omega_i+\hat{p}_i)}{1+\hat{p}\cdot\omega}\Big[(E_R^{\varepsilon}+\hat{p} \times B_R^{\varepsilon} )\cdot\nabla_pF^{\varepsilon}\Big](t-|y-x|,y,p)\nonumber\\
&-\iint\limits_{|x-y|\leq t}\frac{dpdy}{|y-x|}\frac{(\omega_i+\hat{p}_i)}{1+\hat{p}\cdot\omega}\Big[\sum_{j=0}^{2k-1}(E_j+\hat{p} \times B_j )\cdot\nabla_pF_R^{\varepsilon}\Big](t-|y-x|,y,p)\label{LEB2}\\
 &-\frac{1}{\varepsilon}\iint\limits_{|x-y|\leq t}\frac{dpdy}{|y-x|}\frac{(\omega_i+\hat{p}_i)}{1+\hat{p}\cdot\omega}[Q(F_R^{\varepsilon},F^{\varepsilon})+Q(F^{\varepsilon},F_R^{\varepsilon})](t-|y-x|,y,p)\nonumber\\
 &-\varepsilon^{k}\iint\limits_{|x-y|\leq t}\frac{dpdy}{|y-x|}\frac{(\omega_i+\hat{p}_i)}{1+\hat{p}\cdot\omega}A(t-|y-x|,y,p)
.\nonumber
\end{align}
Now we estimate terms in \eqref{LEB2}. By the a priori assumptions \eqref{assump} and \eqref{growth0} in Theorem \ref{fn}, we have
\begin{align}\label{Fpl}
&\Big\|\Big|\nabla_p\Big(\frac{\omega_i+\hat{p}_i}{1+\hat{p}\cdot\omega}\Big)\Big|F ^{\varepsilon}\Big\|_{L^1_p}\nonumber\\
\lesssim &\int_{\mathbb R^3}  (1+|p|)^3 \Big(F_0+\sum_{j=1}^{2k-1}\varepsilon^j|F_j|+\varepsilon^k\frac{\sqrt{J_{M}}}{w}|h^{\varepsilon}|\Big)\, dp\\
\lesssim & 1+\sum_{j=1}^{2k-1}\varepsilon^j(1+t)^j+\varepsilon^k\sup_{t\in [0, \frac{1}{2}|\ln\varepsilon|^{\frac{1}{3}}]}\|h^{\varepsilon}(t)\|_{\infty}\lesssim 1  \nonumber.
\end{align}
Then, for the first term in the right hand side of \eqref{LEB2}, we integrate by parts w.r.t. $p$ and let $\tau=t-|y-x|$ to bound it by
\begin{align*}
&\int\limits_{|x-y|\leq t}\frac{dy}{|y-x|}\Big[|E_R^{\varepsilon}|+| B_R^{\varepsilon}| \Big\|\Big|\nabla_p\Big(\frac{\omega_i+\hat{p}_i}{1+\hat{p}\cdot\omega}\Big)\Big|F ^{\varepsilon}\Big\|_{L^1_p}\Big](t-|y-x|,y)\\
&\lesssim \int_0^t (t-\tau)\|[E_R^{\varepsilon}(\tau), B_R^{\varepsilon}(\tau)]\|_{\infty}\, d\tau .
\end{align*}
Similarly, the upper bound of the second line in  \eqref{LEB2} is
\begin{align*}
&\int\limits_{|x-y|\leq t}\frac{dy}{|y-x|}\Big[\sum_{j=0}^{2k-1}\varepsilon^j(|E_j|+| B_j| )| \Big\|\Big|\nabla_p\Big(\frac{\omega_i+\hat{p}_i}{1+\hat{p}\cdot\omega}\Big)\Big|\frac{\sqrt{J_{M}}}{w}\Big\|_{L^1_p}\|h^{\varepsilon}\|_{L^{\infty}_p}\Big](t-|y-x|,y)
\\
&\lesssim  \int_0^t\Big[(1+\tau)^{-\beta_0}+\sum_{j=1}^{2k-1}\varepsilon^j(1+\tau )^j\Big]  (t-\tau)\|h^{\varepsilon}(\tau)]\|_{\infty}\, d\tau
\lesssim  \int_0^t(t-\tau)\|h^{\varepsilon}(\tau)]\|_{\infty}\, d\tau
\end{align*}
by  \eqref{growth0} in Theorem \ref{fn} again.
Note  that
$$(1+|p|)\sqrt{\mathbf{M}}(p)\lesssim 1, \quad \sqrt{\mathbf{M}}(p'))\sqrt{\mathbf{M}}(q')\leq \sqrt{\mathbf{M}}(p), $$
by \eqref{conservations}, and
\begin{align}\label{Fpj}
 & \Big\|Q(\frac{F_R^{\varepsilon}}{\sqrt{\mathbf{M}}}, \frac{F^{\varepsilon}}{\sqrt{\mathbf{M}}})\|_{L^1_p}\lesssim \Big\|\frac{\sqrt{J_{M}} h^{\varepsilon}}{w\sqrt{\mathbf{M}}}\Big\|_{L^1_p} \Big\|\frac{F^{\varepsilon}}{\sqrt{\mathbf{M}}}\Big\|_{L^1_p}  \nonumber\\
 \lesssim & \| h^{\varepsilon}\|_{L^{\infty}_p} \Big(\|\sqrt{\mathbf{M}}\|_{L^1_p}+\sum_{j=0}^{2k-1}\varepsilon^j \Big\|\frac{F_j}{\sqrt{\mathbf{M}}}\Big\|_{L^1_p} +\varepsilon^k\Big\|\frac{\sqrt{J_{M}} h^{\varepsilon}}{w\sqrt{\mathbf{M}}}\Big\|_{L^1_p}\Big)\\
 \lesssim & \| h^{\varepsilon}\|_{L^{\infty}_p} \Big(1+\sum_{j=0}^{2k-1}\varepsilon^j (1+t)^j +\varepsilon^k\| h^{\varepsilon}\|_{\infty}\Big)\lesssim \| h^{\varepsilon}\|_{L^{\infty}_p} \nonumber
\end{align}
by the a priori assumptions \eqref{assump} and \eqref{growth0} in Theorem \ref{fn}.
The third line in  \eqref{LEB2} can be controlled by
 \begin{align*}
&\frac{1}{\varepsilon}\int\limits_{|x-y|\leq t}\frac{dy}{|y-x|}\sup_{p\in \mathbb R^3}\big[(1+|p|)^2\sqrt{\mathbf{M}}(p)\big]
\Big[\Big\|Q(\frac{F_R^{\varepsilon}}{\sqrt{\mathbf{M}}}, \frac{F^{\varepsilon}}{\sqrt{\mathbf{M}}})\Big\|_{L^1_p}\Big](t-|y-x|,y)\\
&\lesssim \frac{1}{\varepsilon}\int\limits_{|x-y|\leq t}\frac{dy}{|y-x|}\|h^{\varepsilon}(t-|y-x|,y)\|_{L^{\infty}_p}\lesssim \frac{1}{\varepsilon}\int_0^t(t-\tau)\|h^{\varepsilon}(\tau)]\|_{\infty}\, d\tau
\end{align*}
via letting $\tau=t-|y-x|$.
It is easy to obtain that the fourth line in \eqref{LEB2} can be estimated by $\varepsilon^k (1+t)^{2k+3}$.
We collect the estimates above to get
\begin{align*}
\|[E^{\varepsilon}_R(t), B^{\varepsilon}_R(t)]\|_{\infty}\lesssim &t \sup_{s\in[0,t]}\|h^{\varepsilon}(s)\|_{\infty}+\int_0^t (t-\tau)\|[E_R^{\varepsilon}(\tau), B_R^{\varepsilon}(\tau)]\|_{\infty}\, d\tau\nonumber\\
&+\frac{1}{\varepsilon}\int_0^t(t-\tau)\|h^{\varepsilon}(\tau)]\|_{\infty}\, d\tau+\varepsilon^k (1+t)^{2k+3}.
\end{align*}
Then, there is some constant $C_1>1$ such that for $t\in [0, \frac{1}{2}|\ln \varepsilon|^{\frac{1}{3}}]$, we can combine the above inequality and Proposition \ref{linfty} to have
\begin{align*}
&\sup_{0\leq s\leq t}\|[E^{\varepsilon}_R(s), B^{\varepsilon}_R(s)]\|_{\infty}\\
\leq &C_1\Big[\|[E^{\varepsilon}_R(0), B^{\varepsilon}_R(0)]\|_{\infty}+\Big(t+\frac{t^2}{\varepsilon}\Big)\|h^{\varepsilon}(0)\|_{\infty}+\varepsilon^{k-1}(1+ t)^2\\
&+\varepsilon^{-\frac{5}{2}}(1+ t)^2\sup_{0\leq s\leq t}\|f^{\varepsilon}(t)\|+t\int_0^t \sup_{0\leq \tau\leq s}\|[E_R^{\varepsilon}(\tau), B_R^{\varepsilon}(\tau)]\|_{\infty}\, d s\Big]\\
\leq  &C_1\Big[\|[E^{\varepsilon}_R(0), B^{\varepsilon}_R(0)]\|_{\infty}+\Big(t+\frac{t^2}{\varepsilon}\Big)\|h^{\varepsilon}(0)\|_{\infty}+\varepsilon^{k-1} (1+ t)^2\\
&+\varepsilon^{-\frac{5}{2}}(1+ t)^2\sup_{0\leq s\leq t}\|f^{\varepsilon}(t)\|\Big]+\frac{1}{4}|\ln \varepsilon|^{\frac{2}{3}}\int_0^t \sup_{0\leq \tau\leq s}\|[E_R^{\varepsilon}(\tau), B_R^{\varepsilon}(\tau)]\|_{\infty}\, d s,
\end{align*}
where we used $C_1\leq \frac{1}{2}|\ln \varepsilon|^{\frac{1}{3}}$ when $\varepsilon$ is sufficiently small. We further apply Gr\"{o}nwall's inequality to obtain that for $t\in [0, \frac{1}{2}|\ln \varepsilon|^{\frac{1}{3}}]$,
\begin{align*}
&\sup_{0\leq s\leq t}\|[E^{\varepsilon}_R(s), B^{\varepsilon}_R(s)]\|_{\infty}\\
\leq  &C_1\varepsilon^{-\frac{1}{8}}\Big[\|[E^{\varepsilon}_R(0), B^{\varepsilon}_R(0)]\|_{\infty}+\frac{|\ln \varepsilon|}{\varepsilon}\|h^{\varepsilon}(0)\|_{\infty}+\varepsilon^{k-1} |\ln \varepsilon|+\varepsilon^{-\frac{5}{2}}|\ln \varepsilon|\sup_{0\leq s\leq t}\|f^{\varepsilon}(t)\|\Big].
\end{align*}
\end{proof}

\subsection{$W^{1,\infty}$ estimates for $(E^{\varepsilon}_R, B^{\varepsilon}_R)$}
The gradient of the remainder of the electro-magnetic  field $(E^{\varepsilon}_R, B^{\varepsilon}_R)$ can be explicitly expressed in the following:
\begin{lemma}\cite{Glassey-Strauss-ARMA-1986}\label{W1EB} For $i,j=1,2,3$, one has
\begin{align}
&\partial_{x_j} E_R^{\varepsilon,i}(t,x)=\partial_{x_j}\big(E_R^{\varepsilon,i}\big)_0(t,x)+\iint\limits_{|x-y|\leq t}\frac{dpdy}{|y-x|^3}a(\omega,\hat{p})F^{\varepsilon}_R(t-|y-x|,y,p)\nonumber\\
&\hspace{0.5cm}+O(1)+\iint\limits_{|x-y|\leq t}\frac{dpdy}{|y-x|^2}b(\omega,\hat{p})(SF^{\varepsilon}_R)(t-|y-x|,y,p)\label{W1EB10}\\
&\hspace{0.5cm}+\iint\limits_{|x-y|\leq t}\frac{dpdy}{|y-x|}c(\omega,\hat{p})(S^2F^{\varepsilon}_R)(t-|y-x|,y,p),\nonumber
\end{align}
where $\partial_{x_j}\big(E_R^{\varepsilon,i}\big)_0(t,x)$ is a derivative of the initial datum. Moreover, the kernels $a, b, c$ above are smooth except at points where $1+\hat{p}\cdot\omega=0$, and satisfy
\begin{align}
&(i)\hspace{1.5cm} |a(\omega,\hat{p})|\lesssim (1+\hat{p}\cdot\omega)^{-4},\qquad \int_{|\omega|=1}d\omega a(\omega,\hat{p})=0,\nonumber\\
&(ii)\hspace{1.4cm} |b(\omega,\hat{p})|\lesssim (1+\hat{p}\cdot\omega)^{-3},\label{zero}\\
&(iii)\hspace{1.3cm} |c(\omega,\hat{p})|\lesssim (1+\hat{p}\cdot\omega)^{-2}\nonumber.
\end{align}
The gradient of the magnetic field $\partial_{x_j}B_R^{\varepsilon,i}$ admits a similar representation as \eqref{W1EB}, but
with slightly different kernels $a_B, b_B$ and $c_B$ for which the properties \eqref{W1EB10} and \eqref{zero} remain valid.
\end{lemma}

 Using again the definition of the operator $S$ and the equation satisfied by $F^{\varepsilon}_R$  in \eqref{remain}, we have the following gradient bound estimate.
\begin{proposition}\label{WEBe}
For $t\in [0, \frac{1}{2}|\ln\varepsilon|^{\frac{1}{3}}]$, the gradient of remainders $E^{\varepsilon}_R$ and $B^{\varepsilon}_R$ can be estimated as
\begin{align*}
&\|[\nabla_xE^{\varepsilon}_R(t), \nabla_xB^{\varepsilon}_R(t)]\|_{\infty}\nonumber\\
&\lesssim
|\ln \varepsilon|^{\frac{1}{3}}\int_0^t\|[E^{\varepsilon}_R(\tau), B^{\varepsilon}_R(\tau)]\|_{W^{1,\infty}}\,d\tau+\frac{|\ln \varepsilon|^{\frac{1}{3}}}{\varepsilon}\int_0^t\|h^{\varepsilon}(\tau)\|_{W^{1,\infty}}\, d\tau\\
 &+\varepsilon^{-\frac{17}{8}}|\ln \varepsilon|^2\big(\|[E^{\varepsilon}_R(0), B^{\varepsilon}_R(0)]\|_{\infty}+\|h^{\varepsilon}(0)\|_{\infty}+\varepsilon^k\big)+\varepsilon^{-\frac{29}{8}}|\ln \varepsilon|^2\sup_{0\leq s\leq t}\|f^{\varepsilon}(t)\|.\nonumber
\end{align*}
\end{proposition}
\begin{proof}
We estimate the three integrals in the right hand side of \eqref{W1EB10} one by one.
Noting that
$$\iint\limits_{|x-y|\leq t}\frac{dpdy}{|y-x|^3}a(\omega,\hat{p})F^{\varepsilon}_R(t-|y-x|,y,p)
=\int_0^t\int_{\mathbb S^2}\frac{dpd\omega d\tau}{t-\tau}a(\omega,\hat{p})F^{\varepsilon}_R(\tau, x+(t-\tau)\omega,p),$$
 we use $(i)$ in  \eqref{zero} and \eqref{Linfty} to have
\begin{align*}
&\iint\limits_{|x-y|\leq t}\frac{dpdy}{|y-x|^3}a(\omega,\hat{p})F^{\varepsilon}_R(t-|y-x|,y,p)\nonumber\\
&= \int_0^t\int_{\mathbb{R}^3}\int_{\mathbb S^2}\frac{dpd\omega d\tau}{t-\tau}a(\omega,\hat{p})\left[F^{\varepsilon}_R(\tau, x+(t-\tau)\omega,p)-F^{\varepsilon}_R(\tau, x, p)\right]\\
&\lesssim t \sup_{t\in[0,T]}\|\nabla_xh^{\varepsilon}(t)\|_{\infty}\int_{\mathbb{R}^3}dp (1+|p|)^{8}  (1+|p|)^{-\beta}\sqrt{J_{M}}\nonumber\\
&\lesssim t \sup_{t\in[0,T]}\|\nabla_xh^{\varepsilon}(t)\|_{\infty}.\nonumber
\end{align*}
For the second integral in the right hand side of \eqref{W1EB10}, it can be estimated in almost the same way as $E^{\varepsilon,i}_{R,S}$ in \eqref{LEB2}. The main difference is that we don't integrate by parts w.r.t. $p$. Then we use $(ii)$ in \eqref{zero} to have
\begin{align}
&\Big|\iint\limits_{|x-y|\leq t}\frac{dpdy}{|y-x|^2}b(\omega,\hat{p})(SF^{\varepsilon}_R)(t-|y-x|,y,p)\Big|\nonumber\\
&\lesssim\int_0^t \|[E_R^{\varepsilon}(\tau), B_R^{\varepsilon}(\tau)]\|_{\infty}\, d\tau+\frac{1}{\varepsilon}\int_0^t\|h^{\varepsilon}(\tau)]\|_{W^{1,\infty}}\, d\tau+\varepsilon^k (1+t)^{2k+2}.\nonumber
 \end{align}

Now we estimate the third integral in the right hand side of \eqref{W1EB10}. It holds that
\begin{eqnarray}
&&\iint\limits_{|x-y|\leq t}\frac{dpdy}{|y-x|}c(\omega,\hat{p})(S^2F^{\varepsilon}_R)(t-|y-x|,y,p)\nonumber\\
&&=\iint\limits_{|x-y|\leq t}\frac{dpdy}{|y-x|}c(\omega,\hat{p})(\partial_t+\hat{p}\cdot \nabla_y)\Big[(E_R^{\varepsilon}+\hat{p} \times B_R^{\varepsilon} )\cdot\nabla_pF^{\varepsilon}\Big](t-|y-x|,y,p)\nonumber\\
&&\hspace{0.5cm}+\iint\limits_{|x-y|\leq t}\frac{dpdy}{|y-x|}c(\omega,\hat{p})(\partial_t+\hat{p}\cdot \nabla_y)\Big[\sum_{i=0}^{2k-1}\varepsilon^i(E_i+\hat{p} \times B_i )\cdot\nabla_pF_R^{\varepsilon}\Big](t-|y-x|,y,p)\label{EBW}\\
 &&\hspace{0.5cm}+\frac{1}{\varepsilon}\iint\limits_{|x-y|\leq t}\frac{dpdy}{|y-x|}c(\omega,\hat{p})(\partial_t+\hat{p}\cdot \nabla_y)[Q(F_R^{\varepsilon},F^{\varepsilon})+Q(F^{\varepsilon},F_R^{\varepsilon})](t-|y-x|,y,p)\nonumber\\
& &\hspace{0.5cm}+\varepsilon^{k}\iint\limits_{|x-y|\leq t}\frac{dpdy}{|y-x|}c(\omega,\hat{p})(\partial_t+\hat{p}\cdot \nabla_y)A(t-|y-x|,y,p).\nonumber
\end{eqnarray}
 For the second line in \eqref{EBW}, we integrate w.r.t. $p$ and use the Maxwell system in \eqref{remain} to have
\begin{align}
&\Big|\iint\limits_{|x-y|\leq t}\frac{dpdy}{|y-x|}c(\omega,\hat{p})(\partial_t+\hat{p}\cdot \nabla_x)\Big[(E_R^{\varepsilon}+\hat{p} \times B_R^{\varepsilon} )\cdot\nabla_pF^{\varepsilon}\Big](t-|y-x|,y,p)\Big|\nonumber\\
&\leq\iint\limits_{|x-y|\leq t}\frac{dpdy}{|y-x|}\sum_{i=0}^1|\nabla_p^ic(\omega,\hat{p})|\Big[\big(|\nabla_xE_R^{\varepsilon}|+|\nabla_xB_R^{\varepsilon}|+\Big\|\frac{\sqrt{J_{M}} h^{\varepsilon}}{w}\Big\|_{L^1_p}\big)F^{\varepsilon}\Big](t-|y-x|,y,p)\nonumber\\
&\hspace{0.5cm}+\iint\limits_{|x-y|\leq t}\frac{dpdy}{|y-x|}\sum_{i=0}^1|\nabla_p^ic(\omega,\hat{p})|\Big[(|E_R^{\varepsilon}|+|B_R^{\varepsilon}| )\big(|(\partial_t+\hat{p}\cdot \nabla_x)F^{\varepsilon}|+|\nabla_xF^{\varepsilon}|\big)\Big](t-|y-x|,y,p)\nonumber\\
&\leq \int_0^t(t-\tau)\Big(\|[\nabla_xE^{\varepsilon}_R(\tau), \nabla_xB^{\varepsilon}_R(\tau)]\|_{\infty}+\|h^{\varepsilon}(\tau)\|_{\infty}\Big)\,d\tau \Big\|\sum_{i=0}^1|\nabla_p^ic(\omega,\hat{p})|F^{\varepsilon}\Big\|_{L^1_p}\nonumber\\
&\hspace{0.5cm}+\int_0^t(t-\tau)\|[E^{\varepsilon}_R(\tau), B^{\varepsilon}_R(\tau)]\|_{\infty}\,d\tau \Big\|\sum_{i=0}^1|\nabla_p^ic(\omega,\hat{p})||\nabla_xF^{\varepsilon}|\Big\|_{L^1_p}\nonumber\\
&\hspace{0.5cm}+\int_0^t(t-\tau)\|[E^{\varepsilon}_R(\tau), B^{\varepsilon}_R(\tau)]\|_{\infty}\,d\tau \Big\|\sum_{i=0}^1|\nabla_p^ic(\omega,\hat{p})|(\partial_t+\hat{p}\cdot \nabla_x)F^{\varepsilon}\Big\|_{L^1_p}\nonumber.
\end{align}
On the other hand, similar to \eqref{Fpl} and \eqref{Fpj}, we can use $(iii)$ in \eqref{zero} and the a priori assumptions \eqref{assump} to have
\begin{align}\label{Fplx}
&\Big\|\sum_{i=0}^1|\nabla_p^ic(\omega,\hat{p})|F^{\varepsilon}\Big\|_{L^1_p}\lesssim 1,\qquad\Big\|\sum_{i=0}^1|\nabla_p^ic(\omega,\hat{p})||\nabla_xF^{\varepsilon}|\Big\|_{L^1_p}\lesssim 1,\\
 & \Big\|\sum_{i=0}^1|\nabla_p^ic(\omega,\hat{p})|(\partial_t+\hat{p}\cdot \nabla_x)F^{\varepsilon}\Big\|_{L^1_p}\lesssim 1  \nonumber.
\end{align}
Then, we can further bound the second line in the right hand side of \eqref{EBW} by $$C\int_0^t(t-\tau)\Big(\|[E^{\varepsilon}_R(\tau), B^{\varepsilon}_R(\tau)]\|_{W^{1,\infty}}+\|h^{\varepsilon}(\tau)\|_{\infty}\Big)\,d\tau. $$
For the third line in \eqref{EBW}, we integrate by parts w.r.t. $p$ and use \eqref{decay} and \eqref{growth0} in Theorem \ref{fn} to have
\begin{align*}
&\iint\limits_{|x-y|\leq t}\frac{dpdy}{|y-x|}\sum_{j=0}^1|\nabla_p^jc(\omega,\hat{p})|\Big[\sum_{i=0}^{2k-1}\varepsilon^i
\sum_{j=0}^1\big(|\nabla_{t,y}^jE_i|+|\nabla_{t,y}^jB_i|\big )\nonumber\\
&\hspace{0.5cm}\times \big(|(\partial_t+\hat{p}\cdot \nabla_x)F^{\varepsilon}_R|+|\nabla_xF^{\varepsilon}_R|\big)\Big](t-|y-x|,y,p)\\
&\leq \iint\limits_{|x-y|\leq t}\frac{dpdy}{|y-x|}\sum_{j=0}^1|\nabla_p^jc(\omega,\hat{p})|\big[(1+t-|y-x|)^{-\beta_0}+\sum_{i=0}^{2k-1}\varepsilon^i
(1+t-|y-x|)^i\big]\nonumber\\
&\hspace{0.5cm}\times \Big[\big(|(\partial_t+\hat{p}\cdot \nabla_x)F^{\varepsilon}_R|+|\nabla_xF^{\varepsilon}_R|\big)\Big](t-|y-x|,y,p)\\
&\lesssim \iint\limits_{|x-y|\leq t}\frac{dpdy}{|y-x|}\sum_{j=0}^1|\nabla_p^jc(\omega,\hat{p})|\Big[\big(|(\partial_t+\hat{p}\cdot \nabla_x)F^{\varepsilon}_R|+|\nabla_xF^{\varepsilon}_R|\big)\Big](t-|y-x|,y,p)
\end{align*}
for $t\in [0, \frac{1}{2}|\ln\varepsilon|^{\frac{1}{3}}]$. Therefore, similar to the estimate of the first integral in the right hand side of \eqref{EBW}, we can further obtain the following upper bound of the third line in \eqref{EBW}:
$$C\Big[\int_0^t (t-\tau)\|[E_R^{\varepsilon}(\tau), B_R^{\varepsilon}(\tau)]\|_{\infty}\, d\tau+\frac{1}{\varepsilon}\int_0^t(t-\tau)\|h^{\varepsilon}(\tau)\|_{W^{1,\infty}}\, d\tau+\varepsilon^k (1+t)^{2k+3}\Big].$$
For the fourth line in \eqref{EBW}, we have
\begin{align}
&\frac{1}{\varepsilon}\iint\limits_{|x-y|\leq t}\frac{dpdy}{|y-x|}c(\omega,\hat{p})(\partial_t+\hat{p}\cdot \nabla_x)[Q(F_R^{\varepsilon},F^{\varepsilon})+Q(F^{\varepsilon},F_R^{\varepsilon})](t-|y-x|,y,p)\nonumber\\
=&\frac{1}{\varepsilon}\iint\limits_{|x-y|\leq t}\frac{dpdy}{|y-x|}c(\omega,\hat{p})\Big[Q(\partial_tF_R^{\varepsilon},F^{\varepsilon})+\hat{p}\cdot Q(\nabla_yF_R^{\varepsilon}, F^{\varepsilon})\Big](t-|y-x|,y,p)\nonumber\\
&+\frac{1}{\varepsilon}\iint\limits_{|x-y|\leq t}\frac{dpdy}{|y-x|}c(\omega,\hat{p})\Big[Q(F_R^{\varepsilon},\partial_tF^{\varepsilon})+\hat{p}\cdot Q(F_R^{\varepsilon}, \nabla_yF^{\varepsilon})\Big](t-|y-x|,y,p)\label{diff}\\
&+\frac{1}{\varepsilon}\iint\limits_{|x-y|\leq t}\frac{dpdy}{|y-x|}c(\omega,\hat{p})\Big[Q(\partial_tF^{\varepsilon},F_R^{\varepsilon})+\hat{p}\cdot Q(\nabla_yF^{\varepsilon}, F_R^{\varepsilon})\Big](t-|y-x|,y,p)\nonumber\\
&+\frac{1}{\varepsilon}\iint\limits_{|x-y|\leq t}\frac{dpdy}{|y-x|}c(\omega,\hat{p})\Big[Q(F^{\varepsilon},\partial_tF_R^{\varepsilon})+\hat{p}\cdot Q(F^{\varepsilon}, \nabla_yF_R^{\varepsilon})\Big](t-|y-x|,y,p)\nonumber\\
=&{\mathcal H}_1+{\mathcal H}_2+{\mathcal H}_3+{\mathcal H}_4.\nonumber
\end{align}
Here we denote ${\mathcal H}_i, 1\leq i\leq 4$  as the four integrals in the right hand side of \eqref{diff}.
As the estimation for the these terms are similar, we only deal with ${\mathcal H}_1$ for simplicity. For ${\mathcal H}_1$, we use the equation of \eqref{remain} to have
\begin{align}
{\mathcal H}_1=&\frac{1}{\varepsilon}\iint\limits_{|x-y|\leq t}\frac{dpdy}{|y-x|}c(\omega,\hat{p})\Big[\hat{p}\cdot Q(\nabla_yF_R^{\varepsilon},F^{\varepsilon})-Q(\hat{p}\cdot\nabla_yF_R^{\varepsilon},F^{\varepsilon})\Big](t-|y-x|,y,p)\nonumber\\
&+\frac{1}{\varepsilon}\iint\limits_{|x-y|\leq t}\frac{dpdy}{|y-x|}c(\omega,\hat{p})Q[(E_R^{\varepsilon}+\hat{p} \times B_R^{\varepsilon} )\cdot\nabla_pF^{\varepsilon}, F^{\varepsilon}](t-|y-x|,y,p)\nonumber\\
&+\frac{1}{\varepsilon}\iint\limits_{|x-y|\leq t}\frac{dpdy}{|y-x|}c(\omega,\hat{p})Q[\sum_{i=0}^{2k-1}\varepsilon^i(E_i+\hat{p} \times B_i )\cdot\nabla_pF_R^{\varepsilon}, F^{\varepsilon}](t-|y-x|,y,p)\label{diff1}\\
 &+\frac{1}{\varepsilon^2}\iint\limits_{|x-y|\leq t}\frac{dpdy}{|y-x|}c(\omega,\hat{p})Q[Q(F_R^{\varepsilon},F^{\varepsilon})+Q(F^{\varepsilon},F_R^{\varepsilon}), F^{\varepsilon}](t-|y-x|,y,p)\nonumber\\
 &+\varepsilon^{k-1}\iint\limits_{|x-y|\leq t}\frac{dpdy}{|y-x|}c(\hat{p})Q(A,F^{\varepsilon})(t-|y-x|,y,p).\nonumber
\end{align}
Similar to the estimate of the third line in \eqref{LEB2},
the first and third terms in  the right hand side of \eqref{diff1} can be controlled by $\frac{C}{\varepsilon}\int_0^t(t-\tau)\|h^{\varepsilon}(\tau)\|_{W^{1,\infty}}\, d\tau$.
Note that as in \eqref{Fpl} and \eqref{Fplx},
\begin{align*}
&\sum_{i=0}^1\Big\|\frac{1}{\sqrt{\mathbf{M}}}|\nabla_p^iF ^{\varepsilon}|\Big\|_{L^1_p}\nonumber\\
\lesssim &\int_{\mathbb R^3} \frac{1}{\sqrt{\mathbf{M}}} \sum_{i=0}^1\Big(|\nabla_p^iF_0|+\sum_{j=1}^{2k-1}\varepsilon^j|\nabla_p^iF_j|+\varepsilon^k
\Big|\nabla_p^i\Big(\frac{\sqrt{J_{M}}}{w}|h^{\varepsilon}\Big)\Big|\Big)\, dp\\
\lesssim & 1+\sum_{j=1}^{2k-1}\varepsilon^j(1+t)^j+\varepsilon^k\sup_{t\in [0, \frac{1}{2}|\ln\varepsilon|^{\frac{1}{3}}]}\|h^{\varepsilon}(t)\|_{W^{1,\infty}}\lesssim 1  \nonumber.
\end{align*}
We bound the second line in \eqref{diff1} by
\begin{align*}
&\frac{1}{\varepsilon}\int\limits_{|x-y|\leq t}\frac{dy}{|y-x|}\sup_{p\in\mathbb R^3} |c(\omega,\hat{p})\sqrt{\mathbf{M}}|\Big[\big(|E^{\varepsilon}_R|+|B^{\varepsilon}_R|\big )\Big\|\frac{Q(|\nabla_pF^{\varepsilon}|, F^{\varepsilon})}{\sqrt{\mathbf{M}}}\Big\|_{L^1_p}\Big](t-|y-x|,y,p)\\
&\lesssim \frac{1}{\varepsilon}\int\limits_{|x-y|\leq t}\frac{dy}{|y-x|}\Big[\big(|E^{\varepsilon}_R|+|B^{\varepsilon}_R|\big )\Big\|\frac{|\nabla_pF^{\varepsilon}|}{\sqrt{\mathbf{M}}}\Big\|_{L^1_p}\Big\|\frac{F^{\varepsilon}}{\sqrt{\mathbf{M}}}\Big\|_{L^1_p}\Big](t-|y-x|,y)\nonumber\\
&\lesssim \frac{1}{\varepsilon}\int_0^t(t-\tau)\|[E^{\varepsilon}_R(\tau), B^{\varepsilon}_R(\tau)]\|_{\infty}\,d\tau.
\end{align*}
Similarly, for the fourth line in \eqref{diff1}, we use \eqref{Fpj} to control it by
\begin{align*}
&\frac{1}{\varepsilon^2}\int\limits_{|x-y|\leq t}\frac{dy}{|y-x|}\sup_{p\in\mathbb R^3} |c(\omega,\hat{p})\sqrt{\mathbf{M}}|\\
&\hspace{0.5cm}\times\Big[\Big\|\frac{F^{\varepsilon}}{\sqrt{\mathbf{M}}}\Big\|_{L^1_p}\Big(\Big\|\frac{Q(F_R^{\varepsilon}, F^{\varepsilon})}{\sqrt{\mathbf{M}}}\Big\|_{L^1_p}+\Big\|\frac{Q(F^{\varepsilon}, F_R^{\varepsilon})}{\sqrt{\mathbf{M}}}\Big\|_{L^1_p}\Big)\Big](t-|y-x|,y,p)\\
\lesssim&\frac{1}{\varepsilon^2}\Big\|\frac{F^{\varepsilon}}{\sqrt{\mathbf{M}}}\Big\|^2_{L^1_p} \int\limits_{|x-y|\leq t}\frac{dy}{|y-x|}\|h^{\varepsilon}(t-|y-x|,y)\|_{L^{\infty}_p}\nonumber\\
\lesssim& \frac{1}{\varepsilon^2} \int_0^t(t-\tau)\|h^{\varepsilon}(\tau)\|_{\infty}\,d\tau.
\end{align*}
It is clear to see that the fifth line in \eqref{diff1} is bounded by $\varepsilon^{k-1} (1+t)^{2k+3}$. Then, the upper bound of ${\mathcal H}_1$ and the fourth line in \eqref{EBW} is
\begin{align*}
&C\Big[\frac{1}{\varepsilon}\int_0^t(t-\tau)\|h^{\varepsilon}(\tau)\|_{W^{1,\infty}}\, d\tau+\frac{1}{\varepsilon}\int_0^t(t-\tau)\|[E^{\varepsilon}_R(\tau), B^{\varepsilon}_R(\tau)]\|_{\infty}\,d\tau\Big]\nonumber\\
& +C\Big[\frac{1}{\varepsilon^2} \int_0^t(t-\tau)\|h^{\varepsilon}(\tau)\|_{\infty}\,d\tau+\varepsilon^{k-1} (1+t)^{2k+3}\Big].
\end{align*}
Finally, we straightly have the upper bound $\varepsilon^{k} (1+t)^{2k+3}$ for the fifth line in \eqref{EBW} and obtain that
\begin{eqnarray}
&&\Big|\iint\limits_{|x-y|\leq t}\frac{dpdy}{|y-x|}c(\omega,\hat{p})(S^2F^{\varepsilon}_R)(t-|y-x|,y,p)\Big|\nonumber\\
&&\lesssim \int_0^t(t-\tau)\|[E^{\varepsilon}_R(\tau), B^{\varepsilon}_R(\tau)]\|_{W^{1,\infty}}\,d\tau+\frac{1}{\varepsilon}\int_0^t(t-\tau)\|[E^{\varepsilon}_R(\tau), B^{\varepsilon}_R(\tau)]\|_{\infty}\,d\tau\nonumber\\
 &&\hspace{0.5cm}+\frac{1}{\varepsilon}\int_0^t(t-\tau)\|h^{\varepsilon}(\tau)\|_{W^{1,\infty}}\, d\tau+\frac{1}{\varepsilon^2} \int_0^t(t-\tau)\|h^{\varepsilon}(\tau)\|_{\infty}\,d\tau+\varepsilon^{k-1} (1+t)^{2k+3}.\nonumber
\end{eqnarray}

We collect the estimates above, and use Proposition \ref{linfty} and \eqref{LEB1} to get that for $t\in [0, \frac{1}{2}|\ln \varepsilon|^{\frac{1}{3}}]$,
\begin{align*}
&\|[\nabla_xE^{\varepsilon}_R(t), \nabla_xB^{\varepsilon}_R(t)]\|_{\infty}\nonumber\\
\lesssim &|\ln \varepsilon|^{\frac{1}{3}}\int_0^t\|[E^{\varepsilon}_R(\tau), B^{\varepsilon}_R(\tau)]\|_{W^{1,\infty}}\,d\tau+\frac{|\ln \varepsilon|^{\frac{1}{3}}}{\varepsilon}\int_0^t\|h^{\varepsilon}(\tau)\|_{W^{1,\infty}}\, d\tau\\
 &+\varepsilon^{-\frac{17}{8}}|\ln \varepsilon|^2\big(\|[E^{\varepsilon}_R(0), B^{\varepsilon}_R(0)]\|_{W^{1,\infty}}+\|h^{\varepsilon}(0)\|_{\infty}+\varepsilon^k\big)+\varepsilon^{-\frac{29}{8}}|\ln \varepsilon|^2\sup_{0\leq s\leq t}\|f^{\varepsilon}(t)\|.\nonumber
\end{align*}

\end{proof}

\section{$W^{1,\infty}$ estimates for $h^{\varepsilon}$}\label{w1h}

\setcounter{equation}{0}
In this section, we present the $W^{1,\infty}$ estimates for $h^{\varepsilon}$.

\begin{proposition}\label{W1infty} For given $T\in{ [0,\frac{1}{2}|\ln\varepsilon|^{\frac{1}{3}}]}$, assume the crucial bootstrap assumptions \eqref{assump}. Then, for sufficiently small $\varepsilon$, we have
\begin{align*}
\sup_{0\leq t\leq T}\Big(\varepsilon^{\frac{3}{2}}\|h^{\varepsilon}(t)\|_{W^{1,\infty}}\Big)\lesssim& \varepsilon^{\frac{3}{2}}\|h^{\varepsilon}(0)\|_{W^{1,\infty}}+\sup_{0\leq t\leq T}\Big(\varepsilon^{\frac{5}{2}}\|[E^{\varepsilon}_R(t), B^{\varepsilon}_R(t)]\|_{W^{1,\infty}}\Big)\\
&+\sup_{0\leq t\leq T}\|f^{\varepsilon}(t)\|_{H^1}+\varepsilon^{\frac{2k+3}{2}}.
\end{align*}
\end{proposition}
\begin{proof} With the basic estimates along the curved characteristics in Lemma \ref{charc} and the  nice estimates of $\bar{k}_1$ and $\bar{k}_2$ in Corollary \ref{bar}, the proof can be done in a similar way as the $L^{\infty}$ bound estimate in Subsection 5.1 of \cite{Guo-Jang-CMP-2010}. However, there are two main differences and we explain them now. Firstly, the $W^{1,\infty}$ norm of the electromagnetic field appears in the upper bound of the $W^{1,\infty}$ estimates of $h^{\varepsilon}$. The reason for this is the same as that in Proposition \ref{linfty} and we omit the details for brevity. Secondly,
different from the $W^{1,\infty}$ estimates in \cite{Guo-Jang-CMP-2010}, we don't apply integration by parts to avoid estimating the norms $\|\nabla_{x,p}f^{\varepsilon}(t)\|$ during treating the most difficult term $K_{M,w}(h^{\varepsilon})$, and we directly use the norms $\|\nabla_{x,p}f^{\varepsilon}(t)\|$  instead. Take the $L^{\infty}$ estimate of $D_xh^{\varepsilon}$ for example. In Subsection 5.2 of \cite{Guo-Jang-CMP-2010}, $D_xh^{\varepsilon}$ was written as
\begin{eqnarray}
&&D_xh^{\varepsilon}(t,x,v)\nonumber\\
&&\hspace{0.5cm}=\ldots-\frac{1}{\varepsilon}
\int_0^t\int_0^s\exp\Big\{-\frac{1}{\varepsilon}\int_s^t\nu(\tau)d\tau\Big\}(K_{M,w}D_xh^{\varepsilon})(s, X(s), V(s)) ds+\ldots\nonumber\\
&&\hspace{0.5cm}=\ldots+\frac{1}{\varepsilon^2}
\int_0^t\int_0^{s-\kappa\varepsilon}\exp\Big\{-\frac{1}{\varepsilon}\int_s^t\nu(\tau)d\tau-\frac{1}{\varepsilon}\int_{s_1}^s\nu(\tau)d\tau\Big\}\nonumber\\
&&\hspace{0.5cm}\times\int_{B}l_{N}(V(s),v')l_{N}(V(s_1),v'')D_xh^{\varepsilon}(s_1, X(s_1), v'')dv'dv''ds_1ds+\ldots\nonumber\\
&&\hspace{0.5cm}=\ldots+\frac{1}{\varepsilon^2}
\int_0^t\int_0^{s-\kappa\varepsilon}\exp\Big\{-\frac{1}{\varepsilon}\int_s^t\nu(\tau)d\tau-\frac{1}{\varepsilon}\int_{s_1}^s\nu(\tau)d\tau\Big\}\nonumber\\
&&\hspace{0.5cm}\times\int_{\hat{B}}l_{N}(V(s),v')l_{N}(V(s_1),v'')D_xh^{\varepsilon}(s_1, X(s_1), v'')|\frac{dv'}{dy}|dydv''ds_1ds+\ldots,\label{BB}
\end{eqnarray}
 where $B=\{|v'|\leq 2N, |v''|\leq 3N\}$. Noting that the volume of $B$ satisfies $|B|\lesssim N^6$  and
  $|\frac{dv'}{dy}|\lesssim \varepsilon^{-3}$, the upper bound of  the last expression in \eqref{BB} is
  $$\frac{C_N}{\varepsilon^{3/2}}\|D_xf^{\varepsilon}\|.$$

It should also be pointed out that in our case, we don't need extra weighted $L^{\infty}$ estimate for $h^{\varepsilon}$. In fact, for estimates related to $\bar{K}(h^{\varepsilon})$,  no extra weight arises due to Corollary \ref{bar}.  For a typical term $\frac{w(|p|)}{\sqrt{J_{M}}}D_xQ(\frac{\sqrt{J_{M}}}{w}h^{\varepsilon}, F_i)$,  extra weight for $h^{\varepsilon}$ is not necessary because $J_{M}$ is a global Maxwellian and  $\frac{D_xF_i}{\sqrt{J_{M}}}$ can also be controlled by a Maxwellian. While for
$\frac{w(|p|)}{\sqrt{J_{M}}}D_pQ(\frac{\sqrt{J_{M}}}{w}h^{\varepsilon}, F_i)$, it can be estimated similarly as Lemma 2 and Lemma 5 in \cite{Guo-Strain-CMP-2012} without extra weight for $h^{\varepsilon}$ arises either.
\end{proof}

\section{Auxiliary $H^1$ estimates for $f^{\varepsilon}$}\label{h1f}

\setcounter{equation}{0}
In this part, we continue to perform the $L^2$ energy estimates for the first order derivatives of the remainders  $(f^{\varepsilon}, E^{\varepsilon}_ R, B^{\varepsilon}_ R)$. We first deal with the space derivative estimate.
\begin{proposition}\label{H1x}
For the remainders $(f^{\varepsilon}, E^{\varepsilon}_ R, B^{\varepsilon}_ R)$, it holds that
\begin{equation}
\begin{split}
&\frac{d}{dt}\Big(\Big\|\sqrt{\frac{2T_0}{u^0}}\nabla_xf^{\varepsilon}(t)\Big\|^2+\|[\nabla_xE^{\varepsilon}_ R(t), \nabla_xB^{\varepsilon}_ R(t)]\|^2\Big)\\
&\hspace{1cm}+\Big(\frac{\delta_0T_M}{2\varepsilon}-C\varepsilon^{k-2}\|h^{\varepsilon}\|_{\infty}\Big)
\|\nabla_x{\{\bf I-P\}}f^{\varepsilon}\|^2\\
&\hspace{1cm}\lesssim \left[ (1+t)^{-\beta_0}+\varepsilon^{k}\|h^{\varepsilon}\|_{W^{1,\infty}}+\varepsilon  {\mathcal I}_1(t)\right]
\left(\|f^{\varepsilon}\|^2_{H^1}+\|(E^{\varepsilon}_ R, B^{\varepsilon}_ R)\|^2_{H^1}\right)\label{H1ener0}\\
& \hspace{1cm}+\Big(\frac{1}{\varepsilon}(1+t)^{-2\beta_0} +(1+t)^{-\beta_0}[\varepsilon^{k-1}
\|h^{\varepsilon}\|_{\infty}]^2\Big)\|f^{\varepsilon}\|^2\\
& \hspace{1cm}+ \varepsilon^{\frac{7}{4}}(1+t)^{-\beta_0}\|h^{\varepsilon}\|_{\infty}\|f^{\varepsilon}\|+\varepsilon^{k}{\mathcal I}_2(t)\|\nabla_xf^{\varepsilon}\|.
\end{split}
\end{equation}

\end{proposition}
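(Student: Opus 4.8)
The plan is to differentiate the remainder system \eqref{L20}--\eqref{L201} in $x$ and repeat the $L^2$ scheme of Proposition \ref{L2ener} (itself modelled on Proposition $3.1$ of \cite{Guo-Jang-CMP-2010}), the genuinely new feature being the commutators produced by the $x$-dependence of the coefficients. Concretely, for $l=1,2,3$ I would apply $\partial_{x_l}$ to \eqref{L20}, take the $L^2$ inner product with $\frac{8\pi k_BT_0}{u^0}\partial_{x_l}f^{\varepsilon}$, apply $\partial_{x_l}$ to \eqref{L201} and form the corresponding Maxwell energy identity, then add and sum over $l$. Exactly as in \eqref{L2max}, the top-order electric coupling $4\pi e_-\langle\hat p\cdot\partial_{x_l}E^{\varepsilon}_R\sqrt{\mathbf{M}},\partial_{x_l}f^{\varepsilon}\rangle$ cancels, and the term $\frac1\varepsilon\langle L\,\partial_{x_l}f^{\varepsilon},\frac{8\pi k_BT_0}{u^0}\partial_{x_l}f^{\varepsilon}\rangle$ produces, after \eqref{temp}, the dissipation $\frac{4\pi k_B\delta_0T_M}{\varepsilon}\|\partial_{x_l}{\{\bf I-P\}}f^{\varepsilon}\|^2$. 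Two top-order transport-type terms drop out: $\langle c\hat p\cdot\nabla_x\partial_{x_l}f^{\varepsilon},\frac{8\pi k_BT_0}{u^0}\partial_{x_l}f^{\varepsilon}\rangle=0$ since $\frac{8\pi k_BT_0}{u^0}$ is $p$-independent, and $\langle e_-(E_0+\hat p\times B_0)\cdot\nabla_p\partial_{x_l}f^{\varepsilon},\frac{8\pi k_BT_0}{u^0}\partial_{x_l}f^{\varepsilon}\rangle=0$ since $\nabla_p\cdot(E_0+\hat p\times B_0)=0$.

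Next I would dispatch the lower-order terms. Whenever $\partial_{x_l}$ lands on the weight $\frac{8\pi k_BT_0}{u^0}$, on $\nabla_p\sqrt{\mathbf{M}}$, or on $E_0,B_0$, it yields a factor $\|\nabla_x[n_0,u,E_0,B_0]\|_\infty\lesssim(1+t)^{-\beta_0}$ by \eqref{decay}, contributing $\lesssim(1+t)^{-\beta_0}\|f^{\varepsilon}\|_{H^1}^2$ after also using the momentum-weighted $L^\infty_xL^2_p$ splitting ${\bf 1}_{|p|\geq\sqrt{\kappa/\varepsilon}}+{\bf 1}_{|p|\leq\sqrt{\kappa/\varepsilon}}$ exactly as in \eqref{L200}, which absorbs a piece $\frac\kappa\varepsilon\|\partial_{x_l}{\{\bf I-P\}}f^{\varepsilon}\|^2$ into the dissipation and leaves a high-momentum tail $\lesssim\varepsilon^{-\frac74}(1+t)^{-\beta_0}\|h^{\varepsilon}\|_\infty\|f^{\varepsilon}\|$. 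The collision contributions are differentiated by Leibniz: using Theorem $2$ of \cite{Guo-Strain-CMP-2012}, the nonlinear part $\varepsilon^{k-1}\nabla_x\Gamma(f^{\varepsilon},f^{\varepsilon})$ gives $\varepsilon^{k-2}\|h^{\varepsilon}\|_\infty\|\partial_{x_l}{\{\bf I-P\}}f^{\varepsilon}\|^2+\varepsilon^k\|h^{\varepsilon}\|_\infty\|f^{\varepsilon}\|_{H^1}^2$ (the first joining the left side), while the $\Gamma(F_i,\cdot)$, $A$, and $F_i$-field source terms, estimated with the growth bounds \eqref{growth0} of Theorem \ref{fn}, contribute $\varepsilon{\mathcal I}_1(t)(\|f^{\varepsilon}\|_{H^1}^2+\|[E^{\varepsilon}_R,B^{\varepsilon}_R]\|_{H^1}^2)$ and $\varepsilon^k{\mathcal I}_2(t)\|\nabla_xf^{\varepsilon}\|$. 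The $\varepsilon^k$-order quadratic remainder terms and the term $\varepsilon^k\frac{e_-\sqrt{\mathbf{M}}}{2k_BT_0}(u^0\hat p-u)\cdot(E^{\varepsilon}_R+\hat p\times B^{\varepsilon}_R)f^{\varepsilon}$ are controlled with the bootstrap bounds \eqref{assump} on $\|h^{\varepsilon}\|_{W^{1,\infty}}$ and $\|[E^{\varepsilon}_R,B^{\varepsilon}_R]\|_{W^{1,\infty}}$, producing the coefficient $\varepsilon^k\|h^{\varepsilon}\|_{W^{1,\infty}}$; the magnetic source $\langle u\cdot(\hat p\times\partial_{x_l}B^{\varepsilon}_R)\sqrt{\mathbf{M}},\partial_{x_l}f^{\varepsilon}\rangle$ and its $\partial_{x_l}u$ companion are absorbed as in Proposition \ref{L2ener}.

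The delicate point, and the one I expect to be the main obstacle, is the commutator $[\partial_{x_l},\frac1\varepsilon L]f^{\varepsilon}=\frac1\varepsilon(\partial_{x_l}L)f^{\varepsilon}=\frac1\varepsilon[(\partial_{x_l}\nu)f^{\varepsilon}-(\partial_{x_l}K_{\mathbf{M}})(f^{\varepsilon})]$, which carries the singular prefactor $\frac1\varepsilon$. The saving is that $\partial_{x_l}L$ differentiates only the local Maxwellian $\mathbf{M}$, hence comes with a factor $\|\nabla_x[n_0,u,T_0]\|_\infty\lesssim(1+t)^{-\beta_0}$. Pairing $\frac1\varepsilon\langle(\partial_{x_l}L)f^{\varepsilon},\frac{8\pi k_BT_0}{u^0}\partial_{x_l}f^{\varepsilon}\rangle$, decomposing $\partial_{x_l}f^{\varepsilon}={\bf P}\partial_{x_l}f^{\varepsilon}+{\{\bf I-P\}}\partial_{x_l}f^{\varepsilon}$, splitting $f^{\varepsilon}$ in momentum, and using the pointwise bound $(1+|p|)|f^{\varepsilon}|\lesssim(1+|p|)^{-7}\|h^{\varepsilon}\|_\infty$, Young's inequality against the dissipation will leave precisely the terms $\frac1\varepsilon(1+t)^{-2\beta_0}\|f^{\varepsilon}\|^2$, $(1+t)^{-\beta_0}[\varepsilon^{k-1}\|h^{\varepsilon}\|_\infty]^2\|f^{\varepsilon}\|^2$, and $\varepsilon^{-\frac74}(1+t)^{-\beta_0}\|h^{\varepsilon}\|_\infty\|f^{\varepsilon}\|$ that appear on the right of \eqref{H1ener0}. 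This is exactly the origin of the $\varepsilon^{-1/2}$ loss noted in the introduction: since no $\nabla_p$-dissipation is available at the $x$-derivative level, $\|\nabla_pf^{\varepsilon}\|$, hence the full $\|f^{\varepsilon}\|_{H^1}$ and $\|[E^{\varepsilon}_R,B^{\varepsilon}_R]\|_{H^1}$, must be carried on the right-hand side and closed only later, after coupling with the momentum-derivative estimate of this section and integrating over $t\in[0,\varepsilon^{-1/2}]$. Collecting all the contributions above and summing over $l=1,2,3$ then yields \eqref{H1ener0}.
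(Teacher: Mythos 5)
Your overall scheme — applying $\partial_{x_l}$ to \eqref{L20}--\eqref{L201}, pairing with $\frac{8\pi k_BT_0}{u^0}\partial_{x_l}f^{\varepsilon}$, combining with the differentiated Maxwell energy identity, exploiting the cancellation of the leading $E^{\varepsilon}_R$ coupling, using the high-momentum tail splitting, and invoking Theorem $2$ of \cite{Guo-Strain-CMP-2012} together with \eqref{growth0} for the collision and source terms — is the paper's approach, and most of your term estimates are on target. But there is a genuine gap at precisely the point you flag as the main obstacle, the $\frac{1}{\varepsilon}$-weighted commutator
$\frac{1}{\varepsilon}\big\langle(\partial_{x_l}L){\{\bf I-P\}}f^{\varepsilon},\,\frac{8\pi k_BT_0}{u^0}\partial_{x_l}f^{\varepsilon}\big\rangle$.
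Writing $\partial_{x_l}f^{\varepsilon}={\bf P}\partial_{x_l}f^{\varepsilon}+{\{\bf I-P\}}\partial_{x_l}f^{\varepsilon}$ lets Young's inequality absorb the microscopic part into the dissipation, but it leaves an unabsorbed piece of size
$\frac{1}{\varepsilon}(1+t)^{-\beta_0}\|{\{\bf I-P\}}f^{\varepsilon}\|\,\|{\bf P}\partial_{x_l}f^{\varepsilon}\|$,
and there is nothing on the left of \eqref{H1ener0} to control $\|{\bf P}\partial_{x_l}f^{\varepsilon}\|$ with a $\frac{1}{\varepsilon}$ weight. Note that $(\partial_{x_l}L){\{\bf I-P\}}f^{\varepsilon}$ is in general \emph{not} microscopic (the null space of $L$ is $x$-dependent), so this pairing does not vanish. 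Your additional devices — momentum splitting and the pointwise bound $(1+|p|)|f^{\varepsilon}|\lesssim(1+|p|)^{-7}\|h^{\varepsilon}\|_{\infty}$ — trade momentum decay; they do not supply the missing $\frac{1}{\varepsilon}$-weighted control of $\nabla_x{\bf P}f^{\varepsilon}$. As written your argument would produce a term $\frac{1}{\varepsilon}(1+t)^{-\beta_0}\|f^{\varepsilon}\|\,\|\nabla_xf^{\varepsilon}\|$, which is not allowed by \eqref{H1ener0} (the only $\frac{1}{\varepsilon}$ on the right there multiplies $(1+t)^{-2\beta_0}\|f^{\varepsilon}\|^2$ alone).

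The step that rescues this — and it is stated, though briefly, in the paper's proof — is a further integration by parts in $x$. Since $(\partial_{x_l}L)$ carries an explicit factor $\nabla_x[n_0,u,T_0]$, one rewrites
$\big\langle(\partial_{x_l}L)({\{\bf I-P\}}f^{\varepsilon})\,W,\,\partial_{x_l}f^{\varepsilon}\big\rangle
=-\big\langle\partial_{x_l}\big[(\partial_{x_l}L)({\{\bf I-P\}}f^{\varepsilon})\,W\big],\,f^{\varepsilon}\big\rangle$
with $W=\frac{8\pi k_BT_0}{u^0}$, and the transferred $\partial_{x_l}$ then lands either on a coefficient (producing second $x$-derivatives of $(n_0,u,T_0)$, still $\lesssim(1+t)^{-\beta_0}$ by \eqref{decay}), or on ${\{\bf I-P\}}f^{\varepsilon}$, giving $\|D_x{\{\bf I-P\}}f^{\varepsilon}\|$ which \emph{is} controlled by the dissipation. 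This reproduces the paper's intermediate bound and closes to $\frac{1}{\varepsilon}(1+t)^{-2\beta_0}\|f^{\varepsilon}\|^2$ alone. A smaller inaccuracy: the term $(1+t)^{-\beta_0}[\varepsilon^{k-1}\|h^{\varepsilon}\|_{\infty}]^2\|f^{\varepsilon}\|^2$ that you attribute to the $L$-commutator actually originates from $\varepsilon^{k-1}D_x\Gamma(f^{\varepsilon},f^{\varepsilon})$ when $D_x$ hits the Maxwellian factor inside $\Gamma$, as in \eqref{Hx1000}. (Also, the high-momentum tail should come with $\varepsilon^{+\frac{7}{4}}$ rather than $\varepsilon^{-\frac{7}{4}}$ — the tail integral $\big(\int_{|p|\geq\sqrt{\kappa/\varepsilon}}(1+|p|)^{-10}dp\big)^{1/2}\sim(\varepsilon/\kappa)^{7/4}$ is small; the exponent in the displayed proposition is a typo that you carried over.)
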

\begin{proof} We take $D_x$ to \eqref{L20} and \eqref{L201}, and proceed similar derivation as \eqref{L202} to have
\begin{align}
& \frac{1}{2}\frac{d}{dt}\Big(\Big\|\sqrt{\frac{2T_0}{u^0}}D_xf^{\varepsilon}(t)\Big\|^2+\|[D_xE^{\varepsilon}_ R(t), D_xB^{\varepsilon}_ R(t)]\|^2\Big)+\frac{1}{\varepsilon}\langle D_xL{\{\bf I-P\}}f^{\varepsilon}, D_xf^{\varepsilon}\rangle\nonumber\\
&-\Big\langle D_x\Big[\Big(E_0+\hat{p} \times B_0 \Big)\cdot\nabla_pf^{\varepsilon}\Big],\frac{2T_0}{u^0}D_xf^{\varepsilon}\Big\rangle\nonumber\\
&=-\Big\langle\frac{D_xf^{\varepsilon}}{\sqrt{\mathbf{M}}}\Big[\partial_t+\hat{p}\cdot\nabla_x-\Big(E_0+\hat{p} \times B_0 \Big)\cdot\nabla_p\Big]\sqrt{\mathbf{M}},\frac{2T_0}{u^0}D_xf^{\varepsilon}\Big\rangle\nonumber\\
&-\Big\langle f^{\varepsilon}D_x\Big\{\frac{1}{\sqrt{\mathbf{M}}}\Big[\partial_t+\frac{p}{p^{0}}\cdot\nabla_x-(E_0+\hat{p} \times B_0 )\cdot\nabla_p\Big]\sqrt{\mathbf{M}}\Big\},\frac{2T_0}{u^0}D_xf^{\varepsilon}\Big\rangle\nonumber\\
&+\frac{1}{2}\Big\langle\Big[\Big(\partial_t+\hat{p}\cdot\nabla_x\Big)\Big(\frac{2T_0}{u^0}\Big)\Big]D_xf^{\varepsilon}, D_xf^{\varepsilon}\Big\rangle\nonumber\\
 &-\Big\langle D_x\Big[\frac{u^0}{T_0}\hat{p}\cdot E_R^{\varepsilon}\Big], \frac{T_0}{u^0}D_xf^{\varepsilon}\Big\rangle
 +\Big\langle D_x\Big[\frac{u\sqrt{\mathbf{M}}}{T_0} \cdot\Big(\hat{p} \times B_R^{\varepsilon} \Big)\Big],\frac{T_0}{u^0}D_xf^{\varepsilon}\Big\rangle\nonumber\\
 &+\Big\langle D_x\Big[(E_0+\hat{p} \times B_0) \Big]\cdot\nabla_pf^{\varepsilon},\frac{2T_0}{u^0}D_xf^{\varepsilon}\Big\rangle\nonumber\\
 &+\varepsilon^{k-1}\Big\langle D_x\Gamma(f^{\varepsilon},f^{\varepsilon}),\frac{2T_0}{u^0}D_xf^{\varepsilon}\Big\rangle\nonumber\\
 &+\sum_{i=1}^{2k-1}\varepsilon^{i-1}\Big\langle D_x\big[\Gamma( \frac{F_i}{\sqrt{\mathbf{M}}}, f^{\varepsilon})+\Gamma(f^{\varepsilon},  \frac{F_i}{\sqrt{\mathbf{M}}})\big],\frac{2T_0}{u^0}D_xf^{\varepsilon}\Big\rangle\nonumber\\
 &-\varepsilon^k\Big\langle D_x\Big[\frac{1}{T_0}\Big(u^0\hat{p}-u\Big)\cdot\Big(E_R^{\varepsilon}+\hat{p} \times B_R^{\varepsilon}\Big)f^{\varepsilon}\Big],\frac{2T_0}{u^0}D_xf^{\varepsilon}\Big\rangle\label{H100}\\
 &+\varepsilon^k\Big\langle D_x\Big[\Big(E_R^{\varepsilon}+\hat{p} \times B_R^{\varepsilon}\Big)\cdot \nabla_pf^{\varepsilon}\Big],\frac{2T_0}{u^0}D_xf^{\varepsilon}\Big\rangle\nonumber\\
 &+\sum_{i=1}^{2k-1}\varepsilon^i\Big\langle D_x\Big[\Big(E_i+\hat{p} \times B_i \Big)\cdot\nabla_pf^{\varepsilon}\Big],\frac{2T_0}{u^0}D_xf^{\varepsilon}\Big\rangle\nonumber\\
 &+\sum_{i=1}^{2k-1}\varepsilon^i\Big\langle D_x\Big[ \Big(E_R^{\varepsilon}+\hat{p} \times B_R^{\varepsilon} \Big)\cdot\frac{\nabla_pF_i}{\sqrt{\mathbf{M}}}\Big],\frac{2T_0}{u^0}D_xf^{\varepsilon}\Big\rangle\nonumber\\
 &-\sum_{i=1}^{2k-1}\varepsilon^i\Big\langle D_x\Big[\Big(E_i+\hat{p} \times B_i \Big)\cdot\frac{1}{2 T_0}\Big(u^0\hat{p}-u\Big)f^{\varepsilon}\Big],\frac{2T_0}{u^0}D_xf^{\varepsilon}\Big\rangle\nonumber\\
 &+\varepsilon^{k}\Big\langle D_x\bar{A},\frac{2T_0}{u^0}D_xf^{\varepsilon}\Big\rangle,
\nonumber
\end{align}
For brevity, we only estimate the second term in the left hand side of \eqref{H100}, the second, seventh, eighth terms in  the right hand side of \eqref{H100}. The rest of terms can be estimated similarly as these terms or the corresponding terms in Proposition \ref{L2ener}.

We first estimate the second term in the left hand side of \eqref{H100}. Noting
\begin{align*}
&-D_xL( {\{\bf I-P\}}f^{\varepsilon})\nonumber\\
&\hspace{0.5cm}= D_x\Big(\int_{\mathbb R^3}dq\int_{\mathbb S^2}d\omega \frac{s}{p^0q^0}\mathbb B(p,q,\omega)\sqrt{\mathbf{M}}(q)[{\{\bf I-P\}}f^{\varepsilon}(p')\sqrt{\mathbf{M}}(q')\nonumber\\
&\hspace{0.5cm}+{\{\bf I-P\}}f^{\varepsilon}(q')\sqrt{\mathbf{M}}(p')
-{\{\bf I-P\}}f^{\varepsilon}(p)\sqrt{\mathbf{M}}(q)-
{\{\bf I-P\}}f^{\varepsilon}(q)\sqrt{\mathbf{M}}(p)]\Big)\nonumber\\
&\hspace{0.5cm}=\int_{\mathbb R^3}dq\int_{\mathbb S^2}d\omega \frac{s}{p^0q^0}\mathbb B(p,q,\omega)D_x\sqrt{\mathbf{M}}(q)\Big({\{\bf I-P\}}f^{\varepsilon}(p')\sqrt{\mathbf{M}}(q')\nonumber\\
&\hspace{0.5cm}+{\{\bf I-P\}}f^{\varepsilon}(q')\sqrt{\mathbf{M}}(p')
-{\{\bf I-P\}}f^{\varepsilon}(p)\sqrt{\mathbf{M}}(q)-
{\{\bf I-P\}}f^{\varepsilon}(q)\sqrt{\mathbf{M}}(p)\Big)\nonumber\\
&\hspace{0.5cm}+\int_{\mathbb R^3}dq\int_{\mathbb S^2}d\omega \frac{s}{p^0q^0}\mathbb B(p,q,\omega)\sqrt{\mathbf{M}}(q)\Big({\{\bf I-P\}}f^{\varepsilon}(p')[D_x\sqrt{\mathbf{M}}(q')]\nonumber\\
&\hspace{0.5cm}+{\{\bf I-P\}}f^{\varepsilon}(q')[D_x\sqrt{\mathbf{M}}(p')]
-{\{\bf I-P\}}f^{\varepsilon}(p)[D_x\sqrt{\mathbf{M}}(q)]\nonumber\\
&\hspace{0.5cm}-
{\{\bf I-P\}}f^{\varepsilon}(q)[D_x\sqrt{\mathbf{M}}(p)]\Big)-L( D_x{\{\bf I-P\}}f^{\varepsilon})
\end{align*}
by the expression of the operator $L$, and
$$\|{\{\bf I-P\}}D_xf^{\varepsilon}-D_x{\{\bf I-P\}}f^{\varepsilon}\|=\|{\bf P}D_xf^{\varepsilon}-D_x{\bf P}f^{\varepsilon}\|\lesssim \left(\|\nabla_xn_0\|_{\infty}+\|\nabla_xu\|_{\infty}\right)\|{\bf P}f^{\varepsilon}\|,$$
we have
\begin{align*}
&\frac{1}{\varepsilon}\langle D_xL( {\{\bf I-P\}}f^{\varepsilon}), \frac{2T_0}{u^0}D_xf^{\varepsilon}\rangle\nonumber\\
&\hspace{0.5cm}\geq \frac{2\delta_0T_M}{u^0\varepsilon}\|D_x{\{\bf I-P\}}f^{\varepsilon}\|^2-\frac{C}{\varepsilon}\left(\|\nabla_xn_0\|_{\infty}+\|\nabla_xu\|_{\infty}\right)\|{\bf P}f^{\varepsilon}\|\|D_x{\{\bf I-P\}}f^{\varepsilon}\|\nonumber\\
&\hspace{0.5cm}-\frac{C}{\varepsilon}\left(\|\nabla_xn_0\|_{\infty}+\|\nabla_xu\|_{\infty}\right)\|f^{\varepsilon}\|(\|D_x{\{\bf I-P\}}f^{\varepsilon}\|+\|{\{\bf I-P\}}f^{\varepsilon}\|)\nonumber\\
&\hspace{0.5cm}\geq \frac{3\delta_0T_M}{2\varepsilon}\|D_x{\{\bf I-P\}}f^{\varepsilon}\|^2-\frac{C}{\varepsilon}(1+t)^{-2\beta_0}\|f^{\varepsilon}\|^2.\nonumber
\end{align*}
Here we used the integration by parts w.r.t. $x$ when the operator $D_x$ hits the local Maxwellian in the operator $L$.
Note that $D_x^2\{\frac{1}{\sqrt{\mathbf{M}}}[\partial_t+\frac{p}{p^{0}}\cdot\nabla_x-(E_0+\hat{p} \times B_0 )\cdot\nabla_p]\sqrt{\mathbf{M}}\}$ is a cubic polynomial of $p$, $(1+|p|)^3f^{\varepsilon}\leq (1+|p|)^{-5}h^{\varepsilon}$, and
$$\Big(\int_{|p|\geq\sqrt{\frac{\kappa}{\varepsilon}}}(1+|p|)^{-5\times2}dp\Big)^{1/2}\lesssim\Big(\frac{\varepsilon}{\kappa}\Big)^{\frac{7}{4}}.$$
Then, similar to the estimation of \eqref{L200}, the second term in the right hand side of \eqref{H100} can be controlled by
$$ (1+t)^{-\beta_0}\|f^{\varepsilon}\|^2_{H^1}+\frac{\kappa}{\varepsilon}\|{\{\bf I-P\}}D_xf^{\varepsilon}\|^2+\varepsilon^{\frac{7}{4}}(1+t)^{-\beta_0}\|h^{\varepsilon}\|_{\infty}\|f^{\varepsilon}\|.$$
Now we estimate the  seventh term in  the right hand side of \eqref{H100}.
Note that
\begin{align*}
&D_x\Gamma(f^{\varepsilon}, f^{\varepsilon})\nonumber\\
&=D_x\Big(\int_{\mathbb R^3}dq\int_{\mathbb S^2}d\omega \frac{s}{p^0q^0}\mathbb B(p,q,\omega)\sqrt{\mathbf{M}}(q)[f^{\varepsilon}(p')f^{\varepsilon}(q')-f^{\varepsilon}(p)f^{\varepsilon}(q)]\Big)\nonumber\\
&=\Gamma(f^{\varepsilon}, D_xf^{\varepsilon})+\Gamma(D_xf^{\varepsilon}, f^{\varepsilon})\\
&+\int_{\mathbb R^3}dq\int_{\mathbb S^2}d\omega \frac{s}{p^0q^0}\mathbb B(p,q,\omega)D_x\sqrt{\mathbf{M}}(q)[f^{\varepsilon}(p')f^{\varepsilon}(q')-f^{\varepsilon}(p)f^{\varepsilon}(q)].\nonumber
\end{align*}
Though ${\bf M}$ is a local Maxwellian, we can obtain from \eqref{temp} that
\begin{equation}\label{MJx}
\sqrt{\mathbf{M}}+|D_x\sqrt{\mathbf{M}}|\lesssim J_M^{\frac14},
\end{equation}
for $\bar{\varepsilon}_0$ sufficiently small in \eqref{decay}.
In Theorem 2 of \cite{Guo-Strain-CMP-2012}, energy estimates for derivatives of the nonlinear operator $\Gamma$ corresponding to a global Maxwellian were given.
Then, we can use \eqref{MJx}, Theorem 2 in \cite{Guo-Strain-CMP-2012}   to obtain
\begin{align}\label{Hx1000}
&\varepsilon^{k-1}\Big\langle D_x\Gamma(f^{\varepsilon}, f^{\varepsilon}),\frac{2T_0}{u^0}D_xf^{\varepsilon}\Big\rangle\nonumber\\
&\hspace{0.5cm}\leq \varepsilon^{k-1}\Big\langle [\Gamma(f^{\varepsilon}, D_xf^{\varepsilon})+\Gamma(D_xf^{\varepsilon}, f^{\varepsilon})],\frac{2T_0}{u^0}{\{\bf I-P\}}D_xf^{\varepsilon}\Big\rangle\nonumber\\
&\hspace{0.5cm}+C\varepsilon^{k-1}(\|\nabla_xn_0\|_{\infty}+\|\nabla_xu\|_{\infty})\|h^{\varepsilon}\|_{\infty}
\|f^{\varepsilon}\|\|D_xf^{\varepsilon}\|\\
&\hspace{0.5cm}\lesssim\varepsilon^{k-2}\|h^{\varepsilon}\|_{\infty}\|\nabla_x{\{\bf I-P\}}f^{\varepsilon}\|^2+\varepsilon^{k}\|h^{\varepsilon}\|_{\infty}
\|\nabla_xf^{\varepsilon}\|^2\nonumber\\
&\hspace{0.5cm}+(1+t)^{-\beta_0}\|D_xf^{\varepsilon}\|^2+(1+t)^{-\beta_0}[\varepsilon^{k-1}
\|h^{\varepsilon}\|_{\infty}]^2\|f^{\varepsilon}\|^2.\nonumber
\end{align}
Similarly, by  \eqref{growth0} in Theorem \ref{fn}, the eighth term in the right hand side of \eqref{H100} can be estimated by
\begin{align}\label{Hx1001}
 &\sum_{i=1}^{2k-1}\varepsilon^{i-1}\Big\langle [\Gamma(F_i, D_xf^{\varepsilon})+\Gamma(D_x\Gamma(f^{\varepsilon}, \frac{F_i}{\sqrt{\mathbf{M}}})\big],\frac{2T_0}{u^0}{\{\bf I-P\}}D_xf^{\varepsilon}\Big\rangle\nonumber\\
&\hspace{0.5cm}+\sum_{i=1}^{2k-1}\varepsilon^{i-1}\Big\langle [\Gamma(D_x \frac{F_i}{\sqrt{\mathbf{M}}}, f^{\varepsilon})+\Gamma(f^{\varepsilon}, D_xF_i)],\frac{2T_0}{u^0}{\{\bf I-P\}}D_xf^{\varepsilon}\Big\rangle\nonumber\\
&\hspace{0.5cm}+C(\|\nabla_xn_0\|_{\infty}+\|\nabla_xu\|_{\infty})\sum_{i=1}^{2k-1}\varepsilon^{i-1}\|F_i\|_{L^{\infty}_xL^2_p}
\|f^{\varepsilon}\|\|D_xf^{\varepsilon}\|\\
&\hspace{0.5cm}\lesssim\frac{\kappa}{\varepsilon}\|{\{\bf I-P\}}D_xf^{\varepsilon}\|^2+\varepsilon\Big(\sum_{i=1}^{2k-1}\varepsilon^{i-1} (1+t)^{i}\Big)^2
(\|f^{\varepsilon}\|^2+\|\nabla_xf^{\varepsilon}\|^2)\nonumber\\
&\hspace{0.5cm}+\frac{1}{\varepsilon}(1+t)^{-2\beta_0}\|f^{\varepsilon}\|^2.\nonumber
\end{align}
\end{proof}

Next we deal with the momentum derivative estimate for $f^{\varepsilon}$. Due to the Maxwellian structure in the Macroscopic part, it is
enough to perform momentum derivative estimate for ${\{\bf I-P\}}f^{\varepsilon}$. Moreover, as illustrated in the Introduction part, this technique is also necessary for us to make use of the strong dissipation term $\frac{Lf^{\varepsilon}}{\varepsilon}$ and close our energy estimates. Take microscopic projection onto \eqref{L20} to have
\begin{align}
& \partial_t{\{\bf I-P\}}f^{\varepsilon}+\hat{p}\cdot\nabla_x{\{\bf I-P\}}f^{\varepsilon}+{\{\bf I-P\}}\Big[\frac{\sqrt{\mathbf{M}} }{2 T_0}\Big(u^0\hat{p}-u\Big)\cdot\Big(E_R^{\varepsilon}+\hat{p} \times B_R^{\varepsilon} \Big)\Big]\nonumber\\
&-\Big(E_0+\hat{p} \times B_0 \Big)\cdot\nabla_p{\{\bf I-P\}}f^{\varepsilon}+\frac{Lf^{\varepsilon}}{\varepsilon}\nonumber\\
 &=\left({\bf P}\partial_tf^{\varepsilon}-\partial_t{\bf P}f^{\varepsilon}\right)-\frac{{\{\bf I-P\}}f^{\varepsilon}}{\sqrt{\mathbf{M}}}\Big[\partial_t+\hat{p}\cdot\nabla_x-\Big(E_0+\hat{p} \times B_0 \Big)\cdot\nabla_p\Big]\sqrt{\mathbf{M}}\nonumber\\
 &+\varepsilon^{k-1}\Gamma(f^{\varepsilon},f^{\varepsilon})+\sum_{i=1}^{2k-1}\varepsilon^{i-1}\big[\Gamma( \frac{F_i}{\sqrt{\mathbf{M}}}, f^{\varepsilon})+\Gamma(f^{\varepsilon}, \frac{F_i}{\sqrt{\mathbf{M}}})\big]+\varepsilon^k\Big(E_R^{\varepsilon}+\hat{p} \times B_R^{\varepsilon}\Big)\nonumber\\
 &\cdot\nabla_p{\{\bf I-P\}}f^{\varepsilon}-\varepsilon^k\frac{1 }{2 T_0}\Big(u^0\hat{p}-u\Big)\cdot\Big(E_R^{\varepsilon}+\hat{p} \times B_R^{\varepsilon}\Big){\{\bf I-P\}}f^{\varepsilon}\label{Hp0}\\
 &+\sum_{i=1}^{2k-1}\varepsilon^i\Big[\Big(E_i+\hat{p} \times B_i \Big)\cdot\nabla_p{\{\bf I-P\}}f^{\varepsilon}+{\{\bf I-P\}}\Big((E_R^{\varepsilon}+\hat{p} \times B_R^{\varepsilon} )\cdot\nabla_pF_i\Big)\Big]\nonumber\\
 &-\sum_{i=1}^{2k-1}\varepsilon^i\Big[\Big(E_i+\hat{p} \times B_i \Big)\cdot\frac{1}{2 T_0}\Big(u^0\hat{p}-u\Big){\{\bf I-P\}}f^{\varepsilon}\Big]\nonumber\\
 &+\varepsilon^{k}{\{\bf I-P\}}\bar{A}+[[{\bf P},\tau_{E,B}]]f^{\varepsilon},\nonumber
\end{align}
where $[[{\bf P},\tau_{E,B}]]={\bf P}\tau_{E,B}-\tau_{E,B}{\bf P}$ denotes the commutator of two operators ${\bf P}$ and $\tau_{E,B}$
given by
\begin{align}
\tau_{E,B}&=\hat{p}\cdot\nabla_x-\Big(E_0+\hat{p} \times B_0 \Big)\cdot\nabla_p\nonumber\\
&+\frac{1}{\sqrt{\mathbf{M}}}\Big[\partial_t+\hat{p}\cdot\nabla_x-\Big(E_0+\hat{p} \times B_0 \Big)\cdot\nabla_p\Big]\sqrt{\mathbf{M}}\nonumber\\
&-\varepsilon^k\Big(E_R^{\varepsilon}+\hat{p} \times B_R^{\varepsilon}\Big)\cdot\nabla_p+\frac{\varepsilon^k}{2 T_0}\Big(u^0\hat{p}-u\Big)\cdot\Big(E_R^{\varepsilon}+\hat{p} \times B_R^{\varepsilon}\Big)\nonumber\\
&-\sum_{i=1}^{2k-1}\varepsilon^i\Big(E_i+\hat{p} \times B_i \Big)\cdot\Big[\nabla_p-\frac{1 }{2 T_0}\Big(u^0\hat{p}-u\Big)\Big].\nonumber
\end{align}

The momentum derivative $\nabla_p{\{\bf I-P\}}f^{\varepsilon}$ can be estimated as follows:
\begin{proposition}\label{H1p}
For the remainders $(f^{\varepsilon}, E^{\varepsilon}_ R, B^{\varepsilon}_ R)$, it holds that
\begin{align}
&\frac{d}{dt}\|\nabla_p{\{\bf I-P\}}f^{\varepsilon}(t)\|^2+\Big(\frac{\delta_0}{2\varepsilon}-C\varepsilon^{k-2}\|h^{\varepsilon}\|_{\infty}\Big)\|\nabla_p{\{\bf I-P\}}f^{\varepsilon}\|^2\nonumber\\
&\hspace{1cm}\lesssim \left[(1+t)^{-\beta_0}+\varepsilon[1+{\mathcal I}_1(t)]+\varepsilon^{k}\|h^{\varepsilon}\|_{W^{1,\infty}}\right]
\left(\|f^{\varepsilon}\|^2_{H^1}+\|(E^{\varepsilon}_ R, B^{\varepsilon}_ R)\|^2_{H^1}\right)\label{H1p0}\\
& \hspace{1cm}+\frac{1}{\varepsilon}\|{\{\bf I-P\}}f^{\varepsilon}\|^2 +\varepsilon^{\frac{11}{4}}\|h^{\varepsilon}\|_{\infty}\|f^{\varepsilon}\|+\varepsilon^{k}{\mathcal I}_2(t)\|\nabla_pf^{\varepsilon}\|.\nonumber
\end{align}

\end{proposition}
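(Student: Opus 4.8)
The plan is to apply an arbitrary first-order momentum derivative $D_p$ to the microscopic equation \eqref{Hp0}, take the $L^2_{x,p}$ inner product of the result with $D_p{\{\bf I-P\}}f^{\varepsilon}$, and then estimate each resulting term, following the pattern of Propositions \ref{L2ener} and \ref{H1x}. Writing $g=D_p{\{\bf I-P\}}f^{\varepsilon}$, the time-derivative term yields $\tfrac12\tfrac{d}{dt}\|g\|^2$; the pure transport $c\hat{p}\cdot\nabla_x D_p{\{\bf I-P\}}f^{\varepsilon}$ paired with $g$ vanishes after integration by parts in $x$, since $\hat{p}$ is $x$-independent; and the linearized operator contributes $\tfrac1\varepsilon\langle D_pL{\{\bf I-P\}}f^{\varepsilon},g\rangle$, where I invoke the momentum-regularity coercivity of $L$ established in \cite{Guo-Strain-CMP-2012} (together with $\nu\approx1$) to get $\tfrac1\varepsilon\langle D_pL{\{\bf I-P\}}f^{\varepsilon},g\rangle\geq \tfrac{\delta_0}{\varepsilon}\|g\|^2-\tfrac{C}{\varepsilon}\|{\{\bf I-P\}}f^{\varepsilon}\|^2$, the last term being moved to the right-hand side of \eqref{H1p0}.

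The heart of the argument is the commutator term $c\langle (D_p\hat{p})\cdot\nabla_x{\{\bf I-P\}}f^{\varepsilon},g\rangle$ arising when $D_p$ falls on $\hat{p}$ in the transport term — precisely the term that would force exponential growth in a direct estimate of $\|\nabla_pf^{\varepsilon}\|$, cf. \eqref{transp}. Since $|D_p\hat{p}|\lesssim (p^0)^{-1}\lesssim1$, I bound it by Young's inequality with the singular weight $\kappa/\varepsilon$, namely $\leq\tfrac{\kappa}{\varepsilon}\|g\|^2+\tfrac{C\varepsilon}{\kappa}\|\nabla_x{\{\bf I-P\}}f^{\varepsilon}\|^2$: the first piece is absorbed into $\tfrac{\delta_0}{\varepsilon}\|g\|^2$ for $\kappa$ small (leaving $\tfrac{\delta_0}{2\varepsilon}$ after also absorbing the small $O((1+t)^{-\beta_0})\|g\|^2$ and $O(\varepsilon^{k-2}\|h^{\varepsilon}\|_{\infty})\|g\|^2$ contributions below), while the second piece is $\lesssim\varepsilon\|f^{\varepsilon}\|^2_{H^1}$ after $\|\nabla_x{\{\bf I-P\}}f^{\varepsilon}\|\lesssim\|f^{\varepsilon}\|_{H^1}$, the $\varepsilon$ factor being exactly what later absorbs the time growth over $t\in[0,\varepsilon^{-1/2}]$. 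The second-order momentum terms $-e_-(E_0+\hat{p}\times B_0)\cdot\nabla_pD_p{\{\bf I-P\}}f^{\varepsilon}$, $\varepsilon^k e_-(E_R^{\varepsilon}+\hat{p}\times B_R^{\varepsilon})\cdot\nabla_pD_p{\{\bf I-P\}}f^{\varepsilon}$ and $\sum_i\varepsilon^i e_-(E_i+\hat{p}\times B_i)\cdot\nabla_pD_p{\{\bf I-P\}}f^{\varepsilon}$ paired with $g$ all vanish after integration by parts in $p$, because $\nabla_p\cdot(E+\hat{p}\times B)=B\cdot(\nabla_p\times\hat{p})=0$; the companion pieces where $D_p$ hits the magnetic rotation, such as $(D_p\hat{p})\times B_0$ or $(D_p\hat{p})\times B_R^{\varepsilon}$, contribute $(1+t)^{-\beta_0}\|g\|^2$ and, via the bootstrap \eqref{assump} and Proposition \ref{LEBe}, strictly lower-order terms.

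For the remaining right-hand terms I reproduce the estimates of Propositions \ref{L2ener} and \ref{H1x}. The factors $\tfrac{1}{\sqrt{\mathbf{M}}}[\partial_t+c\hat{p}\cdot\nabla_x-e_-(E_0+\hat{p}\times B_0)\cdot\nabla_p]\sqrt{\mathbf{M}}$ are linear polynomials in $p$ with coefficients made of $\nabla_{t,x}(n_0,u,T_0,E_0,B_0)$, which decay like $(1+t)^{-\beta_0}$ by \eqref{decay}; after $D_p$ and pairing with $g$, splitting on $\{|p|\leq\sqrt{\kappa/\varepsilon}\}\cup\{|p|\geq\sqrt{\kappa/\varepsilon}\}$ and using $(1+|p|)^mf^{\varepsilon}\lesssim(1+|p|)^{m-\beta}h^{\varepsilon}$ with $\beta\geq8$ produces exactly the $(1+t)^{-\beta_0}(\|f^{\varepsilon}\|^2_{H^1}+\|(E^{\varepsilon}_R,B^{\varepsilon}_R)\|^2_{H^1})$, $\tfrac{\kappa}{\varepsilon}\|g\|^2$ and $\varepsilon^{\frac{11}{4}}\|h^{\varepsilon}\|_{\infty}\|f^{\varepsilon}\|$ contributions, and the $\sqrt{\mathbf{M}}$-weighted $E_R^{\varepsilon}$-terms give $\varepsilon\|(E^{\varepsilon}_R,B^{\varepsilon}_R)\|^2$ after Young. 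The nonlinear terms $\varepsilon^{k-1}D_p\Gamma(f^{\varepsilon},f^{\varepsilon})$ and $\sum_i\varepsilon^{i-1}D_p[\Gamma(F_i,f^{\varepsilon})+\Gamma(f^{\varepsilon},F_i)]$ are controlled by the weighted trilinear and momentum-derivative estimates for $\Gamma$ of \cite{Guo-Strain-CMP-2012}, together with \eqref{MJx}, the growth bounds of Theorem \ref{fn} collected into ${\mathcal I}_1,{\mathcal I}_2$, and \eqref{temp}; these produce the $-C\varepsilon^{k-2}\|h^{\varepsilon}\|_{\infty}\|g\|^2$ absorbed on the left and the factor $\varepsilon^k\|h^{\varepsilon}\|_{W^{1,\infty}}$ (the $W^{1,\infty}$ norm entering because $D_pf^{\varepsilon}$ carries $D_ph^{\varepsilon}$). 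Finally, after $D_p$ the two commutators $({\bf P}\partial_tf^{\varepsilon}-\partial_t{\bf P}f^{\varepsilon})$ and $[[{\bf P},\tau_{E,B}]]f^{\varepsilon}$ are polynomials in $p$ times $\sqrt{\mathbf{M}}$ with coefficients that are either $(1+t)^{-\beta_0}$-decaying derivatives of the Euler profile or carry factors $\varepsilon^i$ and $\varepsilon^k(E^{\varepsilon}_R,B^{\varepsilon}_R)$; invoking \eqref{assump} they reproduce the ${\mathcal I}_1$- and ${\mathcal I}_2$-terms and are subdominant. Collecting everything and taking $\varepsilon,\kappa$ small gives \eqref{H1p0}. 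The main obstacle is precisely the commutator $c\langle(D_p\hat{p})\cdot\nabla_x{\{\bf I-P\}}f^{\varepsilon},g\rangle$: the entire reason for differentiating the equation for ${\{\bf I-P\}}f^{\varepsilon}$ rather than for $f^{\varepsilon}$ is that only then is the strong dissipation $\tfrac1\varepsilon D_pL$ available to absorb the $\tfrac{\kappa}{\varepsilon}\|g\|^2$ it generates.
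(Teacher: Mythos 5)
Your proposal is correct and follows essentially the same route as the paper: apply $D_p$ to the micro-projected equation \eqref{Hp0}, pair with $D_p{\{\bf I-P\}}f^{\varepsilon}$, use the momentum-regularity coercivity of $L$ from \cite{Guo-Strain-CMP-2012} to produce $\tfrac{\delta_0}{\varepsilon}\|D_p{\{\bf I-P\}}f^{\varepsilon}\|^2 - \tfrac{C}{\varepsilon}\|{\{\bf I-P\}}f^{\varepsilon}\|^2$, and absorb the transport commutator $c\langle(D_p\hat p)\cdot\nabla_x{\{\bf I-P\}}f^{\varepsilon},D_p{\{\bf I-P\}}f^{\varepsilon}\rangle$ via Young's inequality into that dissipation plus an $\varepsilon\|f^{\varepsilon}\|^2_{H^1}$ error. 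You correctly identify that the Lorentz-force terms in second-order $\nabla_pD_p$ form vanish by $p$-integration by parts (the divergence of $E+\hat p\times B$ in $p$ is zero), which the paper uses implicitly, and you handle the remaining source terms, the $\varepsilon^{11/4}$-tail from the momentum cutoff $|p|\lessgtr\sqrt{\kappa/\varepsilon}$, and the $\Gamma$-terms in the same manner as Propositions \ref{L2ener} and \ref{H1x}, matching the paper's estimates.
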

\begin{proof}
 We apply $D_p$ to \eqref{Hp0} and take the $L^2$ inner product with $D_p{\{\bf I-P\}}f^{\varepsilon}$
on both sides to have
\begin{align}
& \frac{1}{2}\frac{d}{dt}\|D_p{\{\bf I-P\}}f^{\varepsilon}(t)\|^2+\frac{1}{\varepsilon}\langle D_p Lf^{\varepsilon}, D_p{\{\bf I-P\}}f^{\varepsilon}\rangle\nonumber\\
&+\langle (D_p \hat{p})\cdot\nabla_x{\{\bf I-P\}}f^{\varepsilon}, D_p{\{\bf I-P\}}f^{\varepsilon}\rangle\nonumber\\
&+\Big\langle D_p{\{\bf I-P\}}\Big[\frac{(u^0\hat{p}-u)}{2T_0}\cdot\Big(E_R^{\varepsilon}+\hat{p} \times B_R^{\varepsilon} \Big)\Big],D_p{\{\bf I-P\}}f^{\varepsilon}\Big\rangle\nonumber\\
&-\Big\langle D_p\Big[\Big(E_0+\hat{p} \times B_0 \Big)\cdot\nabla_p{\{\bf I-P\}}f^{\varepsilon}\Big],D_p{\{\bf I-P\}}f^{\varepsilon}\Big\rangle\nonumber\\
&=-\Big\langle D_p\Big[\frac{{\{\bf I-P\}}f^{\varepsilon}}{\sqrt{\mathbf{M}}}\Big(\partial_t+\hat{p}\cdot\nabla_x-(E_0+\hat{p} \times B_0 )\cdot\nabla_p\Big)\sqrt{\mathbf{M}}\Big],D_p{\{\bf I-P\}}f^{\varepsilon}\Big\rangle\nonumber\\
&+\Big\langle D_p\left({\bf P}\partial_tf^{\varepsilon}-\partial_t{\bf P}f^{\varepsilon}\right),D_p{\{\bf I-P\}}f^{\varepsilon}\Big\rangle+\varepsilon^{k-1}\Big\langle D_p\Gamma(f^{\varepsilon},f^{\varepsilon}),D_p{\{\bf I-P\}}f^{\varepsilon}\Big\rangle\nonumber\\
 &+\sum_{i=1}^{2k-1}\varepsilon^{i-1}\Big\langle D_p
 \big[\Gamma( \frac{F_i}{\sqrt{\mathbf{M}}}, f^{\varepsilon})+\Gamma(f^{\varepsilon},  \frac{F_i}{\sqrt{\mathbf{M}}})\big],D_p{\{\bf I-P\}}f^{\varepsilon}\Big\rangle\nonumber\\
 &+\varepsilon^k\Big\langle \frac{1 }{2 T_0}D_p\Big[(u^0\hat{p}-u)\cdot\Big(E_R^{\varepsilon}+\hat{p} \times B_R^{\varepsilon}\Big){\{\bf I-P\}}f^{\varepsilon}\Big],D_p{\{\bf I-P\}}f^{\varepsilon}\Big\rangle\label{H1p01}\\
 &+\varepsilon^k\Big\langle D_p\Big[\Big(E_R^{\varepsilon}+\hat{p} \times B_R^{\varepsilon}\Big)\cdot\nabla_p{\{\bf I-P\}}f^{\varepsilon}\Big],D_p{\{\bf I-P\}}f^{\varepsilon}\Big\rangle\nonumber\\
 &+\sum_{i=1}^{2k-1}\varepsilon^i\Big\langle D_p\Big[\Big(E_i+\hat{p} \times B_i \Big)\cdot\nabla_p{\{\bf I-P\}}f^{\varepsilon}\Big],D_p{\{\bf I-P\}}f^{\varepsilon}\Big\rangle\nonumber\\
 &+\sum_{i=1}^{2k-1}\varepsilon^i\Big\langle D_p{\{\bf I-P\}}\Big[ \Big(E_R^{\varepsilon}+\hat{p} \times B_R^{\varepsilon} \Big)\cdot\frac{\nabla_pF_i}{\sqrt{\mathbf{M}}}\Big],D_p{\{\bf I-P\}}f^{\varepsilon}\Big\rangle\nonumber\\
 &-\sum_{i=1}^{2k-1}\varepsilon^i\Big\langle D_p\Big[\Big(E_i+\hat{p} \times B_i \Big)\cdot\frac{1}{2 T_0}\Big(u^0\hat{p}-u\Big){\{\bf I-P\}}f^{\varepsilon}\Big],D_p{\{\bf I-P\}}f^{\varepsilon}\Big\rangle\nonumber\\
 &+\varepsilon^{k}\Big\langle D_p{\{\bf I-P\}}\bar{A}, D_p{\{\bf I-P\}}f^{\varepsilon}\Big\rangle+\Big\langle D_p{\{\bf I-P\}}\left([[{\bf P},\tau_{E,B}]]f^{\varepsilon}\right), D_p{\{\bf I-P\}}f^{\varepsilon}\Big\rangle.
\nonumber
\end{align}
For brevity, we only prove the second, fourth terms in the left hand side of \eqref{H1p01} and the first, second, third, fourth terms and the last term in the right hand side.
 We first estimate the second term in the left hand side of \eqref{H1p01}. Similar to \eqref{MJx}, one has
\begin{equation}\label{MJp}
\sqrt{\mathbf{M}}+|D_p\sqrt{\mathbf{M}}|\lesssim J_M^{\frac14}.
\end{equation}
Note that energy estimates for the momentum derivative of the collision frequency $\nu$ and operator $K$, which correspond to a global Maxwellian were  proved in Propositions $7, 8$ \cite{Guo-Strain-CMP-2012} via a splitting technique and different expressions of the relativistic collision operator in the Glassey-Strauss frame and the center of mass frame. We combine \eqref{MJp} and Propositions $7, 8$ in \cite{Guo-Strain-CMP-2012} to estimate  the third term in the left hand side of \eqref{H1p01} as
$$\frac{1}{\varepsilon}\langle D_p Lf^{\varepsilon}, D_p{\{\bf I-P\}}f^{\varepsilon}\rangle\geq\frac{7\delta_0}{8\varepsilon}\|D_p{\{\bf I-P\}}f^{\varepsilon}\|^2-\frac{C}{\varepsilon}\|{\{\bf I-P\}}f^{\varepsilon}\|^2.$$
For the fourth term in the left hand side, it can be estimated by
\begin{align*}
\frac{\delta_0}{8\varepsilon}\|D_p{\{\bf I-P\}}f^{\varepsilon}\|^2+C\varepsilon\left(\|E_R^{\varepsilon}\|^2+\|B_R^{\varepsilon}\|^2\right).
\end{align*}
For the first term in the right hand side, we integrate by parts w.r.t. $p$ when $D_p$ doesn't hit $f^{\varepsilon}$ and bound it by
$$
\frac{\kappa}{\varepsilon}\|D_p{\{\bf I-P\}}f^{\varepsilon}\|^2_{H^1}+\varepsilon^{\frac{11}{4}}\|h^{\varepsilon}\|_{\infty}\|f^{\varepsilon}\|.$$
Noting
$$\|{\bf P}\partial_tf^{\varepsilon}-\partial_t{\bf P}f^{\varepsilon}\|\lesssim (\|\partial_tn_0\|_{\infty}+\|\partial_tu\|_{\infty})\|{\bf P}f^{\varepsilon}\|,$$
we can bound the second term in the right hand side of \eqref{H1p01} by
$$\frac{\delta_0}{8\varepsilon}\|D_p{\{\bf I-P\}}f^{\varepsilon}\|^2+C\varepsilon(1+t)^{-2\beta_0}\|f^{\varepsilon}\|^2.$$
By Theorem 2 in \cite{Guo-Strain-CMP-2012} and \eqref{MJp}, we can bound the third term in the right hand side  by
\begin{align*}
&\varepsilon^{k-2}\|h^{\varepsilon}\|_{\infty}\|\nabla_p{\{\bf I-P\}}f^{\varepsilon}\|^2+C\varepsilon^{k}\|h^{\varepsilon}\|_{\infty}
\|f^{\varepsilon}\|^2_{H^1}.\nonumber
\end{align*}
Similarly, we control the fourth term in the right hand side by
\begin{align*}
&\frac{\delta_0}{8\varepsilon}\|D_p{\{\bf I-P\}}f^{\varepsilon}\|^2+C\varepsilon\Big[\sum_{i=1}^{2k-1}\varepsilon^{i-1}
\left(\Big\|\frac{F_i}{\bf M}\Big\|_{L^{\infty}_xL^2_p}+\Big\|\frac{D_pF_i}{\bf M}\Big\|_{L^{\infty}_xL^2_p}\right)\Big]^2\|f^{\varepsilon}\|^2_{H^1}\\
&\hspace{0.5cm} \leq
\frac{\delta_0}{8\varepsilon}\|D_p{\{\bf I-P\}}f^{\varepsilon}\|^2+C\varepsilon\Big(\sum_{i=1}^{2k-1}\varepsilon^{i-1} (1+t)^{i}\Big)^2
\|f^{\varepsilon}\|^2_{H^1}.
\end{align*}
Similar to the above estimates, the upper bound of the  last term in the right hand side is
$$\frac{\delta_0}{8\varepsilon}\|D_p{\{\bf I-P\}}f^{\varepsilon}\|^2+\Big[\varepsilon[1+{\mathcal I}_1(t)]+\varepsilon^{k}\|h^{\varepsilon}\|_{\infty}\Big]
\left(\|f^{\varepsilon}\|^2_{H^1}+\|(E^{\varepsilon}_ R, B^{\varepsilon}_ R)\|^2_{H^1}\right).$$
\end{proof}
Finally, we multiply \eqref{L2ener0} in Proposition \ref{L2ener} by a sufficiently large constant $C_0$, combine it and
\eqref{H1ener0} in Proposition \ref{H1x}, \eqref{H1p0} in Proposition \ref{H1p}, and use \eqref{assump} to obtain
\begin{proposition}\label{TH1}
Let $0\leq t\leq T\leq \varepsilon^{-\frac{1}{2}}$. Under the assumptions \eqref{assump}, one has
\begin{align}
&\frac{d}{dt}\Big[C_0\Big(\Big\|\sqrt{\frac{2T_0}{u^0}}f^{\varepsilon}(t)\Big\|^2+\|[E^{\varepsilon}_ R(t), B^{\varepsilon}_ R(t)]\|^2\Big)+\Big\|\sqrt{\frac{2T_0}{u^0}}\nabla_xf^{\varepsilon}(t)\Big\|^2\nonumber\\
&\hspace{0.5cm}+\|[\nabla_xE^{\varepsilon}_ R(t), \nabla_xB^{\varepsilon}_ R(t)]\|^2+\|\nabla_p{\{\bf I-P\}}f^{\varepsilon}(t)\|^2\Big]\nonumber\\
&\hspace{0.5cm}\lesssim \left[(1+t)^{-\beta_0}+\varepsilon{\mathcal I}_1(t)+\varepsilon^{k}\|h^{\varepsilon}\|_{W^{1,\infty}}\right]
\left(\|f^{\varepsilon}\|^2_{H^1}+\|(E^{\varepsilon}_ R, B^{\varepsilon}_ R)\|^2_{H^1}\right)\label{H1total}\\
&\hspace{0.5cm}+\frac{1}{\varepsilon}(1+t)^{-2\beta_0}\|f^{\varepsilon}\|^2 +(1+t)^{-\beta_0}[\varepsilon^{k-1}\|h^{\varepsilon}\|_{\infty}]^2\|f^{\varepsilon}\|^2\nonumber\\
&\hspace{0.5cm}+[\varepsilon^{\frac{7}{4}}(1+t)^{-\beta_0}+\varepsilon^{\frac{11}{4}}]\|h^{\varepsilon}\|_{\infty}\|f^{\varepsilon}\|+\varepsilon^{k}
{\mathcal I}_2(t)\|f^{\varepsilon}\|_{H^1}.\nonumber
\end{align}

\end{proposition}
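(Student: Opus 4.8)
\emph{Proof proposal.} The plan is to obtain \eqref{H1total} as a weighted sum of the three differential inequalities already established: multiply the basic $L^2$ inequality \eqref{L2ener0} of Proposition \ref{L2ener} by a large constant $C_0$ (to be fixed), and add to it the spatial-derivative inequality \eqref{H1ener0} of Proposition \ref{H1x} and the momentum-derivative inequality \eqref{H1p0} of Proposition \ref{H1p}. The three left-hand sides add up precisely to $\frac{d}{dt}$ of
\[
C_0\Big(\Big\|\sqrt{\tfrac{8\pi k_BT_0}{u^0}}f^{\varepsilon}\Big\|^2+\|[E^{\varepsilon}_R,B^{\varepsilon}_R]\|^2\Big)+\Big\|\sqrt{\tfrac{8\pi k_BT_0}{u^0}}\nabla_xf^{\varepsilon}\Big\|^2+\|[\nabla_xE^{\varepsilon}_R,\nabla_xB^{\varepsilon}_R]\|^2+\|\nabla_p{\{\bf I-P\}}f^{\varepsilon}\|^2
\]
plus the three dissipation terms, of the form $\big(\tfrac{2\pi k_B\delta_0T_M}{\varepsilon}-C\varepsilon^{k-2}\|h^{\varepsilon}\|_{\infty}\big)\big(C_0\|{\{\bf I-P\}}f^{\varepsilon}\|^2+\|\nabla_x{\{\bf I-P\}}f^{\varepsilon}\|^2\big)$ and $\big(\tfrac{\delta_0}{2\varepsilon}-C\varepsilon^{k-2}\|h^{\varepsilon}\|_{\infty}\big)\|\nabla_p{\{\bf I-P\}}f^{\varepsilon}\|^2$. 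First I would note that, under the bootstrap assumptions \eqref{assump}, $\varepsilon^{k-1}\|h^{\varepsilon}\|_{\infty}\lesssim\varepsilon^{k-1}\varepsilon^{-13/8}=\varepsilon^{k-21/8}\leq\varepsilon^{3/8}$ for $k\geq3$, so that $C\varepsilon^{k-2}\|h^{\varepsilon}\|_{\infty}=o(\varepsilon^{-1})$ as $\varepsilon\to0$ and each of these dissipation coefficients is bounded below by $\tfrac{c_\ast}{\varepsilon}$ for a fixed $c_\ast>0$ independent of $C_0$.

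Once this is in hand, the only term on the combined right-hand side that is genuinely of dissipative size $O(\varepsilon^{-1})$ and acts on a microscopic quantity is the contribution $\tfrac1\varepsilon\|{\{\bf I-P\}}f^{\varepsilon}\|^2$ coming from \eqref{H1p0}: it is not controlled by the $\nabla_p$-dissipation attached to that inequality, and it is exactly here that the large constant enters. Choosing $C_0$ large enough that $C_0\,\tfrac{2\pi k_B\delta_0T_M}{\varepsilon}\geq \tfrac{2}{\varepsilon}+C_0\,C\varepsilon^{k-2}\|h^{\varepsilon}\|_{\infty}$ for all small $\varepsilon$, we absorb $\tfrac1\varepsilon\|{\{\bf I-P\}}f^{\varepsilon}\|^2$ into the $C_0$-weighted zeroth-order dissipation, and the remaining total dissipation on the left is nonnegative and may simply be discarded for the purpose of \eqref{H1total}. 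The error terms of the form $\tfrac\kappa\varepsilon\|\nabla_x{\{\bf I-P\}}f^{\varepsilon}\|^2$ and $\tfrac\kappa\varepsilon\|\nabla_p{\{\bf I-P\}}f^{\varepsilon}\|^2$ generated inside the proofs of Propositions \ref{H1x} and \ref{H1p} have already been absorbed there by their own first-order dissipation, with $\kappa$ small relative to $\delta_0T_M$; since that choice does not depend on $C_0$, it is compatible with the present selection of $C_0$.

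It then remains to collect the surviving inhomogeneous terms. The time-weighted coefficients $(1+t)^{-\beta_0}$, $(1+t)^{-\beta_0}(1+{\mathcal I}_1(t))$, $\varepsilon{\mathcal I}_1(t)$, $\varepsilon(1+t)^{-\beta_0}$ and $\varepsilon^{k}\|h^{\varepsilon}\|_{W^{1,\infty}}$, all acting on $\|f^{\varepsilon}\|^2+\|\nabla_xf^{\varepsilon}\|^2+\|[E^{\varepsilon}_R,B^{\varepsilon}_R]\|_{H^1}^2$, consolidate into $\big[[(1+t)^{-\beta_0}+\varepsilon](1+{\mathcal I}_1(t))+\varepsilon^{k}\|h^{\varepsilon}\|_{W^{1,\infty}}\big](\|f^{\varepsilon}\|_{H^1}^2+\|(E^{\varepsilon}_R,B^{\varepsilon}_R)\|_{H^1}^2)$; the terms $\tfrac1\varepsilon(1+t)^{-2\beta_0}\|f^{\varepsilon}\|^2$ and $(1+t)^{-\beta_0}[\varepsilon^{k-1}\|h^{\varepsilon}\|_{\infty}]^2\|f^{\varepsilon}\|^2$ from \eqref{H1ener0} are kept as they stand (these carry the eventual $\varepsilon^{-1/2}$ loss but only additively, since $\|f^{\varepsilon}\|^2$ is the $L^2$ part already controlled); the products $[\varepsilon^{7/4}(1+t)^{-\beta_0}+\varepsilon^{11/4}]\|h^{\varepsilon}\|_{\infty}\|f^{\varepsilon}\|$ likewise; and the collision-tail terms $\varepsilon^{k}{\mathcal I}_2(t)(\|f^{\varepsilon}\|+\|\nabla_xf^{\varepsilon}\|+\|\nabla_pf^{\varepsilon}\|)$ are bounded by $\varepsilon^{k}{\mathcal I}_2(t)\|f^{\varepsilon}\|_{H^1}$ on using $\|\nabla_pf^{\varepsilon}\|\lesssim\|\nabla_p{\{\bf I-P\}}f^{\varepsilon}\|+\|f^{\varepsilon}\|$ (the $\nabla_p{\bf P}f^{\varepsilon}$ piece being a bounded multiple of the macroscopic fields). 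This is exactly the right-hand side of \eqref{H1total}.

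The computation above is essentially bookkeeping; the one point that requires care is the hierarchy of parameter choices — $\delta_0$ and $T_M$ are fixed by the model, the small constants $\kappa$ inside \eqref{H1ener0}--\eqref{H1p0} are then fixed depending only on them, next $C_0$ is taken large depending on $\delta_0,T_M$, and finally $\varepsilon_0$ is chosen small depending on $C_0,\delta_0,T_M,k$ so that, through \eqref{assump}, all the $C\varepsilon^{k-2}\|h^{\varepsilon}\|_{\infty}$ corrections stay $o(\varepsilon^{-1})$ and do not spoil coercivity. No new analytic input is needed beyond Propositions \ref{L2ener}, \ref{H1x} and \ref{H1p}; the substantive work lies there, in particular in the momentum-derivative estimate \eqref{H1p0}, whose dissipation $\tfrac1\varepsilon\nabla_p(Lf^{\varepsilon})$ is what renders the transport commutator $\langle(D_p\hat p)\cdot\nabla_xf^{\varepsilon},\nabla_p{\{\bf I-P\}}f^{\varepsilon}\rangle$ harmless, as discussed in the Introduction.
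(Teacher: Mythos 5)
Your proposal is correct and follows the same approach the paper takes: it, too, multiplies the basic $L^2$ inequality \eqref{L2ener0} by a large constant $C_0$, adds the spatial and momentum derivative inequalities \eqref{H1ener0} and \eqref{H1p0}, absorbs the $\tfrac1\varepsilon\|\{\mathbf I-\mathbf P\}f^{\varepsilon}\|^2$ from the momentum estimate into the $C_0$-weighted dissipation, and uses the bootstrap bounds \eqref{assump} to keep the dissipation coefficients positive. Your bookkeeping of the parameter hierarchy and the observation that $\|\nabla_p f^{\varepsilon}\|\lesssim\|\nabla_p\{\mathbf I-\mathbf P\}f^{\varepsilon}\|+\|f^{\varepsilon}\|$ (so that $\|f^{\varepsilon}\|_{H^1}$ and the functional on the left are mutually comparable) are also exactly the points the paper relies on.
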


\section{Proof of the main result}\label{mr}

\setcounter{equation}{0}
This section is devoted to the proof of  Theorem \ref{result}. \\
\begin{proof}
 For $0\leq t\leq T\leq \frac{1}{2}|\ln\varepsilon|^{\frac{1}{3}}$, one has
 \begin{equation}\label{time}
 \begin{split}
 {\mathcal I}_1(t)&\lesssim\sum_{1\leq i\leq 2k-1}(1+t)^i\varepsilon^{i-1}\Big(\sum_{1\leq i\leq 2k-1}(1+t)^i\varepsilon^{i-1}+1\Big)\lesssim (1+t)^2\lesssim |\ln\varepsilon| ,\\
 \varepsilon^k{\mathcal I}_2(t)&\lesssim\varepsilon^k\sum_{1+2k\leq i+j\leq 4k-2}\varepsilon^{-1-2k}[(1+t)\varepsilon]^{i+j}\\
 &\lesssim\sum_{1+2k\leq i+j\leq 4k-2}\varepsilon^{i+j-k-1}|\ln\varepsilon|^{\frac{i+j}{3}}\lesssim \varepsilon^{k-1} .
 \end{split}
 \end{equation}

 We combine the a priori assumptions \eqref{assump}, \eqref{L2ener0} and Proposition \ref{linfty} to have
\begin{align}
&\frac{d}{dt}\Big(\Big\|\sqrt{\frac{2T_0}{u^0}}f^{\varepsilon}(t)\Big\|^2+\|[E^{\varepsilon}_ R(t), B^{\varepsilon}_ R(t)]\|^2\Big)+\frac{\delta_0T_M}{4\varepsilon}\|{\{\bf I-P\}}f^{\varepsilon}\|^2\nonumber\\
&\hspace{0.5cm}\lesssim \left[(1+t)^{-\beta_0}+\varepsilon|\ln\varepsilon|\right]
\left(\|f^{\varepsilon}\|^2+\|(E^{\varepsilon}_ R, B^{\varepsilon}_ R)\|^2\right)+\varepsilon\|f^{\varepsilon}\|\nonumber.
\end{align}

Then, for $T\leq\frac{1}{2}|\ln\varepsilon|^{\frac{1}{3}}$, Gr\"{o}nwall's inequality yields
\begin{align}\label{ulf}
&\|f^{\varepsilon}(t)\|+\|[E^{\varepsilon}_ R(t), B^{\varepsilon}_ R(t)]\|+1\lesssim \|f^{\varepsilon}(0)\|+\|[E^{\varepsilon}_ R(0), B^{\varepsilon}_ R(0)]\|+1.
\end{align}
Here we used $\int_0^t(1+s)^{-\beta_0}ds\lesssim1$.

Combining  \eqref{H1total} and \eqref{ulf}, we use the a priori assumptions \eqref{assump} again to have
\begin{align}
&\frac{d}{dt}\Big[C_0\Big(\Big\|\sqrt{\frac{2T_0}{u^0}}f^{\varepsilon}(t)\Big\|^2+\|[E^{\varepsilon}_ R(t), B^{\varepsilon}_ R(t)]\|^2\Big)+\Big\|\sqrt{\frac{2T_0}{u^0}}\nabla_xf^{\varepsilon}(t)\Big\|^2\nonumber\\
&\hspace{0.5cm}+\|[\nabla_xE^{\varepsilon}_ R(t), \nabla_xB^{\varepsilon}_ R(t)]\|^2+\|\nabla_p{\{\bf I-P\}}f^{\varepsilon}(t)\|^2\Big]\nonumber\\
&\hspace{0.5cm}\lesssim \left[(1+t)^{-\beta_0}+\varepsilon|\ln \varepsilon|\right]\left(\|f^{\varepsilon}\|^2_{H^1}+\|(E^{\varepsilon}_ R, B^{\varepsilon}_ R)\|^2_{H^1}+\frac{1}{\varepsilon}\right),\nonumber
\end{align}

Noting
\begin{align*}
&\|f^{\varepsilon}\|^2_{H^1}+\|(E^{\varepsilon}_ R, B^{\varepsilon}_ R)\|^2_{H^1}\approx \Big\|\sqrt{\frac{2T_0}{u^0}}f^{\varepsilon}\Big\|^2+\|(E^{\varepsilon}_ R, B^{\varepsilon}_ R)\|^2\nonumber\\
&\hspace{0.5cm}+\Big\|\sqrt{\frac{2T_0}{u^0}}\nabla_xf^{\varepsilon}\Big\|^2+\|(\nabla_xE^{\varepsilon}_ R, \nabla_xB^{\varepsilon}_ R)\|^2+\|\nabla_p{\{\bf I-P\}}f^{\varepsilon}\|^2,
\end{align*}
we further apply Gr\"{o}nwall's inequality to obtain that for $t\leq \frac{1}{2}|\ln\varepsilon|^{\frac{1}{3}}$,
\begin{align}
&\|f^{\varepsilon}(t)\|_{H^1}
+\|[E^{\varepsilon}_ R(t), B^{\varepsilon}_ R(t)]\|_{H^1}+\frac{1}{\sqrt{\varepsilon}}\lesssim
\|f^{\varepsilon}(0)\|_{H^1}
+\|[E^{\varepsilon}_ R(0), B^{\varepsilon}_ R(0)]\|_{H^1}+\frac{1}{\sqrt{\varepsilon}}.\label{result12}
\end{align}
With the uniform $L^2$ estimates \eqref{ulf} and \eqref{result12} in hands, now we turn to the $L^{\infty}$ estimates of the remainders $h^{\varepsilon}, E_R^{\varepsilon}, B_R^{\varepsilon}$. From \eqref{LEB1} and \eqref{ulf}, we have
\begin{align}
\|[E^{\varepsilon}_R(t), B^{\varepsilon}_R(t)]\|_{\infty}\lesssim &\varepsilon^{-\frac{9}{8}}|\ln \varepsilon|\big(\|[E^{\varepsilon}_R(0), B^{\varepsilon}_R(0)]\|_{\infty}+\|h^{\varepsilon}(0)\|_{\infty}\big)\nonumber\\
&+\varepsilon^{-\frac{21}{8}}|\ln \varepsilon|\big( \|f^{\varepsilon}(0)\|+\|[E^{\varepsilon}_ R(0), B^{\varepsilon}_ R(0)]\|+1\big),\label{uEB1}
\end{align}
for $t\in [0,\frac{1}{2}|\ln\varepsilon|^{\frac{1}{3}}]$. Applying \eqref{ulf} and \eqref{uEB1} in Proposition \ref{linfty} gives
\begin{align}
\sup_{0\leq t\leq \frac{1}{2}|\ln\varepsilon|^{\frac{1}{3}}}\|h^{\varepsilon}(t)\|_{\infty}\lesssim& \varepsilon^{-\frac{1}{8}}|\ln \varepsilon|\big(\|[E^{\varepsilon}_R(0), B^{\varepsilon}_R(0)]\|_{\infty}+\|h^{\varepsilon}(0)\|_{\infty}\big)\nonumber\\
&+\varepsilon^{-\frac{13}{8}}|\ln \varepsilon|\big( \|f^{\varepsilon}(0)\|+\|[E^{\varepsilon}_ R(0), B^{\varepsilon}_ R(0)]\|+1\big).\nonumber
\end{align}
Next we further derive the uniform $W^{1,\infty}$ norm estimates for the electromagnetic field. Collecting  the estimate in Proposition \ref{W1infty} and \eqref{result12} in  \eqref{TWEB1} gives
\begin{align*}
&\|[E^{\varepsilon}_R(t), B^{\varepsilon}_R(t)]\|_{W^{1,\infty}}\nonumber\\
&\leq C_2\Big[
|\ln \varepsilon|^{\frac{1}{3}}\int_0^t\sup_{0\leq\tau\leq s}\|[E^{\varepsilon}_R(\tau), B^{\varepsilon}_R(\tau)]\|_{W^{1,\infty}}\,ds\nonumber\\
&\hspace{0.5cm}+\varepsilon^{-\frac{17}{8}}|\ln \varepsilon|^2\big(\|[E^{\varepsilon}_R(0), B^{\varepsilon}_R(0)]\|_{W^{1,\infty}}+\|h^{\varepsilon}(0)\|_{W^{1,\infty}}\big)\\
 &\hspace{0.5cm}+\varepsilon^{-\frac{5}{2}}|\ln \varepsilon|^{\frac{1}{3}}\big(\|f^{\varepsilon}(0)\|_{H^1}
+\|[E^{\varepsilon}_ R(0), B^{\varepsilon}_ R(0)]\|_{H^1}+\varepsilon^{-\frac{1}{2}}\big)\\
 &\hspace{0.5cm}+\varepsilon^{-\frac{29}{8}}|\ln \varepsilon|^2\big(\|f^{\varepsilon}(0)\|
+\|[E^{\varepsilon}_ R(0), B^{\varepsilon}_ R(0)]\|+1\big)\Big]\nonumber
\end{align*}
for some constant $C_2>1$ and $t\in [0,\frac{1}{2}|\ln\varepsilon|^{\frac{1}{3}}]$. Then, for $\varepsilon$
sufficiently small, we have $C_2\leq \frac{1}{2}|\ln\varepsilon|^{\frac{1}{3}}$ and further apply Gr\"{o}nwall's inequality to get
\begin{align*}
&\sup_{0\leq t\leq \frac{1}{2}|\ln\varepsilon|^{\frac{1}{3}}}\|[E^{\varepsilon}_R(t), B^{\varepsilon}_R(t)]\|_{W^{1,\infty}}\nonumber\\
&\leq C_2\Big[\varepsilon^{-\frac{9}{4}}|\ln \varepsilon|^2\big(\|[E^{\varepsilon}_R(0), B^{\varepsilon}_R(0)]\|_{W^{1,\infty}}+\|h^{\varepsilon}(0)\|_{W^{1,\infty}}\big)\\
 &\hspace{0.5cm}+\varepsilon^{-\frac{21}{8}}|\ln \varepsilon|\big(\|f^{\varepsilon}(0)\|_{H^1}
+\|[E^{\varepsilon}_ R(0), B^{\varepsilon}_ R(0)]\|_{H^1}\big)\\
&\hspace{0.5cm}+\varepsilon^{-\frac{15}{4}}|\ln \varepsilon|^2\big(\|f^{\varepsilon}(0)\|
+\|[E^{\varepsilon}_ R(0), B^{\varepsilon}_ R(0)]\|+1\big)\Big]\nonumber
\end{align*}
Finally, we use this estimate in Proposition \ref{W1infty} to have
\begin{align*}
\sup_{0\leq t\leq \frac{1}{2}|\ln\varepsilon|^{\frac{1}{3}}}\|h^{\varepsilon}(t)\|_{W^{1,\infty}}\lesssim& \varepsilon^{-\frac{5}{4}}|\ln \varepsilon|^2\big(\|[E^{\varepsilon}_R(0), B^{\varepsilon}_R(0)]\|_{W^{1,\infty}}+\|h^{\varepsilon}(0)\|_{W^{1,\infty}}\big)\\
 &+\varepsilon^{-\frac{13}{8}}|\ln \varepsilon|\big(\|f^{\varepsilon}(0)\|_{H^1}
+\|[E^{\varepsilon}_ R(0), B^{\varepsilon}_ R(0)]\|_{H^1}\big)\\
&+\varepsilon^{-\frac{11}{4}}|\ln \varepsilon|^2\big(\|f^{\varepsilon}(0)\|
+\|[E^{\varepsilon}_ R(0), B^{\varepsilon}_ R(0)]\|+1\big).
\end{align*}
We collect  above estimates together
to obtain \eqref{theo1}. Note that \eqref{theo1} implies the assumptions in \eqref{assump} and these assumptions can be verified by a continuity argument for all time $t\leq T\leq \frac{1}{2}|\ln\varepsilon|^{\frac{1}{3}}$.
\end{proof}

%
%
\section {Appendix}\label{App}

\setcounter{equation}{0}
Our Appendix is about three problem: the derivation of the kernel of operator  $K_{\mathbf{M}}$ in \eqref{Koper}, estimates of the kernels related to $K_1$ and $K_2$, and the construction and regularity estimates of the coefficients in the Hilbert expansion \eqref{expan}.

\subsection {Appendix 1: Derivation of the kernel $k_2$}\label{App1}

In this part, we derive the kernel $k_2$ of the operator $K_2$. For $K_i, (i=1, 2)$ given in \eqref{Koper}, their kernels are defined as
$$K_i(f)=\int_{\mathbb{R}^3}dq k_i(p,q)f(q),\qquad i=1,2.$$
Following the derivation of $k_2(p,q)$ for a global Maxwellian case in the Appendix of \cite{Strain-CMP-2010}, we can obtain the following expression of $k_2(p,q)$ for our case:
\begin{equation}\label{k2}
k_2(p,q)=\frac{C_1s^{\frac{3}{2}}}{gp^0q^0}U_1(p,q)\exp\{-U_2(p,q)\},
\end{equation}
where $C_1$ is some positive constant,  $U_2(p,q)$ and $U_1(p,q)$ are smooth functions satisfying
\begin{align*}
U_2(p,q)&=[1+O(1)|u|]\frac{\sqrt{s}|p-q|}{2T_0g},\\
U_1(p,q)&=\frac{1}{U_2(p,q)}+\frac{(p^0+q^0)[1+O(1)|u|]}{2[U_2(p,q)]^2}+\frac{(p^0+q^0)[1+O(1)|u|]}{2[U_2(p,q)]^3}.
\end{align*}
  The  proof of \eqref{k2} can be proceeded similarly as in the Appendix \cite{Strain-CMP-2010} except for the estimation of additional terms w.r.t. the velocity $u$, which arises due to the local Maxwellian $\bf{M}$.

As in \cite{Strain-CMP-2010}, we first write the operator $K_2$ as
\begin{align*}
K_2(f)=\frac{1}{p^0}\int_{\mathbb R^3}\frac{dq}{q^0}\int_{\mathbb R^3}\frac{dq'}{q'^{0}}\int_{\mathbb R^3}\frac{dp'}{p'^{0}}W\sqrt{\mathbf{M}}(q)[\sqrt{\mathbf{M}}(q')f(p')+\sqrt{\mathbf{M}}(p')f(q')].
  \end{align*}
After the same exchanges of  variables $q, p', q'$ and changes of integration variables as in \cite{Strain-CMP-2010}, we obtain the following form of kernel $k_2$
\begin{align*}
 k_2(p,q)=\frac{C}{p^0q^{0}}\int_{\mathbb R^3}\frac{d\bar{p}}{\bar{p}^0}\delta((q^{\mu}-p^{\mu})\bar{p}_{\mu})\bar{s}
  \exp\Big\{\frac{u^{\mu}\bar{p}_{\mu} }{2T_0}\Big\},
  \end{align*}
where the integration variable $\bar{p}_{\mu}=p'_{\mu}+q'_{\mu}$ and $\bar{s}=\bar{g}^2+4$ with
$$\bar{g}^2=g^2+\frac{1}{2}(p^{\mu}+q^{\mu})(p_{\mu}+q_{\mu}-\bar{p}_{\mu}).$$
 Introduce a Lorentz transformation $\Lambda$ which maps into the center-of-momentum system:
  $$A^{\mu}=\Lambda^{\mu}_{\nu}(p^{\nu}+q^{\nu})=(\sqrt{s},0,0,0), \quad B^{\mu}=-\Lambda^{\mu}_{\nu}(p^{\nu}-q^{\nu})=(0,0,0,g).$$
  Here $\Lambda$ is the following Lorentz transform derived in \cite{Strain-Thesis-2005}:
  \begin{equation*}
\Lambda=(\Lambda^{\mu}_{\nu})=\left(
\begin{array}{cccc}
\frac{p^0+q^0}{\sqrt{s}} & -\frac{p_1+q_1}{\sqrt{s}} & -\frac{p_2+q_2}{\sqrt{s}} &-\frac{p_3+q_3}{\sqrt{s}}\\
\Lambda^{1}_0 & \Lambda^{1}_{1} & \Lambda^{1}_{2} & \Lambda^{1}_3 \\
0 & \frac{(p\times q)_1}{|p\times q|} & \frac{(p\times q)_2}{|p\times q|} & \frac{(p\times q)_3}{|p\times q|}\\
\frac{p^0-q^0}{g} & -\frac{p_1-q_1}{g} & -\frac{p_2-q_2}{g} &-\frac{p_3-q_3}{g}
\end{array}
\right),
\end{equation*}
 where  $\Lambda^{1}_0=\frac{2|p\times q|}{g\sqrt{s}}$ and
 $$\Lambda^{1}_i=\frac{2[p_i(p^0+q^0p^{\mu}q_{\mu})+q_i(q^0+p^0p^{\mu}q_{\mu})]}{g\sqrt{s}|p\times q|}, \quad i=1,2,3.$$
 Define $\bar{u}^\mu=\Lambda^{\mu}_{\nu}u^{\nu}$:
 \begin{align*}
 \bar{u}^0&=\Lambda^{0}_{\nu}u^{\nu}=\frac{(p^0+q^0)u^0}{\sqrt{s}} -\frac{(p+q)\cdot u}{\sqrt{s}},\\
 \bar{u}^1&=\Lambda^{1}_{\nu}u^{\nu}=\frac{2|p\times q|u^0}{g\sqrt{s}} +\frac{2[p(p^0+q^0p^{\mu}q_{\mu})+q(q^0+p^0p^{\mu}q_{\mu})]\cdot u}{g\sqrt{s}|p\times q|},\\
 \bar{u}^2&=\Lambda^{2}_{\nu}u^{\nu}=\frac{(p\times q)\cdot u}{|p\times q|},\\
 \bar{u}^3&=\Lambda^{3}_{\nu}u^{\nu}=\frac{(p^0-q^0)u^0}{g} -\frac{(p-q)\cdot u}{g}.
 \end{align*}
 Then we have
 $$\int_{\mathbb R^3}\frac{d\bar{p}}{\bar{p}^0}\delta((q^{\mu}-p^{\mu})\bar{p}_{\mu})\bar{s}
  \exp\Big\{\frac{u^{\mu}\bar{p}_{\mu} }{2T_0}\Big\}=\int_{\mathbb R^3}\frac{d\bar{p}}{\bar{p}^0}\delta(B^{\mu}\bar{p}_{\mu})\bar{s}_{\Lambda}
  \exp\Big\{\frac{u^{\mu}\bar{p}_{\mu} }{2T_0}\Big\},$$
 where $B^{\mu}\bar{p}_{\mu}=\bar{p}_3g,$
 \begin{align*}
  \bar{s}_{\Lambda}=&\bar{g}^2+4=g^2+\frac{1}{2}A^{\mu}(A_{\mu}-\bar{p}_{\mu})=g^2+\frac{1}{2}\sqrt{s}(\bar{p}^0-\sqrt{s}),\\
 \bar{u}^{\mu}\bar{p}_{\mu}=& \bar{p}^0\Big(-\frac{(p^0+q^0)u^0}{\sqrt{s}} +\frac{(p+q)\cdot u}{\sqrt{s}}\Big)\\
 &+\bar{p}_1\Big(\frac{2|p\times q|u^0}{g\sqrt{s}} +\frac{2[p(p^0+q^0p^{\mu}q_{\mu})+q(q^0+p^0p^{\mu}q_{\mu})]\cdot u}{g\sqrt{s}|p\times q|}\Big)\\
 &+\bar{p}_2\frac{(p\times q)\cdot u}{|p\times q|}+\bar{p}_3\Big(\frac{(p^0-q^0)u^0}{g} -\frac{(p-q)\cdot u}{g}\Big).
 \end{align*}
 Switch to polar coordinates
 $$d\bar{p}=|\bar{p}|^2d|\bar{p}|\sin\phi d\phi d\theta,\quad \bar{p}=|\bar{p}|(\sin\phi \cos\theta, \sin\phi\sin\theta, \cos\phi),$$
and use $\cos\phi=0$ for $\phi=\frac{\pi}{2}$ to rewrite $ k_2(p,q)$ as
 \begin{align*}
&\frac{C}{gp^0q^0}\int_{0}^{2\pi}d\phi\int_0^{\infty}\frac{|\bar{p}|d\bar{p}}{\bar{p}^0}\bar{s}_{\Lambda}
  \exp\Big\{\frac{1}{2T_0}\Big[\bar{p}^0\Big(-\frac{(p^0+q^0)u^0}{\sqrt{s}} +\frac{(p+q)\cdot u}{\sqrt{s}}\Big)\\
&+\Big(\frac{2|p\times q|u^0}{g\sqrt{s}} +\frac{2[p(p^0+q^0p^{\mu}q_{\mu})+q(q^0+p^0p^{\mu}q_{\mu})]\cdot u}{g\sqrt{s}|p\times q|}\Big)|\bar{p}|\cos\phi\\
&+\frac{(p\times q)\cdot u}{|p\times q|}|\bar{p}|\sin\phi\Big]  \Big\}\\
&=\frac{C}{gp^0q^0}\int_{0}^{\infty}\frac{|\bar{p}|d\bar{p}}{\bar{p}^0}\bar{s}_{\Lambda}
  \exp\Big\{\frac{\bar{p}^0}{2T_0}\Big(-\frac{(p^0+q^0)u^0}{\sqrt{s}} +\frac{(p+q)\cdot u}{\sqrt{s}}\Big)\Big\}\\
&\times I_0\Big(\frac{\sqrt{{g^2s|(p\times q)\cdot u|^2+4[|p\times q|^2u^0+(p(p^0+q^0p^{\mu}q_{\mu})+q(q^0+p^0p^{\mu}q_{\mu}))\cdot u]^2}}}{2T_0g\sqrt{s}|p\times q|}|\bar{p}|\Big),
 \end{align*}
 where $I_0$ is the first kind modified Bessel function of index zero:
 $$I_0(y)=\frac{1}{2\pi}\int_0^{2\pi}e^{y\cos\phi}d\phi.$$
 By further changing variables of integration and applying some known integrals as in the Appendix \cite{Strain-CMP-2010}, we obtain \eqref{k2} with the following exact form of $U_2(p,q)$:
 \begin{align}
 4T_0^2sU_2^2=&|(p^0+q^0)u^0-(p+q)\cdot u|^2-\frac{s|(p\times q)\cdot u|^2}{|p\times q|^2}\nonumber\\
 &-\frac{4[|p\times q|^2u^0+(p(p^0+q^0p^{\mu}q_{\mu})+q(q^0+p^0p^{\mu}q_{\mu}))\cdot u]^2}{g^2|p\times q|^2}\nonumber\\
 =&\frac{g^2(p^0+q^0)^2-4|p\times q|^2}{g^2}(u^0)^2+|(p+q)\cdot u|^2-\frac{s|(p\times q)\cdot u|^2}{|p\times q|^2}\label{u2}\\
 &-\frac{2u^0[g^2(p^0+q^0)(p+q)+4(p(p^0+q^0p^{\mu}q_{\mu})+q(q^0+p^0p^{\mu}q_{\mu}))]\cdot u}{g^2}\nonumber\\
 &-\frac{4|(p(p^0+q^0p^{\mu}q_{\mu})+q(q^0+p^0p^{\mu}q_{\mu}))\cdot u|^2}{g^2|p\times q|^2}.\nonumber
 \end{align}
Now we estimate the terms in the right hand side of the second equality in \eqref{u2}. For the first term, we have
\begin{align*}
\begin{aligned}
&g^2(p^0+q^0)^2-4|p\times q|^2\\
&\hspace{0.5cm}=2[(2p^0q^0+2+|p|^2+|q|^2)(p^0q^0-p\cdot q-1)-4|p|^2|q|^2+4(p\cdot q)^2\\
&\hspace{0.5cm}=4[(|p|^2+|q|^2)-(p^0q^0+1)p\cdot q]\\
&\hspace{0.5cm}+2(|p|^2+|q|^2)(p^0q^0-p\cdot q-1)+4(p\cdot q)^2\\
&\hspace{0.5cm}=2(|p|^2+|q|^2)(p^0q^0-p\cdot q+1)-4p\cdot q(p^0q^0-p\cdot q+1)\\
&\hspace{0.5cm}=s|p-q|^2.
\end{aligned}
\end{align*}
It is straightforward to see that the upper bound of the second term is $s|u|^2$.
Noting
\begin{align*}
&g^2(p^0+q^0)(p+q)+4(p(p^0+q^0p^{\mu}q_{\mu})+q(q^0+p^0p^{\mu}q_{\mu}))\\
&\hspace{0.5cm}=2p[(p^0+q^0)(p^0q^0-p\cdot q-1)-2p^0|q|^2+2q^0p\cdot q)]\\
&\hspace{0.5cm}+2q[(p^0+q^0)(p^0q^0-p\cdot q-1)-2q^0|p|^2+2p^0p\cdot q)]\\
&\hspace{0.5cm}=2p(p^0-q^0)(p^0q^0-p\cdot q+1)-2q(p^0-q^0)(p^0q^0-p\cdot q+1)\\
&\hspace{0.5cm}=s(p^0-q^0)(p-q),
\end{align*}
 the fourth term can be bounded by
 $$\frac{2u^0|u|s|p-q|^2}{g^2}.$$
For the corresponding  fifth term, we rewrite its numerator as
\begin{align*}
&4|p(p^0+q^0p^{\mu}q_{\mu})+q(q^0+p^0p^{\mu}q_{\mu})|^2\\
&\hspace{0.5cm}=4|p|^2[(p^0)^2|q|^4-2p^0q^0|q|^2p\cdot q+(q^0)^2(p\cdot q)^2]\\
&\hspace{0.5cm}+4|q|^2[(q^0)^2|p|^4-2p^0q^0|p|^2p\cdot q+(p^0)^2(p\cdot q)^2]\\
&\hspace{0.5cm}+2p\cdot q[p^0q^0|p|^2|q|^2-((p^0)^2|q|^2+(q^0)^2|p|^2)p\cdot q+p^0q^0(p\cdot q)^2]\\
&\hspace{0.5cm}=4|p|^2|q|^2\left\{[(p^0)^2|q|^2+(q^0)^2|p|^2+2p^0q^0p\cdot q]-4p^0q^0p\cdot q\right\}\\
&\hspace{0.5cm}+[|p|^2(q^0)^2+|q|^2(p^0)^2-2(|q|^2(p^0)^2+|p|^2(q^0)^2)](p\cdot q)^2+2p^0q^0(p\cdot q)^3\\
&\hspace{0.5cm}=4|p|^2|q|^2|p^0q-q^0p|^2-(p\cdot q)^2[(p^0)^2|q|^2+(q^0)^2|p|^2-2p^0q^0(p\cdot q)]\\
&\hspace{0.5cm}=4|p\times q|^2|p^0q-q^0p|^2.
\end{align*}
Then it can be bounded by
 $$\frac{4|u|^2|p\times q|^2|p^0q-q^0p|^2}{g^2|p\times q|^2}=\frac{4|u|^2|p^0q-q^0p|^2}{g^2}.$$
On the other hand, we have
\begin{align*}
&s|p- q|^2-|p^0q-q^0p|^2\\
&\hspace{0.5cm}=2(p^0q^0-p\cdot q+1)(|p|^2+|q|^2-2p\cdot q)-|q|^2(p^0)^2-|p|^2(q^0)^2+2p^0q^0p\cdot q\\
&\hspace{0.5cm}=4(p\cdot q)^2-2[(p^0)^2+(q^0)^2+p^0q^0]p\cdot q\\
&\hspace{0.5cm}+2(|p|^2+|q|^2)(p^0q^0+1)-[(p^0)^2|q|^2+(q^0)^2|p|^2]\\
&\hspace{0.5cm}\geq4|p|^2|q|^2-(2|p|^2+2|q|^2+2p^0q^0+4)|p||q|\\
&\hspace{0.5cm}+2(|p|^2+|q|^2)p^0q^0+|p|^2+|q|^2-2|p|^2|q|^2\\
&\hspace{0.5cm}=2(|p|^2+|q|^2-|p||q|)(p^0q^0-|p||q|)+(|p|^2+|q|^2-4|p||q|)\\
&\hspace{0.5cm}\geq3(|p|-|q|)^2
\end{align*}
since $p^0q^0-|p||q|\geq 1.$
We combine the above estimates in \eqref{u2} to get
\begin{align}
 4T_0^2U_2^2(p,q)=[1+O(1)|u|]^2\frac{s|p-q|^2}{g^2}.\nonumber
 \end{align}

%
%
\subsection{Appendix 2: Estimates of kernels}\label{App2}

We first prove some important estimates for kernels $k_1$ and $k_2$. As a corollary, estimates for corresponding kernels of $\bar{K}(h^{\varepsilon})=\frac{w(|p|)\sqrt{{\bf M}}}{\sqrt{J_{M}}}K(\frac{\sqrt{J_{M}}}{w\sqrt{{\bf M}}}h^{\varepsilon})$ will also be established.

\begin{lemma}\label{k12} For $k_2$ given in \eqref{k2} and $i=1,2,3$, it holds that
\begin{equation}\label{kb}
k_2(p,q)\lesssim \frac{1}{p^0|p-q|}\exp\Big\{-\frac{\sqrt{s}|p-q|}{8T_0g}\Big\},
\end{equation}
and
\begin{align}
&\int_{\mathbb{R}^3}dq k_2(p,q)\lesssim \frac{1}{p^0},\nonumber\\
&\int_{\mathbb{R}^3}dq \partial_{x_i} k_2(p,q)\lesssim \frac{1}{p^0},\label{0xp}\\
&\int_{\mathbb{R}^3}dq \partial_{p_i} k_2(p,q)\lesssim \frac{1}{p^0}.\nonumber
\end{align}
It is straightforward to verify that $k_1(p,q)$ also satisfies the same estimates.
\end{lemma}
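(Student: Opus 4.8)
The plan is to argue directly from the explicit formula \eqref{k2}, following the computation carried out for a global Maxwellian in the Appendix of \cite{Strain-CMP-2010} (see also \cite{Guo-Strain-CMP-2012}), and to absorb the dependence on the local background through the smallness of $|u|$ from \eqref{decay} together with the two-sided bound $T_M<T_0<2T_M$ from \eqref{temp}. Throughout I use the elementary relations $s=g^2+4m^2c^2$, $g^2=|p-q|^2-(p^0-q^0)^2$ (hence $g\le|p-q|$ and $g\le\sqrt s$), $\sqrt s\le|p-q|+2mc$, $p^0\le mc+|p-q|+q^0$, $q^0\ge mc$, $\tfrac{p^0+q^0}{q^0}\le 3+\tfrac{|p-q|}{mc}$, together with the identities $g^2(p^0+q^0)^2=s|p-q|^2+4|p\times q|^2$ and $|p^0q-q^0p|^2\le s|p-q|^2$ already established in Appendix 1; the first of these yields the lower bound $g\ge\tfrac{2mc|p-q|}{p^0+q^0}$. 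Since $\bar{\varepsilon}_0$ is small, the factor $1+O(|u|)$ in $U_2$ lies in $[\tfrac34,\tfrac54]$, so $U_2\approx U_2^{*}$ with $U_2^{*}:=\tfrac{\sqrt s|p-q|}{2k_BT_0g}$, and from $\sqrt s\ge g$ and $T_0<2T_M$ one obtains the crucial lower bound $U_2^{*}\ge\tfrac{|p-q|}{4k_BT_M}$.

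For the pointwise bound \eqref{kb} I split $e^{-U_2}\le e^{-\frac34U_2^{*}}=e^{-\frac{\sqrt s|p-q|}{8k_BT_0g}}\,e^{-\frac12U_2^{*}}$, retain the first factor (which is exactly the one in \eqref{kb}), and show
\[
\frac{s^{3/2}}{gp^0q^0}\,U_1\,e^{-\frac12U_2^{*}}\lesssim\frac1{p^0|p-q|}.
\]
Inserting $U_1\lesssim\tfrac1{U_2^{*}}+\tfrac{p^0+q^0}{(U_2^{*})^{2}}+\tfrac{p^0+q^0}{(U_2^{*})^{3}}$ and substituting $U_2^{*}=\tfrac{\sqrt s|p-q|}{2k_BT_0g}$, the three terms of $\tfrac{s^{3/2}}{gp^0q^0}U_1$ collapse to explicit monomials: a constant times $\tfrac{s}{p^0q^0|p-q|}$, a constant times $\tfrac{\sqrt s\,g(p^0+q^0)}{p^0q^0|p-q|^{2}}$, and a constant times $\tfrac{g^{2}(p^0+q^0)}{p^0q^0|p-q|^{3}}$, the constants involving only powers of $T_0<2T_M$. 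Using $g\le|p-q|$, $\sqrt s\le|p-q|+2mc$, $q^0\ge mc$ and $\tfrac{p^0+q^0}{q^0}\le3+\tfrac{|p-q|}{mc}$, each of these is bounded by $\tfrac1{p^0|p-q|}$ times a polynomial in $|p-q|$ of degree at most two, and since $U_2^{*}\ge\tfrac{|p-q|}{4k_BT_M}$ this polynomial is absorbed by $e^{-\frac12U_2^{*}}\le e^{-c|p-q|}$. No case distinctions are needed.

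For the integral bounds \eqref{0xp}, the estimate of $\int k_2\,dq$ follows from \eqref{kb} together with $e^{-\frac{\sqrt s|p-q|}{8k_BT_0g}}\le e^{-c|p-q|}$ (again via $\sqrt s\ge g$), because $\int_{\mathbb R^3}|p-q|^{-1}e^{-c|p-q|}\,dq$ is finite uniformly in $p$. For $\partial_{x_i}k_2$, the variable $x$ enters \eqref{k2} only through $T_0(t,x)$ and $u(t,x)$; using $T_0\ge T_M$ and $|\partial_xT_0|+|\partial_xu|\lesssim\bar{\varepsilon}_0$ from \eqref{decay} one checks $|\partial_{x_i}U_2|\lesssim U_2$ and $|\partial_{x_i}U_1|\lesssim U_1$, whence $|\partial_{x_i}k_2|\lesssim\tfrac{s^{3/2}}{gp^0q^0}U_1(1+U_2)e^{-U_2}\lesssim\tfrac{s^{3/2}}{gp^0q^0}U_1\,e^{-U_2/2}$ (from $xe^{-x/2}\lesssim1$), a bound of the same type as for $k_2$ with a slightly weaker exponent, so $\int|\partial_{x_i}k_2|\,dq\lesssim\tfrac1{p^0}$ as above. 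For $\partial_{p_i}k_2$ one differentiates \eqref{k2} using $|\partial_{p_i}p^0|\le1$, $|\partial_{p_i}|p-q||\le1$, $|\partial_{p_i}s|\le\tfrac{2\sqrt s|p-q|}{p^0}$ and $|\partial_{p_i}g|\le\tfrac{\sqrt s|p-q|}{p^0g}$ (the last two from $|p^0q-q^0p|\le\sqrt s|p-q|$); assembling these gives $|\partial_{p_i}U_2|\lesssim\tfrac{|p-q|^2}{p^0g}+\tfrac{\sqrt s}{g}+\tfrac{s|p-q|^2}{p^0g^3}$, with a milder bound for $\partial_{p_i}\big(\tfrac{s^{3/2}}{gp^0q^0}U_1\big)$. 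Converting the powers of $g^{-1}$ into powers of $|p-q|^{-1}$ times $(p^0+q^0)$-powers via $g\ge\tfrac{2mc|p-q|}{p^0+q^0}$, and then using $e^{-cU_2}$ to control that growth precisely on the region where $g$ is small, one obtains a bound for $|\partial_{p_i}k_2|$ with at worst a $|p-q|^{-2}$ singularity — locally integrable in $\mathbb R^3$ — carrying the same $p^0$-weight, hence $\int|\partial_{p_i}k_2|\,dq\lesssim\tfrac1{p^0}$. The statements for $k_1$ are immediate: from \eqref{Koper}, $k_1(p,q)=\sqrt{\mathbf M(p)\mathbf M(q)}\int_{\mathbb S^2}\tfrac{s}{p^0q^0}\mathbb B(p,q,\omega)\,d\omega$, the angular integral being polynomial in $p^0,q^0$ (the denominator of $\mathbb B$ is bounded below by $16m^4c^4$ since $(p^0+q^0)^2-|p+q|^2\ge4m^2c^2$), and the Gaussian factor $\sqrt{\mathbf M(p)\mathbf M(q)}\lesssim e^{-c(p^0+q^0)}$ (using $u^0\ge c$, $|u|$ small, $T_0<2T_M$) dominates everything.

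The main obstacle is the estimate of $\partial_{p_i}k_2$. Differentiating in $p$ simultaneously aggravates the singularity of $k_2$ along $\{g=0\}$ (which contains the diagonal $p=q$), producing negative powers of $g$, and introduces polynomial growth in $p^0,q^0$; one must verify that these two defects never reinforce each other. This is exactly what the combination of $g\gtrsim|p-q|/(p^0+q^0)$ with the exponential $e^{-c\sqrt s|p-q|/g}$ — largest precisely where $g$ is smallest — achieves, and the delicate bookkeeping point is to keep the leftover $|p-q|$-singularity at the integrable threshold $|p-q|^{-2}$; this mirrors the corresponding step in the global-Maxwellian computation of \cite{Strain-CMP-2010}.
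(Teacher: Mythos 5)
Your proposal is correct and follows essentially the same route as the paper's Appendix~2 proof: control of $U_2$ and $U_1$ using the smallness of $|u|$, the Glassey--Strauss bounds $|p-q|/\sqrt{p^0q^0}\le g\le|p-q|$ (you use the equivalent $g\gtrsim|p-q|/(p^0+q^0)$), absorption of polynomial and $g^{-k}$ prefactors by a split-off piece of the exponential, and for $\partial_{p_i}k_2$ the same mechanism of trading negative powers of $g$ against the exponential $e^{-c\sqrt{s}|p-q|/g}$ to land at an integrable $|p-q|^{-2}$ singularity. The only (cosmetic) difference is that the paper first combines the $\partial_{p_i}\sqrt s$ and $\partial_{p_i}(1/g)$ contributions to $\partial_{p_i}U_2$, which produces a cleaner $m^2c^2/(g^3\sqrt s)$ factor, and is terser about the three terms of $U_1$ (lumping them into the middle one), whereas you keep all three terms explicit -- which is, if anything, slightly more careful.
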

\begin{proof} We first prove \eqref{kb}.
Note that, by Lemma 3.1 in \cite{Glassey-Strauss-PRIMS-1993}, $s=g^2+4\leq 4p^0q^0+4$ and
$$\frac{|p-q|}{\sqrt{p^0q^0}}\leq g\leq |p-q|.$$
We use the smallness assumption of $|u|$  in \eqref{decay} to  have
\begin{align*}
U_2(p,q)\geq&\frac{\sqrt{s}|p-q|}{4T_0g},\qquad
U_1(p,q)\lesssim \frac{p^0+q^0}{2[U_2(p,q)]^2}.
\end{align*}
Then we can further estimate the kernel $k_2$ as
\begin{align}
 k_2(p,q)\lesssim& \frac{s^{\frac{3}{2}}}{gp^0q^0}(p^0+q^0)\times \frac{g^2}{s|p-q|^2}\exp\{-\frac{\sqrt{s}|p-q|}{4T_0g}\}\nonumber\\
=& \frac{(p^0+q^0)g^2}{p^0q^0|p-q|^3}\frac{\sqrt{s}|p-q|}{g}\exp\{-\frac{\sqrt{s}|p-q|}{4T_0g}\}\nonumber\\
\lesssim& \frac{1+|p-q|+q^0}{p^0q^0|p-q|}\exp\{-\frac{|p-q|}{6T_0}\}\nonumber\\
\lesssim& \frac{1}{p^0|p-q|}\exp\{-\frac{|p-q|}{8T_0}\}\nonumber.
\end{align}
Now we prove \eqref{0xp}. Noting that
$$|p|^2\leq 2|p-q|^2+2|q|^2,\qquad |q|^2\leq 2|p-q|^2+2|p|^2,$$
we get from \eqref{kb} that
\begin{align}
\int_{\mathbb{R}^3}dq k_2(p,q)\lesssim& \int_{\mathbb{R}^3}dq\frac{1}{p^0|p-q|}\exp\Big\{-\frac{|p-q|}{8T_0}\Big\}\lesssim\frac{1}{p^0}.\nonumber
\end{align}
 For the second inequality in \eqref{0xp}, again from \eqref{kb}, we obtain
\begin{align*}
&\int_{\mathbb{R}^3}dq \partial_{x_i} k_2(p,q)\nonumber\\
&\lesssim \int_{\mathbb{R}^3}dq  k_2(p,q) \frac{\sqrt{s}|p-q|}{g} \exp\{-\frac{\sqrt{s}|p-q|}{4T_0g}\}\\
&\lesssim \int_{\mathbb{R}^3}dq  k_2(p,q) \exp\{-\frac{\sqrt{s}|p-q|}{8T_0g}\}\nonumber\\
&\lesssim \frac{1}{p^0}.
\end{align*}
Now we prove the third inequality in \eqref{0xp}. Note that
\begin{align*}
\partial_{p_i}U_2(p,q)&=[1+O(1)|u|]\partial_{p_i}\Big(\frac{\sqrt{s}|p-q|}{2T_0g}\Big)\\
&=[1+O(1)|u|]\Big(\frac{|p-q|\partial_{p_i}s}{4T_0g\sqrt{s}}+\frac{\sqrt{s}(p_i-q_i)}{2T_0g|p-q|}
-\frac{\sqrt{s}|p-q|\partial_{p_i}g}{2T_0g^2}\Big)\\
&=[1+O(1)|u|]\Big[\frac{-4|p-q|}{2T_0g^2\sqrt{s}}\frac{q^0}{g}\Big(\frac{p_i}{p^0}-\frac{q_i}{q^0}\Big)
+\frac{\sqrt{s}(p_i-q_i)}{2T_0g|p-q|}\Big]\\
&\lesssim \frac{s^{\frac{3}{2}}|p-q|^3}{g^3} \frac{q^0}{s^2|p-q|\min\{p_0,q_0\}}+\frac{\sqrt{s}|p-q|}{2T_0g}\frac{1}{|p-q|}\\
&\lesssim \frac{s^{\frac{3}{2}}|p-q|^3}{g^3} \frac{\min\{p_0,q_0\}+|p-q|}{|p-q|\min\{p_0,q_0\}}+\frac{\sqrt{s}|p-q|}{2T_0g}\frac{1}{|p-q|},
\end{align*}
where we have used the following inequality:
$$\frac{p_i}{p^0}-\frac{q_i}{q^0}\leq\frac{|p-q|}{\min\{p_0,q_0\}}.$$
Then we use \eqref{kb} to  obtain
\begin{align*}
&\int_{\mathbb{R}^3}dq \partial_{p_i} k_2(p,q)\\
&\hspace{0.5cm}\lesssim\int_{\mathbb{R}^3}dq \Big(\frac{s^{\frac{3}{2}}|p-q|^3}{g^3} \frac{\min\{p_0,q_0\}+|p-q|}{|p-q|\min\{p_0,q_0\}}+\frac{\sqrt{s}|p-q|}{2T_0g}\frac{1}{|p-q|}\Big)k_2(p,q)\\
&\hspace{0.5cm}\lesssim\int_{\mathbb{R}^3}dq \frac{1}{p^0}\Big(\frac{s^{\frac{3}{2}}|p-q|^3}{g^3}+1\Big) \Big(1+\frac{1}{|p-q|^2}\Big)\exp\Big\{-\frac{\sqrt{s}|p-q|}{8T_0g}\Big\}\\
&\hspace{0.5cm}\lesssim\int_{\mathbb{R}^3}dq  \frac{1}{p^0}\Big(1+\frac{1}{|p-q|^2}\Big)\exp\Big\{-\frac{\sqrt{s}|p-q|}{16T_0g}\Big\}\\
&\hspace{0.5cm}\lesssim \frac{1}{p^0}.
\end{align*}
\end{proof}
According to the definition of $h^{\varepsilon}$ in \eqref{Linfty}, the operator $K$ corresponding to the equation satisfied by $h^{\varepsilon}$ is $\bar{K}(h^{\varepsilon})=\frac{w(|p|)\sqrt{{\bf M}}}{\sqrt{J_{M}}}K_{\bf M}(\frac{\sqrt{J_{M}}}{w\sqrt{{\bf M}}}h^{\varepsilon})$. Correspondingly, we can also define $\bar{K}_i, (i=1,2)$ and kernels $\bar{k}_i$ as follows:
$$\bar{K}_i(f)=\int_{\mathbb{R}^3}dq \bar{k}_i(p,q)f(q)=\int_{\mathbb{R}^3}dq \frac{w(|p|)\sqrt{J_{M}(q)}\sqrt{{\bf M}(p)}}{w(|q|)\sqrt{J_{M}(p)}\sqrt{{\bf M}(q)}}k_i(p,q)f(q),\qquad i=1,2.$$
Namely, $\bar{k}_i(p,q)= \frac{w(|p|)\sqrt{J_{M}(q)}\sqrt{{\bf M}(p)}}{w(|q|)\sqrt{J_{M}(p)}\sqrt{{\bf M}(q)}}k_i(p,q), i=1, 2$. Then $\bar{k}_i(p,q)$ also satisfy the same estimates in Lemma \ref{k12}.
\begin{corollary}\label{bar} For  $i=1,2,3$, it holds that
\begin{equation}\label{kbarb}
\bar{k}_2(p,q)\lesssim \frac{1}{p^0|p-q|}\exp\Big\{-\frac{c_0\sqrt{s}|p-q|}{T_0g}\Big\},
\end{equation}
for some small constant $c_0$, and
\begin{align}
&\int_{\mathbb{R}^3}dq \bar{k}_2(p,q)\lesssim \frac{1}{p^0},\nonumber\\
&\int_{\mathbb{R}^3}dq \partial_{x_i} \bar{k}_2(p,q)\lesssim \frac{1}{p^0},\label{kbar}\\
&\int_{\mathbb{R}^3}dq \partial_{p_i} \bar{k}_2(p,q)\lesssim \frac{1}{p^0}.\nonumber
\end{align}
$\bar{k}_1(p,q)$ also satisfies the same estimates.
\end{corollary}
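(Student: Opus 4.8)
The plan is to factor out the conjugation weight and reduce the whole statement to Lemma~\ref{k12}. Writing $\bar k_2(p,q)=\Theta(p,q)\,k_2(p,q)$ with $\Theta(p,q)=\frac{w(|p|)}{w(|q|)}\cdot\frac{\sqrt{J_{M}(q)}\,\sqrt{\mathbf{M}(p)}}{\sqrt{J_{M}(p)}\,\sqrt{\mathbf{M}(q)}}$, the first task is to bound $\Theta$. The algebraic part is elementary: $1+|p|\le(1+|q|)(1+|p-q|)$ gives $w(|p|)/w(|q|)=\big((1+|p|)/(1+|q|)\big)^{\beta}\le(1+|p-q|)^{\beta}$. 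The exponential part, computed from the explicit Maxwell--J\"uttner forms of the global Maxwellian $J_{M}$ and the local Maxwellian $\mathbf{M}$, equals $\exp\{(p^{0}-q^{0})\Theta_1+(p-q)\cdot\Theta_2\}$ for a scalar $\Theta_1$ and a vector $\Theta_2$ built out of $T_M$, $T_0$ and $u$; by the two-sided bound $T_M<\sup T_0<2T_M$ of \eqref{temp} (forcing $T_M$ and $T_0$ to differ by a controlled amount), together with the smallness of $\|u\|_{\infty}$ and the near-constancy of $T_0$ from \eqref{decay}, both $|\Theta_1|$ and $|\Theta_2|$ are small; in particular $|\Theta_1|$ stays strictly below the decay rate of $k_2$ in \eqref{kb}. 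Using $|p^{0}-q^{0}|\le|p-q|$ this yields $\Theta(p,q)\lesssim(1+|p-q|)^{\beta}\exp\{c_1|p-q|\}$ with $c_1$ small.

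The key step is the pointwise bound \eqref{kbarb}. Here I would invoke the relativistic identity $|p-q|^{2}=g^{2}+(p^{0}-q^{0})^{2}$ (equivalently $g^{2}=2(p^{0}q^{0}-p\cdot q-m^{2}c^{2})$), which gives $\sqrt{s}\,|p-q|/g\ge|p-q|\ge|p^{0}-q^{0}|$, so the exponential gain $\exp\{-\sqrt{s}\,|p-q|/(8k_BT_0g)\}$ of $k_2$ in \eqref{kb} absorbs both $\exp\{c_1|p-q|\}$ and the polynomial factor $(1+|p-q|)^{\beta}$ once its rate is lowered from $1/(8k_BT_0)$ to a suitable $c_0>0$; this $c_0$ is exactly the small constant in \eqref{kbarb}, and the same computation handles $\bar k_1$ since $k_1$ obeys the same estimate as $k_2$ by Lemma~\ref{k12}. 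The integral bounds \eqref{kbar} then follow by repeating verbatim the three integrations in the proof of Lemma~\ref{k12} with the rate $c_0$ in place of $1/(8k_BT_0)$: $\int dq\,\bar k_2\lesssim\frac1{p^{0}}\int\frac{dq}{|p-q|}e^{-c_0|p-q|}\lesssim\frac1{p^{0}}$, and likewise after differentiation. For $\partial_{x_i}\bar k_2$ one uses that $w$ and $J_{M}$ do not depend on $x$, so the derivative hits only $\mathbf{M}(p)/\mathbf{M}(q)$ and $k_2$ through $T_0(x)$ inside $U_1,U_2$; the coefficients $\partial_x(u/T_0)$, $\partial_x(1/T_0)$ etc.\ are bounded by \eqref{decay}, so one only picks up an extra factor $\lesssim(1+|p-q|)\big(1+\sqrt{s}\,|p-q|/g\big)$ that is reabsorbed by slightly shrinking $c_0$. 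For $\partial_{p_i}\bar k_2$ one uses in addition that $\partial_{p_i}\log w(|p|)$, $\partial_{p_i}\log J_{M}(p)$ and $\partial_{p_i}\log\mathbf{M}(p)$ are all bounded --- here, as in Section~\ref{w1h}, the boundedness of $\hat{p}$ is essential, since it makes $\partial_{p_i}(1+|p|)^{\beta}$ merely polynomial --- together with the pointwise bound on $\partial_{p_i}U_2$, hence on $\partial_{p_i}k_2$, already established for Lemma~\ref{k12}; the resulting extra factor $\lesssim\big(1+\sqrt{s}\,|p-q|/g\big)\big(1+|p-q|^{-2}\big)$ is still integrable against $\bar k_2$ and gives $\lesssim 1/p^{0}$.

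The main obstacle is the positivity of the residual rate $c_0$ in \eqref{kbarb}: conjugating by $\sqrt{J_{M}/\mathbf{M}}$ is an exponential \emph{loss} of size $\exp\{c_1|p-q|\}$, and one must check that the ``cold'' choice of $J_{M}$ (with $T_M$ below $T_0$ but within the margin dictated by \eqref{temp}) together with $\|u\|_{\infty}$ small keeps $c_1$ strictly below the decay rate of $k_2$ --- the identity $|p-q|^{2}=g^{2}+(p^{0}-q^{0})^{2}$ being precisely what turns this gain/loss bookkeeping into an elementary inequality. Everything else is a routine adaptation of the proof of Lemma~\ref{k12}.
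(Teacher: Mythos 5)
Your proposal is correct and follows essentially the same route as the paper: factor $\bar k_2=\Theta\,k_2$, bound the polynomial weight by $(1+|p-q|)^{\beta}$, write the exponential part of $\Theta$ in terms of $p^0-q^0$ and $p-q$, and use $|p^0-q^0|\le|p-q|\le\sqrt{s}|p-q|/g$ together with $T_0<2T_M$ from \eqref{temp} and $\|u\|_\infty$ small so that the net exponential rate stays strictly negative, after which the integral bounds are read off as in Lemma~\ref{k12}. One small caution on phrasing: the scalar $\Theta_1=(T_0-T_M)/(2k_BT_MT_0)$ is not ``small'' in general under \eqref{temp} --- it is only guaranteed to lie strictly below the decay rate of $k_2$, which is the statement you correctly use and which is exactly what the paper exploits via the factor $(T_0-2T_M)$.
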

\begin{proof} In fact, we only need to note that
\begin{align*}
\bar{k}_2(p,q)\leq&\frac{(1+|p|)^{\beta}}{(1+|q|)^{\beta}}\frac{s^{\frac{3}{2}}}{gp^0q^0}U_1(p,q)\\
&\times
\exp\Big\{-[1+O(1)|u|]\frac{\sqrt{s}|p-q|}{2T_0g}+\frac{(T_0-T_M)(p^0-q^0)}{2T_MT_0}\Big\}\\
\lesssim& \frac{s^{\frac{3}{2}}}{gp^0q^0}U_1(p,q) (1+|p-q|)^{\beta}\exp\Big\{[(T_0-2T_M)+O(1)|u|]\frac{\sqrt{s}|p-q|}{2T_0T_Mg}\}\\
\lesssim& \frac{s^{\frac{3}{2}}}{gp^0q^0}U_1(p,q) \exp\Big\{-c_0\frac{\sqrt{s}|p-q|}{T_0T_Mg}\}
\end{align*}
by \eqref{temp}. Then, similar to the proof of Lemma \ref{k12}, we can obtain \eqref{kbarb} and \eqref{kbar}.
\end{proof}

%
%
\subsection{Appendix 3: Construction and estimates of coefficients}\label{App3}

We will present the existence of coefficients $F_n, (1\leq n\leq 2k-1)$, and their momentum and time regularities estimates. For $i\in[1, 2k-1]$, we decompose  $\frac{F_n}{\sqrt{\mathbf{M}}}$ as the sum of macroscopic and microscopic parts:
\begin{equation*}\label{decom}
\begin{aligned}
\frac{F_n}{\sqrt{\mathbf{M}}}=&{\bf P}\Big(\frac{F_n}{\sqrt{\mathbf{M}}}\Big)+{\{\bf I-P\}}\Big(\frac{F_n}{\sqrt{\mathbf{M}}}\Big)\\
=&[a_n(t,x)+b_n(t,x)\cdot p+c_n(t,x) p^0]\sqrt{\mathbf{M}}+{\{\bf I-P\}}\Big(\frac{F_n}{\sqrt{\mathbf{M}}}\Big).
\end{aligned}
\end{equation*}

To obtain the linear system satisfied by the abstract functions $a_{n}(t,x), b_{n}(t,x), c_{n}(t,x)$ and $E_{n}(t,x)$, $B_{n}(t,x)$, we  first derive the explicit expression of the third momentum $T^{\alpha \beta \gamma}[\mathbf{M}]$, $(\alpha,\beta,\gamma\in\{0, 1, 2, 3\})$ :
\begin{equation*}\label{third}
T^{\alpha \beta\gamma}[\mathbf{M}] =
 \int_{\mathbb R^3}     \frac{ p^\alpha
p^\beta p^{\gamma} }{p^0} \mathbf{M}\,   d p.
\end{equation*}
We will first get the expression of $T^{\alpha \beta\gamma}[\mathbf{M}]$ in the rest frame where $(u^0, u^1, u^2, u^3)=(1, 0, 0, 0)$.
For convenience, we denote $T^{\alpha \beta\gamma}[\mathbf{M}]$ as $\bar{T}^{\alpha \beta\gamma}$  in the rest frame, $p$ as $\bar{p}$ in the rest frame, and $-v$ as the velocity of general reference frame relative to the rest frame. Then, the corresponding boost matrix $\bar{\Lambda}$ is
  \begin{equation*}
\bar{\Lambda}=(\bar{\Lambda}^{\mu}_{\nu})=\left(
\begin{array}{cccc}
\displaystyle \tilde{r} & \tilde{r}v_1 & \tilde{r}v_2 & \tilde{r}v_3\\
\displaystyle \tilde{r}v_1 & 1+(\tilde{r}-1)\frac{v_1^2}{|v|^2} & (\tilde{r}-1)\frac{v_1v_2}{|v|^2} & (\tilde{r}-1)\frac{v_1v_3}{|v|^2}\\
\displaystyle \tilde{r}v_2 & (\tilde{r}-1)\frac{v_1v_2}{|v|^2} & 1+(\tilde{r}-1)\frac{v_2^2}{|v|^2} & (\tilde{r}-1)\frac{v_2v_3}{|v|^2}\\
\displaystyle \tilde{r}v_3 & (\tilde{r}-1)\frac{v_1v_3}{|v|^2} & (\tilde{r}-1)\frac{v_2v_3}{|v|^2} &1+(\tilde{r}-1)\frac{v_3^2}{|v|^2}
\end{array}
\right),
\end{equation*}
where $\tilde{r}=u^0, v_i=\frac{u_i}{u^0}$. Noting
 $$p^{\mu}= \bar{\Lambda}^{\mu}_{\nu}\bar{p}^{\nu}, \qquad \frac{dp}{p^0}=\frac{d\bar{p}}{\bar{p}^0},$$
 we have
\begin{equation}\label{abg}
T^{\alpha \beta\gamma}[\mathbf{M}]=\bar{\Lambda}^{\alpha}_{\alpha'}\bar{\Lambda}^{\beta}_{\beta'}\bar{\Lambda}^{\gamma}_{\gamma'}\bar{T}^{\alpha' \beta'\gamma'}.
\end{equation}
Then, we can obtain  the expression of $T^{\alpha \beta\gamma}[\mathbf{M}]$  by the expression of $\bar{T}^{\alpha \beta\gamma}$  and \eqref{abg}. Now we give the expression of $\bar{T}^{\alpha \beta\gamma}$ as follows:

\begin{lemma}\label{rest}
Let $i, j, k\in\{1, 2, 3\}$. For the third momentum $\bar{T}^{\alpha \beta\gamma}$ which corresponds to $T^{\alpha \beta\gamma}[\mathbf{M}]$ in the rest frame, we have
\begin{align}
&\bar{T}^{000}=
\frac{n_0[3K_3(\gamma)+\gamma K_2(\gamma)]}{\gamma K_2(\gamma)},\label{rest000}\\
&\bar{T}^{0ii}=\bar{T}^{ii0}=\bar{T}^{i0i}=
\frac{n_0K_3(\gamma)}{\gamma K_2(\gamma)},\label{rest0ii}\\
&\bar{T}^{\alpha \beta\gamma}=0, \qquad\mbox{if} ~(\alpha,\beta,\gamma)\neq (0,0,0), (0,i,i), (i,i,0), (i,0,i). \label{rest0ij}
\end{align}
\end{lemma}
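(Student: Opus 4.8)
The plan is to use the fact that in the rest frame the local Maxwellian collapses to an isotropic Jüttner distribution, reduce every surviving third moment to a one–dimensional Bessel–type integral, and then close the computation with the standard recurrence for the modified Bessel functions $K_j$.

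First I would set $(u^0,u^1,u^2,u^3)=(c,0,0,0)$ in \eqref{maxwell}. Since then $u^{\mu}p_{\mu}=-cp^0$ and $c/(k_BT_0)=\gamma/(mc)$, the distribution in the rest frame becomes
\[
\mathbf{M}_{\mathrm{rest}}(\bar p)=\frac{n_0\gamma}{4\pi m^3c^3K_2(\gamma)}\,\exp\Big\{-\frac{\gamma\,\bar p^0}{mc}\Big\},\qquad \bar p^0=\sqrt{m^2c^2+|\bar p|^2},
\]
which depends on $\bar p$ only through $|\bar p|$. Hence $\bar T^{\alpha\beta\gamma}=c\int_{\mathbb R^3}\frac{\bar p^\alpha\bar p^\beta\bar p^\gamma}{\bar p^0}\mathbf{M}_{\mathrm{rest}}\,d\bar p$ is invariant under $\bar p\mapsto-\bar p$ and under rotations of $\bar p$, so it vanishes as soon as some spatial index $1,2,3$ occurs an odd number of times among $(\alpha,\beta,\gamma)$. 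This is exactly \eqref{rest0ij}, and it leaves only $\bar T^{000}=c\int(\bar p^0)^2\mathbf{M}_{\mathrm{rest}}\,d\bar p$ and, using rotational symmetry to replace $\bar p_i^2$ by $\tfrac13|\bar p|^2$, $\bar T^{0ii}=\tfrac{c}{3}\int|\bar p|^2\mathbf{M}_{\mathrm{rest}}\,d\bar p$.

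Next I would pass to spherical coordinates and perform the hyperbolic substitution $|\bar p|=mc\sinh\xi$, $\bar p^0=mc\cosh\xi$, so that the radial measure $|\bar p|^2\,d|\bar p|$ becomes $m^3c^3\sinh^2\xi\cosh\xi\,d\xi$ (the angular integration contributing a factor $4\pi$) and the exponent becomes $-\gamma\cosh\xi$. Both moments then reduce to integrals $\int_0^\infty\cosh^{n}\xi\,\sinh^2\xi\,e^{-\gamma\cosh\xi}\,d\xi$, with $n=3$ for $\bar T^{000}$ and $n=1$ for $\bar T^{0ii}$ (and $n=1$ again, with an extra $\sinh^2\xi$, after writing $|\bar p|^2=(mc\sinh\xi)^2$). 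Writing $\sinh^2\xi=\cosh^2\xi-1$ and expanding the resulting powers of $\cosh\xi$ via the product–to–sum identities $\cosh^3\xi=\tfrac14(\cosh3\xi+3\cosh\xi)$ and $\cosh^5\xi=\tfrac1{16}(\cosh5\xi+5\cosh3\xi+10\cosh\xi)$, and then invoking the classical representation $\int_0^\infty e^{-\gamma\cosh\xi}\cosh(k\xi)\,d\xi=K_k(\gamma)$ (which, with $\lambda=\gamma\cosh\xi$, is precisely the integral in the definition of $K_j$), one obtains $\int_0^\infty\cosh^3\xi\sinh^2\xi\,e^{-\gamma\cosh\xi}d\xi=\tfrac1{16}\big(K_5+K_3-2K_1\big)$ and $\int_0^\infty\cosh\xi\sinh^4\xi\,e^{-\gamma\cosh\xi}d\xi=\tfrac1{16}\big(K_5-3K_3+2K_1\big)$.

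Finally I would simplify these combinations with the recurrence $K_{j+1}(\gamma)-K_{j-1}(\gamma)=\tfrac{2j}{\gamma}K_j(\gamma)$, which yields $K_3=K_1+\tfrac4\gamma K_2$ and $K_5=K_1+\tfrac{12}{\gamma}K_2+\tfrac{48}{\gamma^2}K_1+\tfrac{192}{\gamma^3}K_2$, hence
\[
K_5+K_3-2K_1=\frac{16}{\gamma}K_2+\frac{48}{\gamma^2}K_1+\frac{192}{\gamma^3}K_2,\qquad
K_5-3K_3+2K_1=\frac{48}{\gamma^2}K_1+\frac{192}{\gamma^3}K_2 .
\]
Substituting back and recombining (again via $K_1+\tfrac4\gamma K_2=K_3$) gives $\bar T^{000}=\tfrac{n_0m^2c^3}{\gamma K_2(\gamma)}\big(3K_3(\gamma)+\gamma K_2(\gamma)\big)$ and $\bar T^{0ii}=\tfrac{n_0m^2c^3K_3(\gamma)}{\gamma K_2(\gamma)}$, i.e. \eqref{rest000} and \eqref{rest0ii}; as a consistency check, the same computation with $n=0$ (no extra $\cosh^2\xi$) reproduces the normalization $c\int\mathbf{M}_{\mathrm{rest}}\,d\bar p=n_0c$ encoded in \eqref{moments3}. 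The only genuine work is the bookkeeping of the last two steps — the $\cosh(k\xi)$ expansions and the cancellations forced by the Bessel recurrences — so the main obstacle is purely computational; the symmetry argument that eliminates all other components is immediate.
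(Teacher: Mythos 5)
Your proposal is correct and reaches the same formulas, but by a genuinely different computational route than the paper. The paper substitutes $y=\gamma\bar p^0/(mc)$ (so $|\bar p|=\tfrac{mc}{\gamma}\sqrt{y^2-\gamma^2}$) and then recognizes the resulting integrals $\int_\gamma^\infty y(y^2-\gamma^2)^{k/2}e^{-y}\,dy$ directly against its own stated definition $K_j(\gamma)=\frac{2^j j!}{(2j)!\gamma^j}\int_\gamma^\infty e^{-\lambda}(\lambda^2-\gamma^2)^{j-1/2}d\lambda$ — one silent integration by parts (using $\frac{d}{d\lambda}(\lambda^2-\gamma^2)^{j+1/2}=(2j+1)\lambda(\lambda^2-\gamma^2)^{j-1/2}$) brings those integrals to exactly that form, and the coefficients $3K_3+\gamma K_2$ and $K_3$ drop out with no further Bessel identities. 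You instead parametrize hyperbolically ($|\bar p|=mc\sinh\xi$), use the $\cosh(k\xi)$ representation of $K_k$, expand $\cosh^3\xi\sinh^2\xi$ and $\cosh\xi\sinh^4\xi$ via power-reduction identities, and then collapse $K_5,K_3,K_1$ with three applications of the recurrence $K_{j+1}-K_{j-1}=\tfrac{2j}{\gamma}K_j$. I checked your intermediate formulas; they are all correct and the final answers match. What the paper's route buys is that it matches its own definition with essentially no Bessel algebra; what yours buys is that it does not depend on the specific form the paper chose for $K_j$, at the price of the recurrence bookkeeping. One small inaccuracy: your parenthetical claim that, under $\lambda=\gamma\cosh\xi$, $\int_0^\infty e^{-\gamma\cosh\xi}\cosh(k\xi)\,d\xi$ is "precisely the integral in the definition of $K_j$" is not literally true — that substitution turns the paper's definition into $\frac{2^j j!}{(2j)!}\gamma^j\int_0^\infty e^{-\gamma\cosh\xi}\sinh^{2j}\xi\,d\xi$, which involves $\sinh^{2j}\xi$ rather than $\cosh(j\xi)$. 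The two representations are both standard and equal to $K_j$, but the equivalence is itself a small fact, not an immediate change of variable, so you should either cite the $\cosh(k\xi)$ representation as a known formula or work with the $\sinh^{2j}\xi$ form that the paper's definition actually produces.
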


\begin{proof}
It is straightforward to verify \eqref{rest0ij}  by the symmetry of the variable of integration.  Now we prove \eqref{rest000}. It holds that
\begin{align*}\label{000}
&\bar{T}^{000}= \int_{\mathbb R^3}    (p^0)^2 \frac{n_0 \gamma}{4 \pi  K_2(\gamma)} \exp\{- \gamma\bar{p}_{0}  \} \,   d p.
\end{align*}
As the proof of Proposition 3.3 in \cite{Speck-Strain-CMP-2011}, we let $y=\gamma\bar{p}_{0}$ to get $\bar{p}^0=\frac{y}{\gamma}$, $|\bar{p}|=\frac{\sqrt{y^2-\gamma^2}}{\gamma}$, and
$d|\bar{p}|=\frac{1 }{\gamma}\frac{y dy}{ \sqrt{y^2-\gamma^2}}$. Then we have
\begin{align*}
\bar{T}^{000}=& \int_{\gamma}^{\infty}   \frac{1}{\gamma^2}y^2 \frac{n_0
\gamma}{ K_2(\gamma)} e^{- y} \frac{1}{\gamma^2}(y^2-\gamma^2)\frac{1 }{\gamma}\frac{y dy}{ \sqrt{y^2-\gamma^2}}\\
=& \frac{n_0}{\gamma^4K_2(\gamma)}\int_{\gamma}^{\infty}[(y^2-\gamma^2)^{\frac{3}{2}}+\gamma^2\sqrt{y^2-\gamma^2}]ye^{- y} dy\\
=&\frac{n_0[3K_3(\gamma)+\gamma K_2(\gamma)]}{\gamma K_2(\gamma)}.
\end{align*}
Similarly, \eqref{rest0ii} can be proved as follows:
\begin{align*}
\bar{T}^{0ii}=&c \int_{\mathbb R^3}    \frac{|p|^2}{3} \frac{n_0
 \gamma}{4 \pi K_2(\gamma)} \exp\{- \gamma\bar{p}_{0}  \} \,   d p\\
=& \int_{\gamma}^{\infty}   \frac{1}{3\gamma^2}(y^2-\gamma^2) \frac{n_0 \gamma}{  K_2(\gamma)} e^{- y} \frac{1}{\gamma^2}(y^2-\gamma^2)\frac{1 }{\gamma}\frac{y dy}{ \sqrt{y^2-\gamma^2}}\\
=& \frac{n_0}{3\gamma^4K_2(\gamma)}\int_{\gamma}^{\infty}(y^2-\gamma^2)^{\frac{3}{2}}ye^{- y} dy\\
=& \frac{n_0K_3(\gamma)}{\gamma K_2(\gamma)}.
\end{align*}

\end{proof}

Now we can use Lemma \ref{rest} and \eqref{abg} to derive the explicit expression of $T^{\alpha \beta\gamma}[\mathbf{M}]$.

\begin{lemma}\label{general}
For $i, j, k\in\{1, 2, 3\}$, we have
\begin{equation}\label{inert}
\begin{split}
&T^{000}[\mathbf{M}]=
\frac{n_0}{\gamma K_2(\gamma)}[(3K_3(\gamma)+\gamma K_2(\gamma))(u^0)^3+3K_3(\gamma)u^0|u|^2],\\
&T^{00i}[\mathbf{M}]=
\frac{n_0}{\gamma K_2(\gamma)}\left[(5K_3(\gamma)+\gamma K_2(\gamma))(u^0)^2u_i+K_3(\gamma)|u|^2u_i\right],\\
&T^{0ij}[\mathbf{M}]=\frac{n_0}{\gamma K_2(\gamma)}\left[(6K_3(\gamma)+\gamma K_2(\gamma))u^0u_iu_j+\delta_{ij} K_3(\gamma)u^0\right],\\
&T^{ijk}[\mathbf{M}]=\frac{n_0}{\gamma K_2(\gamma)}(6K_3(\gamma)+\gamma K_2(\gamma))u_iu_ju_k \\
&\hspace{2cm}+\frac{n_0K_3(\gamma)}{\gamma K_2(\gamma)}(u_i\delta_{jk}+u_j\delta_{ik}+u_k\delta_{ij}).
\end{split}
\end{equation}
\end{lemma}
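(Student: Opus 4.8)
The plan is to combine the rest-frame values of Lemma \ref{rest} with the boost relation \eqref{abg}, exploiting the fact that $\bar{T}^{\alpha\beta\gamma}$ has only two independent nonzero entries. First I would record the elementary facts about the boost matrix $\bar{\Lambda}$: its zeroth column is exactly the rescaled four-velocity, $\bar{\Lambda}^{\mu}_{0}=u^{\mu}/c$ (immediate from $\tilde r=u^0/c$, $v_i=cu_i/u^0$), and since $\bar{\Lambda}$ is a Lorentz transformation one has $\bar{\Lambda}^{\alpha}_{\mu}\bar{\Lambda}^{\beta}_{\nu}g^{\mu\nu}=g^{\alpha\beta}$. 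Splitting off the timelike index in this relation gives the contraction identity
\begin{equation*}
\sum_{i=1}^{3}\bar{\Lambda}^{\alpha}_{i}\bar{\Lambda}^{\beta}_{i}=g^{\alpha\beta}+\frac{u^{\alpha}u^{\beta}}{c^{2}},
\end{equation*}
which, together with $\bar{\Lambda}^{\alpha}_{0}=u^{\alpha}/c$, is all the algebraic input needed.

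Next, by \eqref{rest0ij} the only surviving terms in \eqref{abg} are the one with all three lower indices equal to $0$ and the three with a matched pair $(0,i,i)$, $(i,0,i)$, $(i,i,0)$; using $\bar{T}^{0ii}=\bar{T}^{i0i}=\bar{T}^{ii0}$ from \eqref{rest0ii} this yields
\begin{equation*}
T^{\alpha\beta\gamma}[\mathbf{M}]=\frac{u^{\alpha}u^{\beta}u^{\gamma}}{c^{3}}\,\bar{T}^{000}
+\bar{T}^{0ii}\sum_{i=1}^{3}\Big(\bar{\Lambda}^{\alpha}_{0}\bar{\Lambda}^{\beta}_{i}\bar{\Lambda}^{\gamma}_{i}
+\bar{\Lambda}^{\alpha}_{i}\bar{\Lambda}^{\beta}_{0}\bar{\Lambda}^{\gamma}_{i}
+\bar{\Lambda}^{\alpha}_{i}\bar{\Lambda}^{\beta}_{i}\bar{\Lambda}^{\gamma}_{0}\Big).
\end{equation*}
Applying the contraction identity to each of the three sums (e.g.\ $\sum_i\bar{\Lambda}^{\alpha}_{0}\bar{\Lambda}^{\beta}_{i}\bar{\Lambda}^{\gamma}_{i}=\frac{u^{\alpha}}{c}\big(g^{\beta\gamma}+\frac{u^{\beta}u^{\gamma}}{c^{2}}\big)$) collapses everything to the manifestly covariant form
\begin{equation*}
T^{\alpha\beta\gamma}[\mathbf{M}]=\frac{\bar{T}^{000}+3\bar{T}^{0ii}}{c^{3}}\,u^{\alpha}u^{\beta}u^{\gamma}
+\frac{\bar{T}^{0ii}}{c}\big(u^{\alpha}g^{\beta\gamma}+u^{\beta}g^{\alpha\gamma}+u^{\gamma}g^{\alpha\beta}\big).
\end{equation*}
(Equivalently, this tensorial structure is forced a priori by Lorentz covariance, since $T^{\alpha\beta\gamma}$ is a symmetric rank-$3$ tensor built only from $u^{\mu}$, $g^{\mu\nu}$ and scalars, and the two scalar coefficients are then read off in the rest frame; I would present whichever route is shorter.)

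Finally I would substitute $\bar{T}^{000}=\frac{n_{0}m^{2}c^{3}[3K_{3}(\gamma)+\gamma K_{2}(\gamma)]}{\gamma K_{2}(\gamma)}$ and $\bar{T}^{0ii}=\frac{n_{0}m^{2}c^{3}K_{3}(\gamma)}{\gamma K_{2}(\gamma)}$ from Lemma \ref{rest}, so that the triple-velocity coefficient becomes $\frac{n_{0}m^{2}(6K_{3}(\gamma)+\gamma K_{2}(\gamma))}{\gamma K_{2}(\gamma)}$ and the $u\otimes g$ coefficient becomes $\frac{n_{0}m^{2}c^{2}K_{3}(\gamma)}{\gamma K_{2}(\gamma)}$; then I specialize the index triple to $(0,0,0)$, $(0,0,i)$, $(0,i,j)$, $(i,j,k)$, using $g^{00}=-1$, $g^{0i}=0$, $g^{ij}=\delta_{ij}$ together with the identity $(u^{0})^{2}-|u|^{2}=c^{2}$ to eliminate the bare $c^{2}$'s in the first two cases, which recovers exactly \eqref{inert}. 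The only real obstacle is index bookkeeping: verifying the contraction identity for $\bar{\Lambda}$ and keeping the three symmetrizing permutations straight; once that is in place the Bessel-function algebra and the $c^{2}=(u^{0})^{2}-|u|^{2}$ rewriting are routine.
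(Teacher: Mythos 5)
Your derivation is correct and produces exactly \eqref{inert}, but it takes a genuinely different route from the paper. The paper proves Lemma \ref{general} by directly expanding the contraction $\bar{\Lambda}^{\alpha}_{\alpha'}\bar{\Lambda}^{\beta}_{\beta'}\bar{\Lambda}^{\gamma}_{\gamma'}\bar{T}^{\alpha'\beta'\gamma'}$ separately for each of the four index triples $(0,0,0)$, $(0,0,i)$, $(0,i,j)$, $(i,j,k)$, plugging in the explicit entries of the boost matrix $\bar\Lambda$ and simplifying case by case. You instead isolate the two algebraic facts that make the whole computation collapse, namely $\bar\Lambda^{\mu}_{0}=u^{\mu}/c$ and the contraction identity $\sum_{i}\bar\Lambda^{\alpha}_{i}\bar\Lambda^{\beta}_{i}=g^{\alpha\beta}+u^{\alpha}u^{\beta}/c^{2}$ (which follows from the Lorentz condition $g^{\mu\nu}\bar\Lambda^{\alpha}_{\mu}\bar\Lambda^{\beta}_{\nu}=g^{\alpha\beta}$ and the symmetry of the boost), and thereby arrive at the manifestly covariant formula
\[
T^{\alpha\beta\gamma}[\mathbf{M}]=\frac{n_{0}m^{2}\bigl(6K_{3}(\gamma)+\gamma K_{2}(\gamma)\bigr)}{\gamma K_{2}(\gamma)}\,u^{\alpha}u^{\beta}u^{\gamma}
+\frac{n_{0}m^{2}c^{2}K_{3}(\gamma)}{\gamma K_{2}(\gamma)}\,\bigl(u^{\alpha}g^{\beta\gamma}+u^{\beta}g^{\alpha\gamma}+u^{\gamma}g^{\alpha\beta}\bigr),
\]
from which all four lines of \eqref{inert} drop out by inserting $g^{00}=-1$, $g^{0i}=0$, $g^{ij}=\delta_{ij}$ and using $c^{2}=(u^{0})^{2}-|u|^{2}$ to rewrite the timelike cases. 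What your route buys is a single unified formula and a transparent explanation of the tensorial structure (which you can also obtain a priori from Lorentz covariance of $T^{\alpha\beta\gamma}$ as a symmetric rank-$3$ tensor built from $u^{\mu}$, $g^{\mu\nu}$ and scalars); what the paper's brute-force route buys is that it requires no appeal to the general structure of Lorentz transformations, only Lemma \ref{rest} and the explicit matrix. Both are correct; yours is shorter and less error-prone on the index bookkeeping, and I verified that the coefficients $\bigl(\bar T^{000}+3\bar T^{0ii}\bigr)/c^{3}$ and $\bar T^{0ii}/c$ do yield the stated Bessel-function combinations.
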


\begin{proof}
We first prove $T^{000}[\mathbf{M}]$ in \eqref{inert}. By Lemma \ref{rest} and \eqref{abg}, we have
\begin{align*}
T^{000}[\mathbf{M}]=& \bar{\Lambda}^0_{\alpha}\bar{\Lambda}^0_{\beta}\bar{\Lambda}^0_{\gamma}\bar{T}^{\alpha \beta\gamma}\\
=&\sum_{\alpha=0}\tilde{r}\Big(\tilde{r}^2\frac{n_0[3K_3(\gamma)+\gamma K_2(\gamma)]}{\gamma K_2(\gamma)}+\tilde{r}^2|v|^2\frac{n_0K_3(\gamma)}{\gamma K_2(\gamma)}\Big)\\
&+\sum_{\alpha=1}^3\tilde{r}v_{\alpha}\times 2\tilde{r}^2v_{\alpha}\frac{n_0K_3(\gamma)}{\gamma K_2(\gamma)}\\
=& \frac{n_0}{\gamma K_2(\gamma)}\Big([3K_3(\gamma)+\gamma K_2(\gamma)](u^0)^3+K_3(\gamma)u^0|u|^2\Big)+\frac{2n_0K_3(\gamma)}{\gamma K_2(\gamma)}u^0|u|^2\\
=&\frac{n_0}{\gamma K_2(\gamma)}[(3K_3(\gamma)+\gamma K_2(\gamma))(u^0)^3+3K_3(\gamma)u^0|u|^2].
\end{align*}
For $T^{00i}[\mathbf{M}]$, we have
\begin{align*}
T^{00i}[\mathbf{M}]=& \bar{\Lambda}^0_{\alpha}\bar{\Lambda}^0_{\beta}\bar{\Lambda}^i_{\gamma}\bar{T}^{\alpha \beta\gamma}\\
=&\sum_{\alpha=0}\tilde{r}\Big(\tilde{r}^2v_{i}\frac{n_0[3K_3(\gamma)+\gamma K_2(\gamma)]}{\gamma K_2(\gamma)}\\
&+\sum_{j=1}^3\tilde{r}v_{j}\Big(\frac{\tilde{r}-1}{|v|^2}v_iv_j+\delta_{ij}\Big)\frac{n_0K_3(\gamma)}{\gamma K_2(\gamma)}\Big)\\
&+\sum_{\alpha=1}^3\tilde{r}v_{\alpha}\Big[\tilde{r}^2v_{\alpha}v_i
+\tilde{r}\Big(\frac{\tilde{r}-1}{|v|^2}v_iv_{\alpha}+\delta_{i\alpha}\Big)
\Big]\frac{n_0K_3(\gamma)}{\gamma K_2(\gamma)}\\
=& \frac{n_0}{\gamma K_2(\gamma)}[4K_3(\gamma)+\gamma K_2(\gamma)](u^0)^2u_i+\frac{n_0K_3(\gamma)}{\gamma K_2(\gamma)}u_i[|u|^2+(u^0)^2]\\
=&\frac{n_0}{\gamma K_2(\gamma)}\left[(5K_3(\gamma)+\gamma K_2(\gamma))(u^0)^2u_i+K_3(\gamma)|u|^2u_i\right].
\end{align*}
Similarly, we obtain
\begin{align*}
T^{0ij}[\mathbf{M}]=& \bar{\Lambda}^0_{\alpha}\bar{\Lambda}^i_{\beta}\bar{\Lambda}^j_{\gamma}\bar{T}^{\alpha \beta\gamma}\\
=&\sum_{\alpha=0}\tilde{r}\Big(\tilde{r}^2v_{i}v_{j}\frac{n_0[3K_3(\gamma)+\gamma K_2(\gamma)]}{\gamma K_2(\gamma)}\\
&+\sum_{k=1}^3\Big(\frac{\tilde{r}-1}{|v|^2}v_iv_k+\delta_{ik}\Big)\Big(\frac{\tilde{r}-1}{|v|^2}v_jv_k+\delta_{jk}\Big)\frac{n_0K_3(\gamma)}{\gamma K_2(\gamma)}\Big)\\
&+\sum_{\alpha=1}^3\tilde{r}v_{\alpha}\Big[\tilde{r}\frac{v_{i}}{c}\Big(\frac{\tilde{r}-1}{|v|^2}v_jv_{\alpha}+\delta_{j\alpha}\Big)
+\tilde{r}v_{j}\Big(\frac{\tilde{r}-1}{|v|^2}v_iv_{\alpha}+\delta_{i\alpha}\Big)\Big]\frac{n_0K_3(\gamma)}{\gamma K_2(\gamma)}\\
=& \frac{n_0}{\gamma K_2(\gamma)}\{[4K_3(\gamma)+\gamma K_2(\gamma)]u^0u_iu_j+\delta_{ij}K_3(\gamma)u^0\}+2\frac{n_0K_3(\gamma)}{\gamma K_2(\gamma)}u^0u_iu_j\\
=&\frac{n_0}{\gamma K_2(\gamma)}\left[(6K_3(\gamma)+\gamma K_2(\gamma))u^0u_iu_j+\delta_{ij}K_3(\gamma)u^0\right],
\end{align*}
and
\begin{align*}
T^{ijk}[\mathbf{M}]=& \bar{\Lambda}^i_{\alpha}\bar{\Lambda}^j_{\beta}\bar{\Lambda}^k_{\gamma}\bar{T}^{\alpha \beta\gamma}\\
=&\sum_{\alpha=0}\tilde{r}v_{i}\Big(\tilde{r}^2v_{j}v_{k}\frac{n_0[3K_3(\gamma)+\gamma K_2(\gamma)]}{\gamma K_2(\gamma)}\\
&+\sum_{l=1}^3\Big(\frac{\tilde{r}-1}{|v|^2}v_jv_l+\delta_{jl}\Big)\Big(\frac{\tilde{r}-1}{|v|^2}v_kv_l+\delta_{kl}\Big)
\frac{n_0K_3(\gamma)}{\gamma K_2(\gamma)}\Big)\\
&+\sum_{\alpha=1}^3\Big(\frac{\tilde{r}-1}{|v|^2}v_iv_{\alpha}+\delta_{i\alpha}\Big)
\Big[\tilde{r}v_{j}\Big(\frac{\tilde{r}-1}{|v|^2}v_kv_{\alpha}+\delta_{k\alpha}\Big)\\
&+\tilde{r}v_{k}\Big(\frac{\tilde{r}-1}{|v|^2}v_jv_{\alpha}+\delta_{j\alpha}\Big)\Big]\frac{n_0K_3(\gamma)}{\gamma K_2(\gamma)}\\
=& \frac{n_0}{\gamma K_2(\gamma)}\{[3K_3(\gamma)+\gamma K_2(\gamma)]u_iu_ju_k+u_i(u_ju_k+\delta_{jk})K_3(\gamma)\}\\
&+\frac{n_0K_3(\gamma)}{\gamma K_2(\gamma)}[u_j(u_iu_k+\delta_{ik})+u_k(u_iu_j+\delta_{ij})]\\
=&\frac{n_0}{\gamma K_2(\gamma)}[6K_3(\gamma)+\gamma K_2(\gamma)]u_iu_ju_k \\
&+\frac{n_0K_3(\gamma)}{\gamma K_2(\gamma)}(u_i\delta_{jk}+u_j\delta_{ik}+u_k\delta_{ij}).
\end{align*}

\end{proof}

With the preparation, now we construct the coefficients $(F_n, E_n, B_n), 1\leq n\leq 2k-1$ in a conductive way,  and estimate their regularities.

\begin{theorem}\label{fn} For any $n\in[0,2k-2]$, assume that $(F_i, E_i, B_i)$ have been constructed for all $0\leq i\leq n$. Then  the  microscopic part ${\{\bf I-P\}}\Big(\frac{F_{n+1}}{\sqrt{\mathbf{M}}}\Big)$ can be written as:
\begin{align*}
\begin{aligned}
{\{\bf I-P\}}\Big(\frac{F_{n+1}}{\sqrt{\mathbf{M}}}\Big)=&L^{-1}\Big[-\frac{1}{\sqrt{\mathbf{M}}}\Big(\partial_tF_n
+\hat{p}\cdot \nabla_xF_n-\sum_{\substack{i+j=n+1\\i,j\geq1}}Q(F_i,F_j)\\
 &\hspace{1cm}-\sum_{\substack{i+j=n\\i,j\geq0}}\Big(E_i+\hat{p} \times B_i \Big)\cdot\nabla_pF_j\Big)\Big].
\end{aligned}
\end{align*}
And $a_{n+1}(t,x), b_{n+1}(t,x), c_{n+1}(t,x)$, $E_{n+1}(t,x), B_{n+1}(t,x)$ satisfy the following system:
\begin{align}\label{number}
\begin{aligned}
& \partial_t\Big(n_0u^0a_{n+1}+(e_0+P_0)u^0(u\cdot b_{n+1})+[e_0(u^0)^2+P_0|u|^2]c_{n+1}\Big)\\
&+\nabla_x\cdot\Big(n_0u a_{n+1}+(e_0+P_0)u (u\cdot b_{n+1})+P_0 b_{n+1}+(e_0+P_0)u^0 u c_{n+1}\Big)\\
&+\nabla_x\cdot\int_{\mathbb R^3}  \frac{p}{p^0}\sqrt{\mathbf{M}}{\{\bf I-P\}}\Big(\frac{F_{n+1}}{\sqrt{\mathbf{M}}}\Big)\,dp=0,
\end{aligned}
\end{align}

\begin{align}
& \partial_t\Big((e_0+P_0)u^0u_j a_{n+1}+\frac{n_0}{\gamma K_2(\gamma)}[(6K_3(\gamma)+\gamma K_2(\gamma))u^0 u_j (u\cdot b_{n+1})\nonumber\\
&+K_3(\gamma)u^0 b_{n+1,j}]+\frac{n_0}{\gamma K_2(\gamma)}[(5K_3(\gamma)+\gamma K_2(\gamma))(u^0)^2  +K_3(\gamma)|u|^2]u_jc_{n+1}\Big)\nonumber\\
&+\nabla_x\cdot\Big((e_0+P_0)u_j u a_{n+1}+\frac{n_0}{\gamma K_2(\gamma)}(6K_3(\gamma)+\gamma K_2(\gamma))u_j  u [(u\cdot b_{n+1})+u^0c_{n+1}]\Big)\nonumber\\
&+\partial_{x_j}(P_0a_{n+1})+\nabla_x\cdot\Big[\frac{n_0K_3(\gamma)}{\gamma K_2(\gamma)}(ub_{n+1,j}+u_jb_{n+1})\Big]\nonumber\\
&+ \partial_{x_j}\Big(\frac{n_0K_3(\gamma)}{\gamma K_2(\gamma)}[(u\cdot b_{n+1})+u^0c_{n+1}]\Big)\label{moment}\\
&+E_{0,j}\Big(n_0u^0a_{n+1}+(e_0+P_0)u^0(u\cdot b_{n+1})+[e_0(u^0)^2+P_0|u|^2]c_{n+1}\Big)\nonumber\\
&+\Big[\Big(n_0u a_{n+1}+(e_0+P_0)u (u\cdot b_{n+1})+P_0 b_{n+1}+(e_0+P_0)u^0 u c_{n+1}\Big)\times B_0\Big]_j\nonumber\\
&+n_0u^0E_{n+1,j}+\Big(n_0u\times B_{n+1}\Big)_j\nonumber\\
&+\sum_{\substack{k+l=n+1\\k,l\geq1}}E_{k,j}\Big(n_ku^0_ka_{l}+(e_k+P_k)u^0_k(u_k\cdot b_{l})+P_kb_{l}+[e_k(u^0_k)^2+P_k|u_k|^2]c_{l}\Big)\nonumber\\
&+\sum_{\substack{k+l=n+1\\k,l\geq1}}\Big[\Big(n_ku_k^0 a_{l}+(e_k+P_k)u_k (u_k\cdot b_{l})+(e_k+P_k)u^0_k u_k c_{l}\Big)\times B_k\Big]_j\nonumber\\
&+\nabla_x\cdot\int_{\mathbb R^3}  \frac{p_jp}{p^0}\sqrt{\mathbf{M}}{\{\bf I-P\}}\Big(\frac{F_{n+1}}{\sqrt{\mathbf{M}}}\Big)\,dp+\Big[\int_{\mathbb R^3}  \hat{p}\times B_0\sqrt{\mathbf{M}}{\{\bf I-P\}}\Big(\frac{F_{n+1}}{\sqrt{\mathbf{M}}}\Big)\,dp\Big]_j\nonumber\\
&+\sum_{\substack{k+l=n+1\\k,l\geq1}}\Big[\int_{\mathbb R^3}  \hat{p}\times B_k\sqrt{\mathbf{M}}{\{\bf I-P\}}\Big(\frac{F_{l}}{\sqrt{\mathbf{M}}}\Big)\,dp\Big]_j=0,\nonumber
\end{align}
for $j=1, 2, 3$ with $b_{n+1}=(b_{n+1,1}, b_{n+1,2}, b_{n+1,3})$, $E_{n+1}=(E_{n+1,1}, E_{n+1,2}, E_{n+1,3})$,

\begin{align}
& \partial_t\Big([e_0(u^0)^2+P_0|u|^2] a_{n+1}+\frac{n_0}{\gamma K_2(\gamma)}[(5K_3(\gamma)+\gamma K_2(\gamma)) (u^0)^2+K_3(\gamma)|u|^2]\nonumber\\
&\times (u\cdot b_{n+1})+\frac{n_0}{\gamma K_2(\gamma)}[(3K_3(\gamma)+\gamma K_2(\gamma))(u^0)^2  +3K_3(\gamma)|u|^2]u^0c_{n+1}\Big)\nonumber\\
&+\nabla_x\cdot\Big((e_0+P_0)u^0 u a_{n+1}+\frac{n_0}{\gamma K_2(\gamma)}(6K_3(\gamma)+\gamma K_2(\gamma))u^0 u(u\cdot b_{n+1})\label{energy}\\
&+\frac{n_0K_3(\gamma)}{\gamma K_2(\gamma)}u^0b_{n+1}+\frac{n_0}{\gamma K_2(\gamma)}[(5K_3(\gamma)+\gamma K_2(\gamma))(u^0)^2\nonumber\\
&+K_3(\gamma)|u|^2]uc_{n+1} \Big)+n_0u\cdot E_{n+1}+ n_0u\cdot E_0 a_{n+1}\nonumber\\
&+(e_0+P_0)(u\cdot b_{n+1})(u\cdot E_0)+P_0E_0\cdot b_{n+1}\nonumber\\
&+(e_0+P_0)u^0(u\cdot E_0)c_{n+1}+\int_{\mathbb R^3}  \hat{p}\sqrt{\mathbf{M}}{\{\bf I-P\}}\Big(\frac{F_{n+1}}{\sqrt{\mathbf{M}}}\Big)\,dp \cdot E_0\nonumber\\
&+ \sum_{\substack{k+l=n+1\\k,l\geq1}}\Big[n_ku_k\cdot E_k a_{l}+(e_k+P_k)(u_k\cdot b_{l})(u\cdot E_k)+P_0E_k\cdot b_{l}\nonumber\\
&+(e_k+P_k)u^0_k(u_k\cdot E_k)c_{l}+\int_{\mathbb R^3}  \hat{p}\sqrt{\mathbf{M}}{\{\bf I-P\}}\Big(\frac{F_{l}}{\sqrt{\mathbf{M}}}\Big)\,dp \cdot E_k\Big]=0,\nonumber
\end{align}

\begin{align}
&\partial_tE_{n+1}-\nabla_x \times B_{n+1} \nonumber\\
&\hspace{1cm}=n_0u  a_{n+1}+(e_0+P_0)u (u\cdot b_{n+1})\nonumber\\
&\hspace{1cm}+(e_0+P_0)u^0 u c_{n+1}+\int_{\mathbb R^3}  \hat{p}\sqrt{\mathbf{M}}{\{\bf I-P\}}\Big(\frac{F_{n+1}}{\sqrt{\mathbf{M}}}\Big)\,dp, \nonumber\\
 &\partial_t B_{n+1}+ \nabla_x \times E_{n+1}=0,\label{EM}\\
& \nabla_x\cdot E_{n+1}=-\Big(n_0u^0a_{n+1}+(e_0+P_0)u^0(u\cdot b_{n+1})\nonumber\\
&\hspace{2cm}+[e_0(u^0)^2+P_0|u|^2]c_{n+1}\Big), \nonumber\\
& \nabla_x\cdot  B_{n+1}=0.\nonumber
\end{align}
Furthermore, assume $a_{n+1}(0,x), b_{n+1}(0,x), c_{n+1}(0,x), E_{n+1}(0,x)$, $B_{n+1}(0,x)\in H^N, N\geq0$
be given initial data to the system consisted of equations \eqref{number}, \eqref{moment}, \eqref{energy} and \eqref{EM}. Then the
linear system is well-posed in $C^0([0,\infty);H^N)$. Moreover, it holds that
\begin{align}
&|F_{n+1}|\lesssim (1+t)^{n+1}\mathbf{M}^{1_-},\qquad |\nabla_pF_{n+1}|\lesssim (1+t)^{n+1}\mathbf{M}^{1_-},\nonumber\\
&|\nabla_xF_{n+1}|\lesssim (1+t)^{n}\mathbf{M}^{1_-},\qquad |\nabla_p^2F_{n+1}|\lesssim (1+t)^{n+1}\mathbf{M}^{1_-},\nonumber\\
&|\nabla_x\nabla_pF_{n+1}|\lesssim (1+t)^{n+1}\mathbf{M}^{1_-},\label{growth0}\\
&|E_{n+1}|+|B_{n+1}|+|\nabla_xE_{n+1}|+|\nabla_xE_{n+1}|\lesssim(1+t)^{n+1}.\nonumber
\end{align}

\end{theorem}

\begin{proof}
From the equation of $F_n$ in \eqref{expan2},  the microscopic part ${\{\bf I-P\}}\Big(\frac{F_{n+1}}{\sqrt{\mathbf{M}}}\Big)$ can be written as
\begin{align*}
\begin{aligned}
{\{\bf I-P\}}\Big(\frac{F_{n+1}}{\sqrt{\mathbf{M}}}\Big)=&L^{-1}\Big[-\frac{1}{\sqrt{\mathbf{M}}}\Big(\partial_tF_n+\hat{p}\cdot \nabla_xF_n-\sum_{\substack{i+j=n+1\\i,j\geq1}}Q(F_i,F_j)\\
 &\hspace{1cm}-\sum_{\substack{i+j=n\\i,j\geq0}}\Big(E_i+\hat{p} \times B_i \Big)\cdot\nabla_pF_j\Big)\Big].
\end{aligned}
\end{align*}
We first prove the equation \eqref{number}.
Note that, from \eqref{moments3},
\begin{align}
\int_{\mathbb R^3}    F_{n+1} dp&=\int_{\mathbb R^3}  [a_{n+1}+b_{n+1}\cdot p+c_{n+1} p_0]\mathbf{M} dp\nonumber\\
&=n_0u^0a_{n+1}+(e_0+P_0)u^0(u\cdot b_{n+1})+[e_0(u^0)^2+P_0|u|^2]c_{n+1},\nonumber\\
\int_{\mathbb R^3}  \hat{p}_jF_{n+1}\,dp&=n_0u_j a_{n+1}+(e_0+P_0)u_j (u\cdot b_{n+1})+P_0b_{n+1,j}\label{n01}\\
&+(e_0+P_0)u^0 u_j c_{n+1}+\int_{\mathbb R^3}  \hat{p}_j\sqrt{\mathbf{M}}{\{\bf I-P\}}\Big(\frac{F_{n+1}}{\sqrt{\mathbf{M}}}\Big)\,dp,\nonumber
\end{align}
for $j=1, 2, 3,$ and
$$\int_{\mathbb R^3}  \Big(E_{n+1}+\hat{p} \times B_{n+1} \Big)\cdot\nabla_pF_0dp=\int_{\mathbb R^3}\Big(E_0+\hat{p} \times B_0 \Big)\cdot\nabla_pF_{n+1} dp=0.$$

Then, we integrate the equation of $F_{n+1}$  in \eqref{expan2} w.r.t. $p$ to get \eqref{number}.

Now we prove \eqref{moment}. From \eqref{moments3} and Lemma \ref{general}, it holds that
\begin{align*}
&\int_{\mathbb R^3}  p_j  F_{n+1} dp\\
&\hspace{0.5cm}=\int_{\mathbb R^3} p_j [a_{n+1}+b_{n+1}\cdot p+c_{n+1} p_0]\mathbf{M} dp\\
&\hspace{0.5cm}=(e_0+P_0)u^0u_j a_{n+1}+\frac{n_0}{\gamma K_2(\gamma)}[(6K_3(\gamma)+\gamma K_2(\gamma))u^0 u_j (u\cdot b_{n+1})\\
&\hspace{0.5cm}+K_3(\gamma)u^0 b_{n+1,j}]+\frac{n_0}{\gamma K_2(\gamma)}[(5K_3(\gamma)+\gamma K_2(\gamma))(u^0)^2    +K_3(\gamma)|u|^2]u_jc_{n+1},
\end{align*}
\begin{align*}
&\int_{\mathbb R^3}  \frac{p_jp}{p^0}  F_{n+1} dp\\
&\hspace{0.5cm}=\int_{\mathbb R^3} \frac{p_jp}{p^0} [a_{n+1}+b_{n+1}\cdot p+c_{n+1} p_0]\mathbf{M} dp+\int_{\mathbb R^3}  \frac{p_jp}{p^0}\sqrt{\mathbf{M}}{\{\bf I-P\}}\Big(\frac{F_{n+1}}{\sqrt{\mathbf{M}}}\Big)\,dp\\
&\hspace{0.5cm}=(e_0+P_0)u_j u a_{n+1}+\frac{n_0}{\gamma K_2(\gamma)}(6K_3(\gamma)+\gamma K_2(\gamma))u_j  u [(u\cdot b_{n+1})+u^0c_{n+1}]\nonumber\\
&\hspace{0.5cm}+e_j P_0a_{n+1}+\frac{n_0K_3(\gamma)}{\gamma K_2(\gamma)}(ub_{n+1,j}+u_jb_{n+1})\nonumber\\
&\hspace{0.5cm}+ e_j\frac{n_0K_3(\gamma)}{\gamma K_2(\gamma)}[(u\cdot b_{n+1})+u^0c_{n+1}]+\int_{\mathbb R^3}  \frac{p_jp}{p^0}\sqrt{\mathbf{M}}{\{\bf I-P\}}\Big(\frac{F_{n+1}}{\sqrt{\mathbf{M}}}\Big)\,dp,
\end{align*}
where  $e_j, (j=1,2,3),$ are the unit base vectors in $\mathbb R^3$, and
\begin{align*}
&-\int_{\mathbb R^3}  p_j\Big(E_{n+1}+\hat{p} \times B_{n+1} \Big)\cdot\nabla_pF_0dp\\
&\hspace{0.5cm}=\int_{\mathbb R^3}E_{n+1,j}F_{0}dp+\int_{\mathbb R^3}\Big(\hat{p} \times B_{n+1} \Big)_jF_{0} dp \\
&\hspace{0.5cm} =n_0u^0E_{n+1,j}+\Big(n_0u\times B_{n+1}\Big)_j,\\
&-\int_{\mathbb R^3}p_j\Big(E_0+\hat{p} \times B_0 \Big)\cdot\nabla_pF_{n+1} dp\\
&\hspace{0.5cm} =\int_{\mathbb R^3}E_{0,j}F_{n+1}dp+\int_{\mathbb R^3}\Big(\hat{p} \times B_0 \Big)_jF_{n+1} dp\\
&\hspace{0.5cm} =E_{0,j}\Big(n_0u^0a_{n+1}+(e_0+P_0)u^0(u\cdot b_{n+1})+[e_0(u^0)^2+P_0|u|^2]c_{n+1}\Big)\nonumber\\
&\hspace{0.5cm}+\Big[\Big(n_0u a_{n+1}+(e_0+P_0)u (u\cdot b_{n+1})+P_0b_{n+1}+(e_0+P_0)u^0 u c_{n+1}\Big)\times B_0\Big]_j\nonumber\\
&\hspace{0.5cm}+\int_{\mathbb R^3}\Big(\hat{p} \times B_0 \Big)_j\sqrt{\mathbf{M}}{\{\bf I-P\}}\Big(\frac{F_{n+1}}{\sqrt{\mathbf{M}}}\Big) dp .
\end{align*}
Then, we multiply the equation of $F_{n+1}$  in \eqref{expan2} by $p_j$ and integrate the resulting equation w.r.t. $p$ to get \eqref{moment}.

Next, we show that \eqref{energy} holds. By \eqref{moments3} and Lemma \ref{general}, one has
\begin{align*}
&\int_{\mathbb R^3}  p_0  F_{n+1} dp\\
&\hspace{0.5cm}=\int_{\mathbb R^3} p_0 [a_{n+1}+b_{n+1}\cdot p+c_{n+1} p_0]\mathbf{M} dp\\
&\hspace{0.5cm}=[e_0(u^0)^2+P_0|u|^2] a_{n+1}+\frac{n_0}{\gamma K_2(\gamma)}[(5K_3(\gamma)+\gamma K_2(\gamma)) (u^0)^2+K_3(\gamma)|u|^2]\\
&\hspace{0.5cm}\times (u\cdot b_{n+1})+\frac{n_0}{\gamma K_2(\gamma)}[(3K_3(\gamma)+\gamma K_2(\gamma))(u^0)^2  +3K_3(\gamma)|u|^2]u^0c_{n+1},
\end{align*}
and
\begin{align*}
&-\int_{\mathbb R^3}  p^0\Big(E_{n+1}+\hat{p} \times B_{n+1} \Big)\cdot\nabla_pF_0dp\\
&\hspace{0.5cm} =\int_{\mathbb R^3}E_{n+1}\cdot \hat{p}F_{0}dp=n_0u\cdot E_{n+1},\\
&-\int_{\mathbb R^3}  p^0\Big(E_{0}+\hat{p} \times B_0 \Big)\cdot\nabla_pF_{n+1}dp\\
&\hspace{0.5cm} =\int_{\mathbb R^3} E_{0}\cdot \hat{p}F_{n+1}dp\\
&\hspace{0.5cm} = n_0u\cdot E_0 a_{n+1}+(e_0+P_0)(u\cdot b_{n+1})(u\cdot E_0)+P_0E_0\cdot b_{n+1}\nonumber\\
&\hspace{0.5cm}+(e_0+P_0)u^0(u\cdot E_0)c_{n+1}+\int_{\mathbb R^3}  \hat{p}\sqrt{\mathbf{M}}{\{\bf I-P\}}\Big(\frac{F_{n+1}}{\sqrt{\mathbf{M}}}\Big)\,dp \cdot E_0.
\end{align*}
We integrate the equation of $F_n$  in \eqref{expan2} with $p^0$ over $\mathbb R^3_p$ to obtain \eqref{energy}.
Finally, it is straightforward to obtain the Maxwell system \eqref{EM} of $E_{n+1}, B_{n+1}$  from \eqref{n01}.

Now we prove the well-posedness of the system \eqref{number}, \eqref{moment}, \eqref{energy} and \eqref{EM}.  By conditions \eqref{cdn} and the equation $ \partial_t(n_0 u^0) + \nabla_x\cdot(n_0 u) =0$ in \eqref{order01}, we simplify equations \eqref{number}, \eqref{moment}, \eqref{energy} as follows:
\begin{align}\label{number01}
\begin{aligned}
& n_0u^0\partial_t\Big(a_{n+1}+h(u\cdot b_{n+1})+hu^0c_{n+1}\Big)-\partial_t(P_0c_{n+1})\\
&+n_0u\cdot\nabla_x\Big( a_{n+1}+h (u\cdot b_{n+1})+hu^0  c_{n+1}\Big)+\nabla_x\cdot(P_0b_{n+1})\\
&+\nabla_x\cdot\int_{\mathbb R^3}  \hat{p}\sqrt{\mathbf{M}}{\{\bf I-P\}}\Big(\frac{F_{n+1}}{\sqrt{\mathbf{M}}}\Big)\,dp=0,
\end{aligned}
\end{align}

\begin{align}
& n_0u^0\partial_t\Big(hu_j a_{n+1}+\frac{1}{\gamma K_2(\gamma)}[(6K_3(\gamma)+\gamma K_2(\gamma)) u_j (u\cdot b_{n+1}+u^0  c_{n+1})\nonumber\\
&+K_3(\gamma) b_{n+1,j}]\Big)-\partial_t\Big(\frac{n_0K_3(\gamma)}{\gamma K_2(\gamma)}u_jc_{n+1}\Big)\nonumber\\
&+n_0u\cdot\nabla_x\Big(hu_j a_{n+1}+\frac{1}{\gamma K_2(\gamma)}[(6K_3(\gamma)+\gamma K_2(\gamma))u_j[(u\cdot b_{n+1})+u^0c_{n+1}]\nonumber\\
&+K_3(\gamma) b_{n+1,j}]\Big)+\partial_{x_j}(P_0a_{n+1})+\nabla_x\cdot\Big[\frac{n_0K_3(\gamma)}{\gamma K_2(\gamma)}u_jb_{n+1}\Big]\nonumber\\
&+ \partial_{x_j}\Big(\frac{n_0K_3(\gamma)}{\gamma K_2(\gamma)}[(u\cdot b_{n+1})+u^0c_{n+1}]\Big)\label{moment01}\\
&+E_{0,j}\Big(n_0u^0a_{n+1}+(e_0+P_0)u^0(u\cdot b_{n+1})+[e_0(u^0)^2+P_0|u|^2]c_{n+1}\Big)\nonumber\\
&+\Big[\Big(n_0u a_{n+1}+(e_0+P_0)u (u\cdot b_{n+1})+P_0b_{n+1}+(e_0+P_0)u^0 u c_{n+1}\Big)\times B_0\Big]_j\nonumber\\
&+n_0u^0E_{n+1,j}+\Big(n_0u^0\times B_{n+1}\Big)_j\nonumber\\
&+\sum_{\substack{k+l=n+1\\k,l\geq1}}E_{k,j}\Big(n_0u^0a_{l}+(e_0+P_0)u^0(u\cdot b_{l})+[e_0(u^0)^2+P_0|u|^2]c_{l}\Big)\nonumber\\
&+\sum_{\substack{k+l=n+1\\k,l\geq1}}\Big[\Big(n_0u a_{l}+(e_0+P_0)u (u\cdot b_{l})+P_0b_l+(e_0+P_0)u^0 u c_{l}\Big)\times B_k\Big]_j\nonumber\\
&+\nabla_x\cdot\int_{\mathbb R^3}  \frac{p_jp}{p^0}\sqrt{\mathbf{M}}{\{\bf I-P\}}\Big(\frac{F_{n+1}}{\sqrt{\mathbf{M}}}\Big)\,dp+\Big[\int_{\mathbb R^3}  \hat{p}\times B_0\sqrt{\mathbf{M}}{\{\bf I-P\}}\Big(\frac{F_{n+1}}{\sqrt{\mathbf{M}}}\Big)\,dp\Big]_j\nonumber\\
&+\sum_{\substack{k+l=n+1\\k,l\geq1}}\Big[\int_{\mathbb R^3}  \hat{p}\times B_k\sqrt{\mathbf{M}}{\{\bf I-P\}}\Big(\frac{F_{l}}{\sqrt{\mathbf{M}}}\Big)\,dp\Big]_j=0,\nonumber
\end{align}
and
\begin{align}
& n_0u^0\partial_t\Big(hu^0 a_{n+1}+\frac{1}{\gamma K_2(\gamma)}[(6K_3(\gamma)+\gamma K_2(\gamma)) u^0](u\cdot b_{n+1})\nonumber\\
&+\frac{1}{\gamma K_2(\gamma)}[(3K_3(\gamma)+\gamma K_2(\gamma))(u^0)^2 +3K_3(\gamma)|u|^2]c_{n+1}\Big)\nonumber\\
&-\partial_t(P_0a_{n+1})-\partial_t\Big(\frac{n_0K_3(\gamma)}{\gamma K_2(\gamma)}(u\cdot b_{n+1})\Big)\nonumber\\
&+n_0u\cdot\nabla_x\Big(hu^0  a_{n+1}+\frac{1}{\gamma K_2(\gamma)}(6K_3(\gamma)+\gamma K_2(\gamma))u^0(u\cdot b_{n+1})\label{energy01}\\
&+\frac{1}{\gamma K_2(\gamma)}[(3K_3(\gamma)+\gamma K_2(\gamma))(u^0)^2+3K_3(\gamma)|u|^2]c_{n+1} \Big)\nonumber\\
&+\nabla_x\cdot\Big(\frac{n_0K_3(\gamma)}{\gamma K_2(\gamma)}u^0b_{n+1}\Big)+\nabla_x\cdot\Big(\frac{2n_0K_3(\gamma)}{\gamma K_2(\gamma)}uc_{n+1}\Big)\nonumber\\
&+n_0u\cdot E_{n+1}+ n_0u\cdot E_0 a_{n+1}+(e_0+P_0)(u\cdot b_{n+1})(u\cdot E_0)\nonumber\\
&+P_0E_0\cdot b_{n+1}+(e_0+P_0)u^0(u\cdot E_0)c_{n+1}\nonumber\\
&+\int_{\mathbb R^3}  \hat{p}\sqrt{\mathbf{M}}{\{\bf I-P\}}\Big(\frac{F_{n+1}}{\sqrt{\mathbf{M}}}\Big)\,dp \cdot E_0\nonumber\\
&+ \sum_{\substack{k+l=n+1\\k,l\geq1}}\Big[n_0u\cdot E_k a_{l}+(e_0+P_0)(u\cdot b_{l})(u\cdot E_k)+P_0E_k\cdot b_{l}\nonumber\\
&+(e_0+P_0)u^0(u\cdot E_k)c_{l}+\int_{\mathbb R^3}  \hat{p}\sqrt{\mathbf{M}}{\{\bf I-P\}}\Big(\frac{F_{l}}{\sqrt{\mathbf{M}}}\Big)\,dp \cdot E_k\Big]=0.\nonumber
\end{align}
Equations \eqref{number01}, \eqref{moment01}, \eqref{energy01} can be further written as:
\begin{align}\label{number02}
& n_0u^0\Big(\partial_ta_{n+1}+h(u\cdot\partial_t b_{n+1})+hu^0\partial_tc_{n+1}\Big)-P_0\partial_tc_{n+1}\nonumber\\
&+n_0u^0\Big(\partial_t\Big(hu\Big)\cdot b_{n+1})+\partial_t(hu^0)c_{n+1}\Big)-(\partial_tP_0)c_{n+1}\nonumber\\
&+n_0u\cdot\Big( \nabla_xa_{n+1}+h \nabla_xb_{n+1}\cdot u+hu^0 \nabla_x c_{n+1}\Big)+P_0\nabla_x\cdot b_{n+1}\\
&+n_0u\cdot\Big( \nabla_x\Big(h u\Big)\cdot b_{n+1})+\nabla_x\Big(h u^0\Big) c_{n+1}\Big)
+b_{n+1}\cdot\nabla_xP_0\nonumber\\
&+\nabla_x\cdot\int_{\mathbb R^3}  \hat{p}\sqrt{\mathbf{M}}{\{\bf I-P\}}\Big(\frac{F_{n+1}}{\sqrt{\mathbf{M}}}\Big)\,dp=0,\nonumber
\end{align}

\begin{align}
& n_0u^0\Big(hu_j \partial_ta_{n+1}+\frac{1}{\gamma K_2(\gamma)}[(6K_3(\gamma)+\gamma K_2(\gamma)) u_j (u\cdot \partial_tb_{n+1}+u^0  \partial_tc_{n+1})\nonumber\\
& +K_3(\gamma) \partial_tb_{n+1,j}]\Big)-\frac{n_0K_3(\gamma)}{\gamma K_2(\gamma)}u_j\partial_tc_{n+1}-\partial_t\Big(\frac{n_0K_3(\gamma)}{\gamma K_2(\gamma)}u_j\Big)c_{n+1}\nonumber\\
&+n_0u^0\Big[\partial_t\Big(hu_j\Big) a_{n+1}+\partial_t\Big(\frac{m^2(6K_3(\gamma)+\gamma K_2(\gamma))}{\gamma K_2(\gamma)} u_j u\Big)\cdot b_{n+1}\nonumber\\
&+\partial_t\Big(\frac{1}{\gamma K_2(\gamma)}(6K_3(\gamma)+\gamma K_2(\gamma)) u_ju^0\Big) c_{n+1}+\partial_t\Big(\frac{K_3(\gamma)}{\gamma K_2(\gamma)}\Big)b_{n+1,j}\Big]\nonumber\\
&+n_0u\cdot\Big(hu_j \nabla_xa_{n+1}+\frac{1}{\gamma K_2(\gamma)}(6K_3(\gamma)+\gamma K_2(\gamma))u_j[( \nabla_xb_{n+1}\cdot u)+u^0\nabla_xc_{n+1}]\nonumber\\
&+K_3(\gamma) \nabla_xb_{n+1,j}]\Big)+P_0\partial_{x_j}a_{n+1}+\partial_{x_j}P_0a_{n+1}\nonumber\\
&+n_0u\cdot\Big[\nabla_x\Big(hu_j\Big) a_{n+1}+\nabla_x\Big(\frac{1}{\gamma K_2(\gamma)}(6K_3(\gamma)+\gamma K_2(\gamma))u_ju\Big)\cdot b_{n+1}\nonumber\\
&+\nabla_x\Big(\frac{1}{\gamma K_2(\gamma)}(6K_3(\gamma)+\gamma K_2(\gamma))u_ju^0\Big)c_{n+1}\nonumber\\
&+\nabla_x\Big(\frac{K_3(\gamma)}{\gamma K_2(\gamma)}\Big) b_{n+1,j}\Big]+\frac{n_0K_3(\gamma)}{\gamma K_2(\gamma)}u_j\nabla_x\cdot b_{n+1}\nonumber\\
&+ \frac{n_0K_3(\gamma)}{\gamma K_2(\gamma)}[(u\cdot \partial_{x_j}b_{n+1})+u^0\partial_{x_j}c_{n+1}]\nonumber\\
&+\nabla_x\cdot\Big(\frac{n_0K_3(\gamma)}{\gamma K_2(\gamma)}u\Big)b_{n+1,j}+\nabla_x\Big(\frac{n_0K_3(\gamma)}{\gamma K_2(\gamma)}u_j\Big)\cdot b_{n+1}\nonumber\\
&+ \partial_{x_j}\Big(\frac{n_0K_3(\gamma)}{\gamma K_2(\gamma)}u\Big)\cdot b_{n+1}+\partial_{x_j}\Big(\frac{n_0K_3(\gamma)}{\gamma K_2(\gamma)}u^0\Big)c_{n+1}\label{moment02}\\
&+E_{0,j}\Big(n_0u^0a_{n+1}+(e_0+P_0)u^0(u\cdot b_{n+1})+[e_0(u^0)^2+P_0|u|^2]c_{n+1}\Big)\nonumber\\
&+\Big[\Big(n_0u a_{n+1}+(e_0+P_0)u (u\cdot b_{n+1})+(e_0+P_0)u^0 u c_{n+1}\Big)\times B_0\Big]_j\nonumber\\
&+n_0u^0E_{n+1,j}+\Big(n_0u^0\times B_{n+1}\Big)_j\nonumber\\
&+\sum_{\substack{k+l=n+1\\k,l\geq1}}E_{k,j}\Big(n_0u^0a_{l}+(e_0+P_0)u^0(u\cdot b_{l})+[e_0(u^0)^2+P_0|u|^2]c_{l}\Big)\nonumber\\
&+\sum_{\substack{k+l=n+1\\k,l\geq1}}\Big[\Big(n_0u a_{l}+(e_0+P_0)u (u\cdot b_{l})+(e_0+P_0)u^0 u c_{l}\Big)\times B_k\Big]_j\nonumber\\
&+\nabla_x\cdot\int_{\mathbb R^3}  \frac{p_jp}{p^0}\sqrt{\mathbf{M}}{\{\bf I-P\}}\Big(\frac{F_{n+1}}{\sqrt{\mathbf{M}}}\Big)\,dp\nonumber\\
&+\Big[\int_{\mathbb R^3}  \hat{p}\times B_0\sqrt{\mathbf{M}}{\{\bf I-P\}}\Big(\frac{F_{n+1}}{\sqrt{\mathbf{M}}}\Big)\,dp\Big]_j\nonumber\\
&+\sum_{\substack{k+l=n+1\\k,l\geq1}}\Big[\int_{\mathbb R^3}  \hat{p}\times B_k\sqrt{\mathbf{M}}{\{\bf I-P\}}\Big(\frac{F_{l}}{\sqrt{\mathbf{M}}}\Big)\,dp\Big]_j=0,\nonumber
\end{align}
and
\begin{align}
& n_0u^0\Big(hu^0 \partial_ta_{n+1}+\frac{1}{\gamma K_2(\gamma)}[(6K_3(\gamma)+\gamma K_2(\gamma)) u^0](u\cdot \partial_tb_{n+1})\nonumber\\
&+\frac{1}{\gamma K_2(\gamma)}[(3K_3(\gamma)+\gamma K_2(\gamma))(u^0)^2 +3K_3(\gamma)|u|^2]\partial_tc_{n+1}\Big)\nonumber\\
&+ n_0u^0\Big[\partial_t(hu^0)a_{n+1}+\partial_t\Big(\frac{1}{\gamma K_2(\gamma)}[(6K_3(\gamma)+\gamma K_2(\gamma)) u^0]u\Big)\cdot b_{n+1}\nonumber\\
&+\partial_t\Big(\frac{1}{\gamma K_2(\gamma)}[(3K_3(\gamma)+\gamma K_2(\gamma))(u^0)^2 +3K_3(\gamma)|u|^2]\Big)c_{n+1}\Big]\nonumber\\
&-P_0\partial_ta_{n+1}-\frac{n_0K_3(\gamma)}{\gamma K_2(\gamma)}(u\cdot \partial_tb_{n+1})\nonumber\\
&-\partial_tP_0a_{n+1}-\partial_t\Big(\frac{n_0K_3(\gamma)}{\gamma K_2(\gamma)}u\Big)\cdot b_{n+1}\nonumber\\
&+n_0u\cdot\Big(hu^0 \nabla_x a_{n+1}+\frac{1}{\gamma K_2(\gamma)}(6K_3(\gamma)+\gamma K_2(\gamma))u^0( \nabla_xb_{n+1}\cdot u)\label{energy02}\\
&+\frac{1}{\gamma K_2(\gamma)}[(3K_3(\gamma)+\gamma K_2(\gamma))(u^0)^2+3K_3(\gamma)|u|^2]\nabla_xc_{n+1} \Big)\nonumber\\
&+n_0u\cdot\Big[\nabla_x\Big(hu^0\Big)  a_{n+1}+\nabla_x\Big(\frac{1}{\gamma K_2(\gamma)}(6K_3(\gamma)+\gamma K_2(\gamma))u^0u\Big)\cdot b_{n+1}\nonumber\\
&+\nabla_x\Big(\frac{1}{\gamma K_2(\gamma)}[(3K_3(\gamma)+\gamma K_2(\gamma))(u^0)^2+3K_3(\gamma)|u|^2]\Big)c_{n+1} \Big]\nonumber\\
&+\Big(\frac{n_0K_3(\gamma)}{\gamma K_2(\gamma)}u^0\Big)\nabla_x\cdot b_{n+1}+\Big(\frac{2n_0K_3(\gamma)}{\gamma K_2(\gamma)}u\Big)\cdot\nabla_x c_{n+1}\nonumber\\
&+\nabla_x\Big(\frac{n_0K_3(\gamma)}{\gamma K_2(\gamma)}u^0\Big)\cdot b_{n+1}+\nabla_x\cdot\Big(\frac{2n_0K_3(\gamma)}{\gamma K_2(\gamma)}u\Big)c_{n+1}+n_0u\cdot E_{n+1}\nonumber\\
&+ n_0u\cdot E_0 a_{n+1}+(e_0+P_0)(u\cdot b_{n+1})(u\cdot E_0)+P_0E_0\cdot b_{n+1}\nonumber\\
&+(e_0+P_0)u^0(u\cdot E_0)c_{n+1}+\int_{\mathbb R^3}  \hat{p}\sqrt{\mathbf{M}}{\{\bf I-P\}}\Big(\frac{F_{n+1}}{\sqrt{\mathbf{M}}}\Big)\,dp \cdot E_0\nonumber\\
&+\sum_{\substack{k+l=n+1\\k,l\geq1}}\Big[n_0u\cdot E_k a_{l}+(e_0+P_0)(u\cdot b_{l})(u\cdot E_k)+P_0E_k\cdot b_{l}\nonumber\\
&+(e_0+P_0)u^0(u\cdot E_k)c_{l}+\int_{\mathbb R^3}  \hat{p}\sqrt{\mathbf{M}}{\{\bf I-P\}}\Big(\frac{F_{l}}{\sqrt{\mathbf{M}}}\Big)\,dp \cdot E_k\Big]=0.\nonumber
\end{align}
Now we can write the equations \eqref{number02}, \eqref{moment02}, \eqref{energy02} as a linear symmetric hyperbolic system:
\begin{equation}\label{lsh}
{\bf A}_0\partial_t U+\sum_{i=1}^3{\bf A}_i\partial_i U +{\bf B}_1 U+{\bf B}_2\bar{U}=S,
\end{equation}
where $U$ and $\bar{U}$ are
$$U=\left(
\begin{array}{c}
\displaystyle a_{n+1} \\
\displaystyle b_{n+1}\\
\displaystyle c_{n+1}
\end{array}
\right),\qquad \bar{U}=\left(
\begin{array}{c}
\displaystyle E_{n+1} \\
\displaystyle B_{n+1}
\end{array}
\right).$$
For simplicity, denote
$$h_1=h_1(t,x)\equiv\frac{n_0}{\gamma K_2(\gamma)} (6K_3(\gamma)+\gamma K_2(\gamma)), \quad h_2=h_2(t,x)\equiv\frac{n_0K_3(\gamma)}{\gamma K_2(\gamma)} .$$
$5\times5$ Matrixes ${\bf A}_0$, ${\bf A}_i, (i=1,2,3)$ in \eqref{lsh} are
\begin{equation*}
{\bf A}_0=\left(
\begin{array}{ccc}
\displaystyle n_0u^0 & n_0u^0hu^t & [e_0(u^0)^2+P_0|u|^2]\vspace{0.5ex}\\
\displaystyle n_0u^0h u & (h_1u\otimes u+h_2{\bf I})u^0 & (h_1(u^0)^2-h_2)u\vspace{0.5ex}\\
\displaystyle [e_0(u^0)^2+P_0|u|^2] & (h_1(u^0)^2-h_2 )u^t & (h_1(u^0)^2-3h_2 )u^0
\end{array}
\right),
\end{equation*}
and
\begin{equation*}
{\bf A}_i=\left(
\begin{array}{ccc}
\displaystyle n_0u_i & n_0hu_iu^t+P_0 e_i^t & n_0hu^0u_i\vspace{0.5ex}\\
\displaystyle n_0hu_iu+P_0 e_i & h_1u_iu\otimes u+h_2(u_i {\bf I}+\tilde{\bf A}_i) & (h_1u_iu+h_2 e_i)u^0\vspace{0.5ex}\\
\displaystyle n_0hu^0u_i & (h_1u_iu^t+h_2 e_i^t)u^0 & (h_1(u^0)^2-h_2 )u_i
\end{array}
\right),
\end{equation*}
where $(\cdot)^t$ denotes the transpose of a vector in ${\mathbb R^3}$, ${\bf I}$ is the $3\times3$ identity matrix and $\tilde{\bf A}_i$ is a $3\times3$ matrix with its components
$$(\tilde{\bf A}_i)_{jk}=\delta_{ij}u_k+\delta_{ik}u_j,\qquad 1\leq j,k\leq 3.$$
The components of matrixes ${\bf B}_1$ and ${\bf B}_2$, and the remainder terms $S$ can be written explicitly as  functions of $F_i, E_i, B_i,~ (0\leq i\leq n$) and their first order derivatives.
For the matrix ${\bf A}_0$, its determinant is
\begin{align*}
&n_0^5\left|
\begin{array}{ccc}
\displaystyle u_0 & u^0u^th & h(u^0)^2-\frac{P_0}{n_0}\vspace{0.5ex}\\
\displaystyle {\bf0} & u^0[h_3u\otimes u+h_4{\bf I}] & (u^0)^2h_3u\vspace{0.5ex}\\
\displaystyle 0 & (u^0)^2h_3u^t &(u^0)^3h_3-u^0h_4+\frac{1}{\gamma^2u^0}
\end{array}
\right|\\
=&n_0^5(u^0)^6\left|
\begin{array}{cc}
\displaystyle  h_3u\otimes u+h_4{\bf I} & h_3u\vspace{0.5ex}\\
\displaystyle  h_3u^0u^t & u^0h_3-\frac{1}{u^0}h_4+\frac{1}{\gamma^2(u^0)^3}
\end{array}
\right|\vspace{0.5ex}\\
=&n_0^5(u^0)^6\left|
\begin{array}{cc}
\displaystyle h_4{\bf I} &  h_3u\vspace{0.5ex}\\
\displaystyle  \frac{u^t}{ u^0}\Big[h_4-\frac{1}{\gamma^2(u^0)^2}\Big]& u^0h_3-\frac{1}{u^0}h_4+\frac{1}{\gamma^2(u^0)^3}
\end{array}
\right|\vspace{0.5ex}\\
=&n_0^5(u^0)^6\left|
\begin{array}{cc}
\displaystyle h_4{\bf I} &  h_3u\vspace{0.5ex}\\
\displaystyle  {\bf 0}^t& \Big[\frac{-|u|^2}{ u^0h_4}\Big(h_4-\frac{1}{\gamma^2(u^0)^2}\Big)+u^0\Big]h_3-\frac{1}{u^0}h_4+\frac{1}{\gamma^2(u^0)^3}
\end{array}
\right|\vspace{0.5ex}\\
=&n_0^5(u^0)^6h_4^3\Big\{\Big[\frac{-|u|^2}{ u^0h_4}\Big(h_4-\frac{1}{\gamma^2(u^0)^2}\Big)+u^0\Big]h_3-\frac{1}{u^0}h_4+\frac{c^4}{\gamma^2(u^0)^3}\Big\},
\end{align*}
where ${\bf 0}$ denotes the column  vector $(0,0,0)^t$, $h_3$ and $h_4$ are functions with the following forms:
$$h_3=-\Big(\frac{K_1(\gamma)}{K_2(\gamma)}\Big)^2-\frac{2}{\gamma}\frac{K_1(\gamma)}{K_2(\gamma)}+1+\frac{8}{\gamma^2},\quad h_4=\frac{K_1(\gamma)}{\gamma K_2(\gamma)}+\frac{4}{\gamma^2}.$$
Noting
$$-\Big(\frac{K_1(\gamma)}{K_2(\gamma)}\Big)^2-\frac{3}{\gamma}\frac{K_1(\gamma)}{K_2(\gamma)}
           +1+\frac{3}{\gamma^2}>0$$
by Proposition $6.3$ in Appendix 3 \cite{Ruggeri-Xiao-Zhao-2019},
we can further obtain
\begin{align*}
|{\bf A}_0|>&n_0^5(u^0)^6h_4^3\frac{1}{u^0}(h_3-h_4)\\
           =&n_0^5(u^0)^5h_4^3\Big[-\Big(\frac{K_1(\gamma)}{K_2(\gamma)}\Big)^2-\frac{3}{\gamma}\frac{K_1(\gamma)}{K_2(\gamma)}
           +1+\frac{4}{\gamma^2}\Big]\\
           >&\frac{n_0^5(u^0)^5h_4^3}{\gamma^2}.
\end{align*}

On the other hand, the system \eqref{EM} can also be written as a linear symmetric hyperbolic system of $(E_{n+1}, B_{n+1})$:
\begin{equation}\label{emn1}
\partial_t \bar{U}+\sum_{i=1}^3{\bf \bar{A}}_i\partial_i \bar{U} +{\bf B}U=0,
\end{equation}
where ${\bf B}$ is a $6\times 5$ matrix whose components are functions of $n_0, u$. Denote ${\bf O}$ as a $3\times 3$ matrix with all components be $0$, and define matrixes:
\begin{align*}
{\bf \bar{A}}_{11}=\left(
\begin{array}{ccc}
\displaystyle 0 & 0 & 0\\
\displaystyle 0 & 0 & 1\\
\displaystyle 0 & -1 & 0
\end{array}
\right), \quad {\bf \bar{A}}_{12}=\left(
\begin{array}{ccc}
\displaystyle 0 & 0 & 0\\
\displaystyle 0 & 0 & -1\\
\displaystyle 0 & 1 & 0
\end{array}
\right),\quad
 {\bf \bar{A}}_{21}=\left(
\begin{array}{ccc}
\displaystyle 0 & 0 & -1\\
\displaystyle 0 & 0 & 0\\
\displaystyle 1 & 0 & 0
\end{array}
\right), \\
 {\bf \bar{A}_{22}}=\left(
\begin{array}{ccc}
\displaystyle 0 & 0 & 1\\
\displaystyle 0 & 0 & 0\\
\displaystyle -1 & 0 & 0
\end{array}
\right),\quad {\bf \bar{A}}_{31}=\left(
\begin{array}{ccc}
\displaystyle 0 & 1 & 0\\
\displaystyle -1 & 0 & 0\\
\displaystyle 0 & 0 & 0
\end{array}
\right),\quad{\bf \bar{A}}_{32}=\left(
\begin{array}{ccc}
\displaystyle 0 & -1 & 0\\
\displaystyle 1 & 0 & 0\\
\displaystyle 0 & 0 & 0
\end{array}
\right).
\end{align*}
Then ${\bf \bar{A}}_i$ in \eqref{emn1} can be expressed as
 $${\bf \bar{A}}_i=\left(\begin{array}{cc}
\displaystyle {\bf O} &  {\bf \bar{A}}_{i1}\\
\displaystyle   {\bf \bar{A}}_{i2} & {\bf O}
\end{array}\right).$$
 Combining \eqref{lsh} and \eqref{emn1}, we can obtain the wellposedness of $(a_{n+1}, b_{n+1}, c_{n+1}, E_{n+1}$, $B_{n+1}) \in C^0([0,\infty);H^N) $ with initial data
$a_{n+1}(0,x), b_{n+1}(0,x), c_{n+1}(0,x), E_{n+1}(0,x), B_{n+1}(0,x)$ $\in H^N, N\geq 0$ by the standard theorem of  linear symmetric system \cite{Barhouri-Chemin-Dachin-2011} (Chapter $4.2$). Moreover, for $n=0$, one has
$$\|\partial_t{\bf A}_0\|_{\infty}+\sum_{i=1}^{3}\|\nabla_x{\bf A}_i\|_{\infty}+\sum_{i=1}^{2}\|{\bf B}_i\|_{\infty}+\|S\|_{\infty}\lesssim (1+t)^{-\beta_0},\qquad \|S\|_{H^N}\lesssim 1.$$
Therefore, standard energy estimates on systems \eqref{lsh} and \eqref{emn1} yield
\begin{align*}
&\frac{d}{dt}\left(\|(a_1(t),b_1(t),c_1(t))\|_{H^N}^2+\|[E_1(t),B_1(t)]\|_{H^N}^2\right)\\
&\hspace{0.5cm}\lesssim(1+t)^{-\beta_0}\left(\|(a_1(t),b_1(t),c_1(t))\|_{H^N}^2+\|[E_1(t),B_1(t)]\|_{H^N}^2\right)\\
&\hspace{0.5cm}
+\left(\|(a_1(t),b_1(t),c_1(t))\|_{H^N}+\|[E_1(t),B_1(t)]\|_{H^N}\right).
\end{align*}
Namely, \begin{equation}\label{abc1}
\|(a_1(t),b_1(t),c_1(t))\|_{H^N}+\|[E_1(t),B_1(t)]\|_{H^N}\lesssim 1+t
\end{equation}
 by Gr\"{o}nwall's inequality.

 On the other hand, based on the second equation in \eqref{expan2}, we can modify the corresponding arguments in \cite[Lemma A.2]{Guo-CPAM-2006} to obtain
$${\{\bf I-P\}}\Big(\frac{F_{1}}{\sqrt{\mathbf{M}}}\Big)\lesssim \mathbf{M}^{1_-}.$$
It is straightforward to obtain \eqref{growth0} by the structure of $F_1$ and \eqref{abc1}.
Assuming \eqref{growth0} holds for $1\leq i\leq n$, the case $i=n+1$ for \eqref{growth0} holds again by the structure of the equation for $F_{n+1}$ in \eqref{expan2},  the induction assumption and
 similar analysis as the case $i=1$ ( $n=0$).

\end{proof}

\vspace{2cm}

\end{document}